\numberwithin{equation}{section}
\newcommand{\dom}{\operatorname{dom}}
\newcommand{\caH}{\mathcal{H}}
\newcommand{\sgn}{\operatorname{sgn}}
\newcommand{\Ree}{\mathrm{Re}}
\newcommand{\seq}{ \mathrm{Seq}_n(\mathbb{R})}
\theoremstyle{theorem}
\newtheorem{Thm}{Theorem}[section]
\newtheorem{Prop}[Thm]{Proposition}
\newtheorem{Lem}[Thm]{Lemma}
\theoremstyle{definition}
\newtheorem{Defff}[Thm]{Definition}
\newtheorem{ex}[Thm]{Example}
\newtheorem{Rem}[Thm]{Remark}
\newcommand{\lam}{\lambda}
\newcommand{\Sp}{{Sp(n,\mathbb{R})}}
\newcommand{\mb}{\mathbf}
\newcommand{\al}{\alpha}
\newcommand{\ep}{\varepsilon}
\newcommand{\eq}{\begin{equation}}
\newcommand{\en}{\end{equation}}
\newcommand{\beqna}[1]{\begin{eqnarray}\label{#1}}
\newcommand{\eeqna}{\end{eqnarray}}
\newcommand{\beqn}[1]{\begin{equation}\label{#1}}
\newcommand{\eeqn}{\end{equation}}
\newcommand{\af}{$ \mathbf{a} $}
\newcommand{\mc}[1]{\mathcal{#1}}
\newcommand{\msc}[1]{\mathscr{#1}}
\newcommand{\mf}[1]{\mathfrak{#1}}
\renewcommand{\subset}{\subseteq}
\newcommand{\rar}{\rightarrow}
\newcommand{\bil}[2]{\langle{#1},{#2}^{\vee} \rangle }
\newcommand{\hs}{ \mathfrak{h}^*}
\newcommand{\aff}{ \mathbf{a} }
\newcommand{\cff}{ \mathbf{c} }
\newcommand{\eff}{ \mathbf{e} }
\newcommand{\gkd}{\operatorname{GKdim}}
\newcommand{\hdom}[3]{\draw (0+#1,0-#2) rectangle (2+#1,-1-#2)++(-1,+0.5) node {$ #3$};}
\newcommand{\vdom}[3]{\draw (0+#1,0-#2) rectangle (1+#1,-2-#2)++(-0.5,+1) node {$ #3$};}
\newcommand{\hobox}[3]{\draw (0+#1,0-#2) rectangle (1+#1,-1-#2)++(-0.5,+0.5) node {$ #3$};}
\newcommand{\domscale}{0.51}
\newcommand{\sh}{\operatorname{sh}}
\newcommand{\ev}{^{\mathrm{ev}}}
\newcommand{\od}{^{\mathrm{odd}}}
\begin{document}

\bibliographystyle{alpha}
\title[GK dimensions and associated varieties]{Gelfand-Kirillov dimensions and associated varieties of highest weight modules
}
\author{Zhanqiang Bai}\author{Wei Xiao}\author{Xun Xie}
\address[Bai]{School of Mathematical Sciences, Soochow University, Suzhou 215006, P. R. China}
\email{zqbai@suda.edu.cn}

\address[Xiao]{College of Mathematics and statistics, Shenzhen Key Laboratory of Advanced Machine Learning and Applications, Shenzhen University,
	Shenzhen 518060, Guangdong, P. R. China}
\email{xiaow@szu.edu.cn}

\address[Xie]{School of Mathematics and Statistics, Beijing Institute of Technology, Beijing 100081, China}
\email{xieg7@163.com}
\subjclass[2010]{Primary 22E47; Secondary 17B10, 20C08}
\date{\today}
\keywords{Lusztig's $ \aff $-function,  Gelfand-Kirillov dimension, associated variety, domino insertion, highest weight Harish-Chandra module } 

\maketitle

\begin{abstract}
In this paper, we present a uniform formula of Lusztig's  $ \mathbf{a}$-functions on classical Weyl groups. Then we obtain  an efficient algorithm for the Gelfand-Kirillov dimensions of simple highest weight modules of classical Lie algebras, whose highest weight is not necessarily regular or integral. To deal with type $ D $, we prove an interesting property about domino tableaux associated with Weyl group elements by  introducing an invariant, called the hollow tableau. As an application, the associated varieties of all the simple highest weight Harish-Chandra modules are explicitly determined, including the exceptional cases.
\end{abstract}
%\setcounter{tocdepth}{1}
%\tableofcontents
\section{Introduction}

\subsection{GK dimensions and associated varieties}

Let $\mathfrak{g}$ be a complex semisimple Lie algebra and $U(\mathfrak{g})$ be its enveloping algebra. The Gelfand-Kirillov (GK) dimension is an essential invariant to measure the size of an infinite-dimensional $U(\mathfrak{g})$-module $M$ \cite{Ge-Ki}. The associated variety of $M$, whose dimension is equal to the GK dimension of $M$, is another invariant to distinguish $U(\mathfrak{g})$-modules \cite{Be}. These two invariants are closely related and play significant roles in representation theory, see for example \cite{J80-1,J80-2,J81-1, Tr-01,LM}. 

The computations of these two invariants are generally complicated. The case of simple highest weight modules for complex simple Lie algebras has attracted considerable amount of attention (e.g., \cite{Jo84,BoB3,Ta,Mel,Mc00,Wi}), as well as the case of Harish-Chandra (HC) modules for real reductive groups (e.g., \cite{Vo78, Vo91, Garfinkle1993, Vo19}). The first main result of this paper is an efficient algorithm for GK dimensions of highest weight modules for classical Lie algebras. A formula of  GK dimensions of regular integral highest weight modules for type $A$ was obtained by Joseph \cite{Jo78}. Based on the theory of Lusztig's $\aff$-function  \cite{lusztig1977symbol, Lu79,Lusztig1982A-class-II, lusztig1984char,lusztig1985cellsI,lusztig1985A_n}, Bai-Xie were able to extend Joseph's result to singular and nonintegral cases for type $A$ \cite{BX}. More efforts are needed for the other classical types. In this paper, formulas of GK dimensions for simple highest weight modules are explicitly presented in the most generality (Theorem \ref{thm2} and \ref{thm:algo}). As a critical step, we give a uniform formula of $\aff$-functions (Theorem \ref{thm1}) on classical Weyl groups.

The second main result of this paper is a full description of the associated varieties of all the highest weight HC modules. Such a result was expected for a long time. In 1992, Joseph \cite{Jo92} found a recursive relation for associated varieties of unitary highest weight modules in a case-by-case fashion. After that, the GK dimensions of holomorphic representations were obtained for Hermitian symmetric groups in the dual pair settings by Tan-Zhu \cite{tan-zhu}. Nishiyama-Ochiai-Taniguchi  \cite{NOT} and Enright-Willenbring \cite{EW} independently gave characterizations of the GK dimensions and associated varieties for unitary highest weight HC modules appearing in the dual pair settings. In recent years, a uniform formula for the GK dimensions and associated varieties of unitary highest weight HC modules was discovered by Bai-Hunziker \cite{BH}. Almost at the same time, Barchini-Zierau \cite{BZ} and Zierau \cite{Zie18} computed the characteristic cycles of highest weight HC modules with regular integral infinitesimal characters, which can give some information of the corresponding associated varieties. In this paper, we determine associated varieties of the highest weight HC modules  for all the Hermitian types (Theorems 6.2 and 7.1), where the highest weights are not necessarily regular or integral and the modules are not necessarily unitary.  Note that the case of type $ A $ was already known in \cite{BX}.

\subsection{Lusztig's $ \aff $-functions}\label{subsec:intr2} 

There is a beautiful formula by Lusztig \cite{lusztig1985A_n} connecting the GK dimensions and the $ \aff $-functions. Using this formula, one can translate the computation of GK dimensions of simple highest weight modules to the computation of Lusztig's $ \aff $-functions on  the integral Weyl subgroups \cite{BX}. Thus, for our purposes we need to find a fine formula of Lusztig's  $ \aff $-functions first.

Fix a Cartan subalgebra $\mathfrak{h}$ of $\mathfrak{g}$.
Let $\Phi$ be the root system of $(\mathfrak{g}, \mathfrak{h})$ and $W$ be its Weyl group. The function $\aff: W\rightarrow\mathbb{N}$ first appeared in Lusztig's works on representations of finite groups of Lie type and Weyl groups  \cite{lusztig1977symbol, Lu79,Lusztig1982A-class-II, lusztig1984char}. It is originally defined as a function on the set of irreducible representations of Weyl groups. In  the study of cells of affine Weyl groups, Lusztig \cite{lusztig1985cellsI,lusztig1985A_n} introduced the $ \aff $-functions on Weyl groups defined in terms of Kazhdan-Lusztig basis of Hecke algebras. He found many interesting properties about $ \aff $-functions and cells. In fact, the $ \aff $-functions on the  set of simple modules and on the Weyl group are compatible \cite[Prop. 20.6]{lusztig2003hecke}.

It is well known that there is an algorithm of the $ \aff $-functions for Weyl groups of type $ A $, using Robinson-Schensted insertion. This gives rise to a combinatorial  algorithm of GK dimensions of simple highest weight modules of $\mathfrak{sl}(n,\mathbb{C})$ \cite[Thm. 4.6]{BX}. From this, we can determine the GK dimensions and the associated varieties of  highest weight HC modules for type $A$ (see \cite{BX} for more details). This paper is a sequel of \cite{BX}, and we will give an answer for other classical types.

Note that the Weyl groups of type $B$ and $C$ are the same. For Weyl groups of types $ B $ and $ D $, Lusztig gave a formula for the $ \aff $-functions on the set of irreducible representations of the Weyl groups \cite{lusztig1977symbol, Lu79,lusztig1984char}. Since the irreducible representations are parametrized by Lusztig's symbols \cite{lusztig1977symbol}, we need  a certain combinatorial correspondence between  the elements of Weyl groups and the irreducible representations. This was realized in the work \cite{Barbasch-vogan1982classical} by Barbasch and Vogan. In other words, they gave a way of associating an element of the Weyl group to a symbol. Combining these results together, we get combinatorial algorithms for computing $ \aff $-functions on the Weyl groups of types $ B $ and $ D $ (Propositions \ref{prop:afB} and \ref{prop:afD}). 
%These algorithms are known to experts, but we include a proof for convenience.
It is quite surprising that we can simplify these algorithms and get a uniform and efficient formula of $ \aff $-functions on classical Weyl groups (Theorem \ref{thm1}).

\subsection{A uniform formula}

Now we describe our formula in detail. Fix a positive integer $N$. We represent a \textit{partition} of $N$ as a decreasing sequence $p=(p_1, p_2, \cdots, p_N)$ of nonnegative integers. For simplicity, we often omit the trailing zeros in $p$. Each partition of $N$ corresponds a \textit{Young diagram}, which is an array of $N$ boxes having left-justified rows with the $i$-th row containing $p_i$ boxes for $1\leq i\leq N$. We will not distinguish a partition and its Young diagram. A \textit{Young tableau} $Y$ of shape $p$ is obtained by filling each box of the Young diagram $p$ with a number (which could be integral, real or complex, as required in context). In this case, we write $p=\sh(Y)$.

For a Young diagram $p$, use $ (k,l) $ to denote the box in the $ k $-th row and the $ l $-th column.
We say the box $ (k,l) $ is \textit{even} (resp. \textit{odd}) if $ k+l $ is even (resp. odd). Let $ p_i \ev$ (resp. $ p_i\od $) be the number of even (resp. odd) boxes in the $ i $-th row of the Young diagram $ p $. 
One can easily check that \begin{equation}\label{eq:ev-od}
	p_i\ev=\begin{cases}
		\left\lceil \frac{p_i}{2} \right\rceil&\text{ if } i \text{ is odd},\\
		\left\lfloor \frac{p_i}{2} \right\rfloor&\text{ if } i \text{ is even},
	\end{cases}
	\quad p_i\od=\begin{cases}
		\left\lfloor \frac{p_i}{2} \right\rfloor&\text{ if } i \text{ is odd},\\
		\left\lceil \frac{p_i}{2} \right\rceil&\text{ if } i \text{ is even}.
	\end{cases}
\end{equation}
Here for $ a\in \mathbb{R} $, $ \lfloor a \rfloor $ is the largest integer $ n $ such that $ n\leq a $, and $ \lceil a \rceil$ is the smallest integer $n$ such that $ n\geq a $. For convenience, we set
\begin{equation*}
p\ev=(p_1\ev,p_2\ev,\cdots)\quad\mbox{and}\quad p\od=(p_1\od,p_2\od,\cdots).
\end{equation*}

%Note that $  \lfloor \frac{k+1}{2} \rfloor = \lceil \frac{k}{2} \rceil$ for any $k\in \mathbb{Z}$. 

\begin{ex}
	Let $ p=(5,5,4,3,3) $. The even boxes in $p$ are marked as follows.
	\[
	\young(\times~\times~\times,~\times~\times~,\times~\times~,~\times~,\times~\times)
	\]
	Then $ p\ev=(3,2,2,1,2)$ and $ p\od=(2,3,2,2,1) $.
\end{ex}

For  a totally ordered set $ \Gamma $, we  denote by $ \mathrm{Seq}_n (\Gamma)$ the set of sequences $ x=(x_1,x_2,\cdots, x_n) $   of length $ n $ with $ x_i\in\Gamma $.
For the aim of this paper, it is enough to take $ \Gamma $ to be $ \mathbb{Z} $ or a coset of $ \mathbb{Z} $ in $ \mathbb{C} $. 

\begin{Defff}\label{def:FY}
	Applying the Robinson-Schensted algorithm to $ x\in \mathrm{Seq}_n (\Gamma)$, we  get a Young tableau $ Y(x) $. Denote $ p(x)=\sh(Y(x)) $.
\end{Defff}

%A convenient reference for the Robinson-Schensted algorithm is \cite{Sagan}. Also see \cite{BX} when equal entries or non-integral entries occurred. 

For $ x=(x_1,x_2,\cdots,x_n)\in \seq $, set
\begin{equation*}
	\begin{aligned}
		{x}^-=&(x_1,x_2,\cdots,x_{n-1}, x_n,-x_n,-x_{n-1},\cdots,-x_2,-x_1),\\
		{}^-{x}=&(-x_n,-x_{n-1},\cdots, -x_2,-x_1,x_1,x_2,\cdots, x_{n-1}, x_n).
	\end{aligned}
\end{equation*}
Let $ W $ be a classical Weyl group. For $ w\in W $, denote by $\vv{w}$ the sequence $(w(1), w(2), \cdots, w(n))$.
%, where $w=w(1)w(2)\cdots w(n)\in W$ in the one-line notation. 

\begin{Thm}\label{thm1}
	For an element $w$ of a classical Weyl group $ W$,
	\begin{equation*}\label{eq:afunc}
		\aff(w)=\begin{cases}
			\sum_{i\geq1}(i-1)p(\vv w)_i &\text{ if }\Phi=A_{n-1},\\
			\sum_{i\geq1}(i-1)p({}^-\vv{w})_i\od &\text{ if }\Phi=B_n,\\
			\sum_{i\geq1}(i-1)p({}^-\vv{w})_i\ev &\text{ if }\Phi=D_n.
		\end{cases}
	\end{equation*}
\end{Thm}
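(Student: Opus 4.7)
The plan is to verify the three cases separately. Type $A_{n-1}$ is a direct restatement of Lusztig's classical result on $\aff$ for the symmetric group; types $B_n$ and $D_n$ require starting from Propositions \ref{prop:afB} and \ref{prop:afD}, which already express $\aff(w)$ in terms of the Lusztig symbol attached to $w$ via the Barbasch--Vogan correspondence, and then translating that symbol-based formula into the even/odd box counts of the shape $p({}^-\vv{w})$.

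For type $A$, Lusztig's theorem identifies $\aff(w)$ with $n(\lambda):=\sum_{i}\binom{\lambda'_i}{2}$, where $\lambda=p(\vv w)$ is the Robinson--Schensted shape of $w$ and $\lambda'$ is the conjugate partition. The elementary identity $\sum_i\binom{\lambda'_i}{2}=\sum_i(i-1)\lambda_i$ finishes this case. I would record this as a short lemma.

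For types $B$ and $D$, the heart of the argument is the claim that the two subdiagrams $p({}^-\vv{w})\ev$ and $p({}^-\vv{w})\od$ are precisely the two rows of the Lusztig symbol of $w$, up to the type-dependent shift introduced in the Barbasch--Vogan parametrization. Granting this identification, Lusztig's symbol formula, which has the form $\sum_i (i-1)\beta_i$ applied to the appropriate row $\beta$ (the $\od$-row in type $B$ and the $\ev$-row in type $D$), specializes immediately to the two claimed expressions. To establish the matching, I would proceed by induction on $n$, comparing the effect on the Robinson--Schensted shape of inserting the symmetric pair $-w(n)$ at the front and $w(n)$ at the back of the sequence ${}^-\vv{w}$ with the effect of the corresponding elementary operation on the symbol side.

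The main obstacle is precisely this symmetric-pair comparison. Tracking the joint bumping of $\pm w(n)$ is delicate: the bumping path launched by $-w(n)$ at the left and the one launched by $w(n)$ at the right can interact inside the tableau, and the resulting reshaping must be reconciled with a domino-type growth on the symbol side while preserving the even/odd colouring of boxes. I anticipate two possible routes to avoid a brute-force case analysis. One is to invoke the known equivalence between domino insertion on signed permutations and ordinary RS insertion on doubled sequences (in the spirit of Stanton--White and Garfinkle), and then read off the symbol from the $2$-quotient of the resulting domino shape. The other is to split ${}^-\vv{w}$ into two interlaced subsequences that independently govern the even and the odd subdiagrams, reducing the assertion to two independent type-$A$ RS statements; the latter approach would preserve the uniform flavour of the final formula.
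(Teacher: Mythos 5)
Your overall strategy (take Propositions \ref{prop:afB} and \ref{prop:afD} as the starting point and convert the symbol formula into box counts of $p({}^-\vv{w})$; handle type $A$ by the identity $\sum_i\binom{q_i}{2}=\sum_i(i-1)p_i$) is the same as the paper's, and the type $A$ case is fine. But your key lemma for types $B$ and $D$ is not correct as stated: $p({}^-\vv{w})\ev$ and $p({}^-\vv{w})\od$ are \emph{not} the two rows of the symbol up to a staircase shift. The actual relation, which is already built into the definition of $Symb_B$ via \eqref{eq:symb} (Barbasch--Vogan), is a \emph{multiset} identity: the union of the two rows, encoded as $\{2\lambda_i,\,2\mu_j+1\}$, equals $\{p_k+2m+1-k\}$. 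Which row a given $k$ contributes to depends on the parity of $p_k+k$, so the rows of the symbol interleave among the rows of $p$; it is the \emph{union} of the two rows that, after removing the staircase, gives $p\od$ (type $B$) resp.\ $p\ev$ (type $D$). The paper's own example shows the failure of your claim: for $\vv w=(3,4,-1,5,2)$ one has $p({}^-\vv w)=(6,4)$, $p\od=p\ev=(3,2)$, while the $B$-symbol is $\begin{pmatrix}0&4\\ 2\end{pmatrix}$, whose rows give $(3)$ and $(2)$ separately --- neither equals $p\od$ or $p\ev$. Relatedly, $\cff_B$ and $\cff_D$ are not ``$\sum_i(i-1)\beta_i$ applied to one row'': they are sums of pairwise minima over \emph{all} entries of both rows (minus a constant), and it is exactly this symmetry that makes the row-assignment irrelevant.

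Because of this, the delicate step you flag --- an induction on $n$ tracking the joint bumping of the symmetric pair $\pm w(n)$, or a detour through domino insertion and $2$-quotients --- is both unnecessary and, as aimed at your matching claim, doomed. The paper needs no insertion analysis at all at this point: \eqref{eq:symb} is quoted from Barbasch--Vogan as the definition of $Symb_B$, and the rest is elementary arithmetic. Concretely, since $\cff_B$ depends only on the multiset of entries, set $\alpha_k=\bigl\lfloor\frac{p_k+2m+1-k}{2}\bigr\rfloor$; these are decreasing, so the double sum of minima becomes $\sum_k(k-1)\alpha_k$; by \eqref{eq:ev-od} one has $\alpha_k=p_k\od+\bigl\lfloor\frac{2m+1-k}{2}\bigr\rfloor$, and the staircase part sums exactly to the constant $\frac16 m(m-1)(4m+1)$, yielding $\aff(w)=\sum_k(k-1)p_k\od$. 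Type $D$ is the same computation after passing to the $D$-symbol (where $\lambda_1=0$ is removed and entries become $\beta_k=\lfloor\frac{p_k+2m-k}{2}\rfloor=p_k\ev+\lfloor\frac{2m-k}{2}\rfloor$), with constant $\frac16 m(m-1)(4m-5)$. To repair your write-up, replace the row-by-row identification with this multiset computation; the insertion-level facts you were trying to prove are precisely what the citation to Barbasch--Vogan (and, for the shape statement, van Leeuwen) already supplies.
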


\begin{ex}
	Let $\Phi=B_5$ and $\vv w=(3, 4, -1, 5, 2)$. We have
	\[
	Y({}^-\vv{w})=\begin{tikzpicture}[scale=\domscale+0.1,baseline=-20pt]
		\hobox{0}{0}{-5}
		\hobox{1}{0}{-4}
		\hobox{2}{0}{-3}
		\hobox{3}{0}{-1}
		\hobox{4}{0}{2}
		\hobox{5}{0}{5}
		\hobox{0}{1}{-2}
		\hobox{1}{1}{1}
		\hobox{2}{1}{3}
		\hobox{3}{1}{4}		
	\end{tikzpicture}.
\]
Hence  $p({}^-\vv{w})\od=(3,2)$ and $\aff(w)=0\cdot3+1\cdot2=2$.
\end{ex}

\subsection{GK dimensions of highest weight modules}\label{subsec:gkh}

Based on  the above result for the $ \aff $-functions, we can deduce an efficient algorithm for the GK dimensions of simple highest weight modules for classical Lie algebras.  

Let $\Phi^+\subset\Phi$ be the set of positive roots determined by a Borel subalgebra $\mathfrak{b}$ of $\mathfrak{g}$. Denote by $\Delta$ the set of simple roots in $\Phi^+$. For $\lambda\in\mathfrak{h}^*$, define the \textit{Verma module}
\begin{equation*}
M(\lambda)=U(\mathfrak{g})\otimes_{U(\mathfrak{b})}\mathbb{C}_{\lambda-\rho},
\end{equation*}
where $\mathbb{C}_{\lambda-\rho}$ is a one-dimensional $\mathfrak{b}$-module with weight $\lambda-\rho$ and $\rho$ is  half the sum of positive roots. Denote by $L(\lambda)$ the simple quotient of $M(\lambda)$.

%We will present the integral case here, while the non-integral case can be found in section 5.

\begin{Thm}\label{thm2}
	Let $\lambda=(\lambda_1, \lambda_2, \cdots, \lambda_n)\in \mathfrak{h}^*$ be an integral weight. Then 
	\begin{equation*}
		\gkd L(\lambda)=\begin{cases}
			\frac{n(n-1)}{2}-\sum_{i\geq1}(i-1)p(\lambda)_i &\text{ if }\Phi=A_{n-1},\\
			n^2-\sum_{i\geq1}(i-1)p(\lambda^-)_i\od &\text{ if }\Phi=B_n/C_n,\\
			n^2-n-\sum_{i\geq1}(i-1)p(\lambda^-)_i\ev &\text{ if }\Phi=D_n.
		\end{cases}
	\end{equation*}
\end{Thm}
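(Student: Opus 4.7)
The plan is to deduce Theorem \ref{thm2} from Theorem \ref{thm1} via Lusztig's formula identifying the GK dimension of a simple highest weight module with the number of positive roots minus the $\aff$-function of an attached Weyl group element. For integral $\lambda$, let $\mu$ be antidominant in $W\cdot\lambda$, let $W_\mu$ be its stabilizer, and let $w_\lambda$ be the longest representative of the coset $\{w\in W : w\cdot\mu=\lambda\}$. Then
\begin{equation*}
\gkd L(\lambda) = |\Phi^+| - \aff(w_\lambda),
\end{equation*}
with $|\Phi^+|$ equal to $\binom{n}{2}$, $n^2$, and $n(n-1)$ in types $A_{n-1}$, $B_n/C_n$, $D_n$, matching the leading constants in the theorem. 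Thus the task reduces to rewriting the tableau invariants of $\vv{w_\lambda}$ appearing in Theorem \ref{thm1} directly in terms of the coordinates of $\lambda$.

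For type $A$ this step is essentially \cite[Thm.~4.6]{BX}: the Robinson--Schensted shape depends only on the order pattern of a sequence, and using $p(\vv w)=p(\vv{w^{-1}})$ together with $\aff(w)=\aff(w^{-1})$, one checks that the window $\vv{w_\lambda}$ and the coordinate sequence $(\lambda_1,\ldots,\lambda_n)$ share an order pattern, so $p(\vv{w_\lambda})=p(\lambda)$ as Young diagrams. Substitution into Theorem \ref{thm1} yields the type $A$ formula.

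For types $B_n/C_n$ and $D_n$ the key step is the pair of parity-refined identities
\begin{equation*}
p({}^-\vv{w_\lambda})_i\od = p(\lambda^-)_i\od \qquad \text{and} \qquad p({}^-\vv{w_\lambda})_i\ev = p(\lambda^-)_i\ev
\end{equation*}
for every $i$. A small check (e.g.\ $w_\lambda=s_n$ acting on $\mu=(-n,-n+1,\ldots,-1)$) shows that the underlying shapes $p({}^-\vv{w_\lambda})$ and $p(\lambda^-)$ can already differ, so it is essential that only the even/odd row counts be invariant rather than the full shapes. I would prove this via the Garfinkle/Barbasch--Vogan domino insertion: both symmetric words $\lambda^-$ and ${}^-\vv{w_\lambda}$ should produce the same domino tableau, from which the sequences $p\od$ and $p\ev$ can be read off by \eqref{eq:ev-od} (each domino contributes one even and one odd box, with its placement governed by the row parity). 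The invariance of the domino tableau under the reshuffling that relates $\lambda^-$ to ${}^-\vv{w_\lambda}$ is the crucial combinatorial input.

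The main obstacle is precisely this last domino invariance, particularly in type $D$, where the \emph{hollow tableau} invariant introduced elsewhere in the paper appears to be designed to control the ambiguity between domino tableaux of two distinct but equivalent symmetric words. Once the domino/parity identity is established, substitution into Theorem \ref{thm1} closes the argument uniformly for $B/C$ and $D$.
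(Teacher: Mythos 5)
Your overall architecture (reduce to Theorem \ref{thm1} via the $\aff$-function formula, then compare the tableau data of the Weyl group element with that of $\lambda$) is the same as the paper's, but two steps are genuinely wrong or missing. First, the reduction uses the wrong coset representative: Proposition \ref{pr:main1} requires $\lambda=w\mu$ with $w\in W_{[\mu]}^J$ the \emph{shortest} element of the coset $\{w: w\mu=\lambda\}$, not the longest. With the longest representative the identity $\gkd L(\lambda)=|\Phi^+|-\aff(w_\lambda)$ already fails for $\lambda$ anti-dominant and singular: then $M(\lambda)$ is simple, so $\gkd L(\lambda)=|\Phi^+|$, while $\aff$ of the longest element of the stabilizer $W_J$ equals $\ell(w_J)>0$. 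This is not a cosmetic slip, because the subsequent ``same order pattern'' claim depends on it: it is precisely the minimal representative, equivalently the condition $\lambda\in\widehat{wC_\circ}$ (Lemma \ref{uclem2}), that breaks the ties among equal coordinates of $\lambda$ in the direction compatible with Lemma \ref{RSeq}; with the longest representative the tie-breaking goes the other way and $p(\vv{w_\lambda})\neq p(\lambda)$ in general (already for $A_1$ with $\lambda_1=\lambda_2$).

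Second, the central combinatorial comparison is both mis-stated and left unproven. For the correct minimal representative in types $B/C$ the \emph{full} shapes agree, $p(\lambda^-)=p({}^-\vv{w})$ (this is Lemma \ref{spb}, proved by comparing $\lambda$ with $\nu=w\delta$, $\delta=(-n,\dots,-1)$, via Lemma \ref{lem:wrho}, Lemma \ref{RSeq} and $P(w)=Q(w^{-1})$); your ``small check'' that the shapes already differ is incorrect — for regular $\mu$ the words $\lambda^-$ and ${}^-\vv{w^{-1}}$ have the same order pattern — and asserting \emph{both} parity identities for every $i$ while claiming the shapes differ is self-contradictory, since $p_i=p_i\ev+p_i\od$. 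The genuine subtlety occurs only in type $D$, where one gets $p(\lambda^-)=p({}^-\vv{w})$ or $p({}^-\vv{wt})$ (Lemma \ref{spd}, via embedding the $D_n$ chamber picture into $B_n$), and one must prove $p({}^-\vv{w})\ev=p({}^-\vv{wt})\ev$; that is exactly Theorem \ref{thm:hollow}, whose proof (Greene-theorem estimates as in Lemma \ref{lem1:dom1}, the intersection Lemma \ref{lem1:intersect1}, and the induction of Proposition \ref{prop:holow-induction}) you do not supply but explicitly defer as ``the main obstacle.'' Moreover, your proposed mechanism — that the symmetric words $\lambda^-$ and ${}^-\vv{w_\lambda}$ ``produce the same domino tableau'' — cannot work as stated: $\lambda^-$ may have repeated or zero entries and is not valid input for domino insertion, and if the domino tableaux did coincide then Proposition \ref{prop:shape} would force equal shapes, which is false in type $D$. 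So the hardest part of the theorem (the type $D$ case) remains unestablished in your proposal.
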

The case of type $A$ is already known (e.g., \cite{BX}). The case of types $B/C$, for which we need to consider upper closures of chambers, is a relatively straightforward consequence of Theorem \ref{thm1}. The case of type $D$, however, is much more difficult than we expected, for which we need to prove a critical property (Theorem \ref{thm:hollow}) about the domino tableaux in \cite{Garfinkle1}. To achieve this, a new invariant, the hollow tableau, is proposed in \S4. Van Leeuwen's result \cite{Leeuwen-elementary}, which is about the connection between Garfinkle's bumping description of domino tableaux and the left-right insertion description given in earlier work of Barbasch-Vogan \cite{Barbasch-vogan1982classical}, is also necessary.

%It seems that the hollow tableau can also be used in the theory of cells.

%It was showed by Leeuwen \cite{Leeuwen-elementary} that this bumping description is deeply related to the left-right insertion description given in earlier work of Barbasch-Vogan \cite{Barbasch-vogan1982classical}.

\begin{ex}
	Let $\Phi=B_8$ and $\lambda=(3, 4, 1, -2, 0, -3, 5, 6)$. Then 
	\[
	Y(\lambda^-)=\begin{tikzpicture}[scale=\domscale+0.1,baseline=-45pt]
		\hobox{0}{0}{-6}
		\hobox{1}{0}{-5}
		\hobox{2}{0}{-4}
		\hobox{3}{0}{-3}
		\hobox{0}{1}{-3}
		\hobox{1}{1}{-1}
		\hobox{2}{1}{0}
		\hobox{3}{1}{2}	
		\hobox{0}{2}{-2}
		\hobox{1}{2}{0}
		\hobox{2}{2}{5}
		\hobox{3}{2}{6}
		\hobox{0}{3}{1}
		\hobox{1}{3}{3}
		\hobox{0}{4}{3}
		\hobox{1}{4}{4}	
	\end{tikzpicture}.
	\]
	Therefore, $p(\lambda^-)\od=(2, 2, 2, 1, 1)$ and 
	\[
	\gkd L(\lambda)=8^2-(0\cdot 2+1\cdot 2+ 2\cdot 2+ 3\cdot1+4\cdot1)=51.
	\]
\end{ex}

The non-integral case which needs more effort is given in Theorem \ref{thm:algo}.
%The non-integral case needs more effort.
 A main step is to decompose of the integral part $ \Phi_{[\lam]} $ of $ \Phi $ relative to $ \lam $ into some orthogonal subsystems. It is proved that these could be of type $ A $ or $ B $ if $ \Phi=B_n $;  $ A $, $ C$ or  $D $  if $ \Phi=C_n $; $ A $ or  $D $  if $ \Phi=D_n $.

\begin{Rem}
	It is an easy consequence that the $\aff$-functions and the GK dimensions can also be calculated by  column lengths of the Young diagrams, using Lemma \ref{def:F}. This is useful when the tableaux have many rows and much fewer columns.
\end{Rem}
\subsection{Associated varieties of highest weight modules}

Now we can compute GK dimensions of the highest weight HC modules for all Hermitian type Lie groups. For the classical types, we apply the above algorithm for GK dimensions of simple highest weight modules. For the exceptional types, we will need some results in \cite{CIS}. By Vogan \cite{Vogan-81} and Yamashita \cite{Hir01}, the associated varieties of such modules form a linear chain of closures of certain orbits, and hence they are determined by their GK dimensions. Combining these results, we can explicitly describe the associated varieties for all the highest weight HC modules, see Theorems \ref{thm:associated} and  \ref{67}.

\subsection{Organization}
This paper is organized as follows. In \S\ref{sec:pre}, we prepare some necessary preliminaries. In \S \ref{sec:af}, the formulas of Lusztig's $ \aff $-functions on classical Weyl groups are described. In \S\ref{sec:dom}, we prove a key result (Theorem \ref{thm:hollow}) about the domino tableaux. Then the GK dimensions of all the simple highest weight modules for classical Lie algebras are obtained in \S\ref{sec:lie}. In \S\ref{sec:HC} and \S\ref{sec:exp}, we pin down the associated varieties of the highest weight HC modules for all Hermitian type Lie groups.

\subsection*{Acknowledgments}
We would like to thank  G. Lusztig for helpful discussions about $\mathbf{a}$-functions. We are also very grateful to D. A. Vogan for his informative comments on $\aff$-functions and related topics. The first author is supported by NSFC Grant No. 12171344 and National Key R$\&$D Program of China (No. 2018YFA0701700 and No. 2018YFA0701701), the second author is supported by NSFC Grant No. 11701381 and the third author  is supported by
NSFC Grant No. 11801031 and 12171030.

%
%%%%%%%%%%%%%%%%%%%%%%%%%%%%%%%%%%%%%%%%%%%%%%%%%%%%%%%%%%%%%%%%%%%%
%

\section{Preliminaries}\label{sec:pre}

%
%%%%%%%%%%%%%%%%%%%%%%%%%%%%%%%%%%%%%%%%%%%%%%%%%%%%%%%%%%%%%%%%%%%%
%

In this section, we will give some brief preliminaries on GK dimensions, associated varieties and $\aff$-functions. See \cite{Vo78, Vo91, lusztig1984char} for more details.

\subsection{GK dimensions and associated varieties}

Let $M$ be a finite generated $U(\mathfrak{g})$-module. Fix a finite dimensional generating space $M_0$ of $M$. Let $U_{n}(\mathfrak{g})$ be the standard filtration of $U(\mathfrak{g})$. Set $M_n=U_n(\mathfrak{g})\cdot M_0$ and
\(
\text{gr} (M)=\bigoplus\limits_{n=0}^{\infty} \text{gr}_n M,
\)
where $\text{gr}_n M=M_n/{M_{n-1}}$. Thus $\text{gr}(M)$ is a graded module of $\text{gr}(U(\mathfrak{g}))\simeq S(\mathfrak{g})$.
% The GK dimension of $M$ is the following invariant.
% \cite[Thm. 1.1]{Vo78}.

\begin{Defff} The \textit{Gelfand-Kirillov dimension} of $M$  is defined by
\begin{equation*}
	\operatorname{GKdim} M = \overline{\lim\limits_{n\rightarrow \infty}}\frac{\log\dim( U_n(\mathfrak{g})M_{0} )}{\log n}.
\end{equation*}
\end{Defff}

%The definition of GK dimension is independent of  the choice of $M_0$.

\begin{Defff}
	The  \textit{associated variety} of $M$ is defined by
\begin{equation*}
	V(M):=\{X\in \mathfrak{g}^* \mid f(X)=0 \text{ for all~} f\in \operatorname{Ann}_{S(\mathfrak{g})}(\operatorname{gr} M)\}.
\end{equation*}
\end{Defff}

The above two definitions are independent of the choice of $M_0$, and $\dim V(M)=\gkd M$ (e.g., \cite{NOT}). If $M_0$ is $\mathfrak{a}$-invariant for a subalgebra $\mathfrak{a}\subset\mathfrak{g}$, then 
%$ V(M)\subset (\mathfrak{g}/\mathfrak{a})^* $. 
%(see \cite[(1.5)(b)]{Vo78})
\begin{equation}\label{embed}
	V(M)\subset (\mathfrak{g}/\mathfrak{a})^*.
\end{equation}
\subsection{Associated varieties of Harish-Chandra modules} Let $G$ be a Lie group of  Hermitian type  with a maximal compact subgroup $K$. Denote by $\mathfrak{g}$ and $\mathfrak{k}$ their complexified Lie algebras respectively. Let $\mathfrak{g}=\mathfrak{k}\oplus\mathfrak{p}^+\oplus \mathfrak{p}^-$ be the usual decomposition of $\mathfrak{g}$ as  $\mathfrak{k}$-modules. The closures of  $K_{\mathbb{C}}$-orbits in $ \mathfrak{p}^+ $ form a linear chain of varieties \cite[Prop. 3.1]{Hir01},
\begin{equation*} 
	\{0\}={\overline{\mathcal{O}}}_0\subset \overline{\mathcal{O}}_1\subset ...\subset\overline{\mathcal{O}}_{r-1}\subset \overline{\mathcal{O}}_r=\mathfrak{p}^+,
\end{equation*}
where $r$ is the rank of the Hermitian symmetric space $G/K$ (see Table \ref{tab1}).

\begin{table}[t]
	
	\centering
	\renewcommand{\arraystretch}{1.5}
	\setlength\tabcolsep{5pt}
	\begin{tabular}{|l|l|l|l|}
		%\begin{tabular}{|l|l|l|l|}
		\hline
		$\operatorname{Lie}(G)$ &   $r$ & $c$ & $h^\vee-1$ \\  
		\hline  
		$\mathfrak{su}(k,n-k)$ & $\min\{k, n-k\}$ &  $1$ & $n-1$  \\ % \hline
		$\mathfrak{sp}(n,\mathbb{R})$  & $n$ &   $1/2$ & $n$   \\ % \hline 
		$\mathfrak{so}^{*}(2n)$  & $\lfloor\frac{n}{2}\rfloor$  & $2$ & $2n-3$ \\ %\hline
		$\mathfrak{so}(2,2n-1)$  & $2$  &  $n-3/2$ & $2n-2$ \\ %\hline 
		$\mathfrak{so}(2,2n-2)$  & $2$ &  $n-2$& $2n-3$ \\ %\hline
		$\mathfrak{e}_{6(-14)}$  & $2$ &  $3$& $11$ \\ %\hline
		$\mathfrak{e}_{7(-25)}$  & $3$ &  $4$& $17$ \\ %\hline
		\hline
	\end{tabular}
	\bigskip
	\caption{Constants of  Lie groups of Hermitian type}
	\label{tab1}
\end{table}

We say the finite generated $U(\mathfrak{g})$-module $M$ is a \textit{Harish-Chandra module} if it has compatible actions of $\mathfrak{g}$ and $K$, and every $m\in M$ lies in a finite-dimensional $K$-invariant subspace, and every irreducible $K$-representation has finite multiplicity in $M$ (e.g., \cite{Vogan81book}). 

In particular, if $M=L(\lambda)$ is a highest weight HC-module, we can choose $M_0$ to be the finite dimensional $U(\mathfrak{k})$-module generated by $\mathbb{C}_{\lambda-\rho}$. Then $M_0$ is $\mathfrak{k}\oplus\mathfrak{p}^+$-invariant. In view of \eqref{embed},
\[
V(L(\lambda))\subset(\mathfrak{g}/(\mathfrak{k}\oplus\mathfrak{p}^+))^*\simeq(\mathfrak{p}^-)^*\simeq \mathfrak{p}^+,
\]
where the last isomorphism is induced from the Killing form. As shown in \cite{Vo91}, the associated variety $V(M)$ is also $K$-invariant. In fact, Yamashita \cite{Hir01} proved that $ V(M) $ must be one of $ \overline{\mathcal{O}}_{k} $.

\begin{Lem}
	Let $L(\lambda)$ be a highest weight HC-module. Then 
	\begin{equation*}
	V(L(\lambda))=\overline{\mathcal{O}}_{k(\lambda)}
	\end{equation*}
	for some $1\leq k(\lambda)\leq r$.
\end{Lem}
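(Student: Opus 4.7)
The plan is to execute the three-step argument already outlined in the paragraph preceding the lemma, filling in the one verification that is not entirely immediate (namely, that the bottom-layer generating subspace is $\mathfrak{p}^+$-invariant) and then invoking Vogan's $K$-invariance and Yamashita's orbit classification.

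First, I would take as generating subspace $M_0 = U(\mathfrak{k})\cdot v_\lambda$, where $v_\lambda$ is a highest weight vector of $L(\lambda)$. Being a highest weight HC module, $L(\lambda)$ has a finite-dimensional bottom-layer $K$-type that generates the whole module, so this $M_0$ is a legitimate (finite-dimensional) choice. By construction it is $\mathfrak{k}$-invariant. For $\mathfrak{p}^+$-invariance I would induct on the $U(\mathfrak{k})$-filtration: given $X\in\mathfrak{p}^+$ and $u\in U_n(\mathfrak{k})$, one writes $Xu v_\lambda = uX v_\lambda + [X,u]v_\lambda$, and observes that $Xv_\lambda=0$ because $\mathfrak{p}^+$ lies in the nilradical of the Borel determining the highest weight, while $[X,u]\in U_{n-1}(\mathfrak{k})\cdot\mathfrak{p}^+$ because $[\mathfrak{p}^+,\mathfrak{k}]\subseteq\mathfrak{p}^+$. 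Iterating, $\mathfrak{p}^+\cdot M_0=0$.

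Next, applying \eqref{embed} with $\mathfrak{a}=\mathfrak{k}\oplus\mathfrak{p}^+$ and using the Killing-form identification $(\mathfrak{p}^-)^*\simeq\mathfrak{p}^+$ immediately yields $V(L(\lambda))\subseteq\mathfrak{p}^+$. Since $V(L(\lambda))$ is defined as the zero set of the annihilator of $\operatorname{gr} L(\lambda)$, it is Zariski closed, and by \cite{Vo91} it is $K$-stable; a Zariski closed $K$-stable subvariety of a $K_{\mathbb{C}}$-representation is automatically $K_{\mathbb{C}}$-stable. Hence $V(L(\lambda))$ is a closed $K_{\mathbb{C}}$-invariant subvariety of $\mathfrak{p}^+$, and so is a finite union of orbit closures $\overline{\mathcal{O}}_i$.

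Finally, I would invoke the linear chain structure $\overline{\mathcal{O}}_0\subset\overline{\mathcal{O}}_1\subset\cdots\subset\overline{\mathcal{O}}_r$: any finite union of members of a totally ordered chain equals its maximum, so $V(L(\lambda))=\overline{\mathcal{O}}_{k(\lambda)}$ for a unique index $k(\lambda)$, recovering the classification of \cite{Hir01}. The bound $k(\lambda)\geq 1$ reflects the convention (implicit here) that $L(\lambda)$ is infinite dimensional, since a finite-dimensional $L(\lambda)$ would give $V(L(\lambda))=\{0\}=\overline{\mathcal{O}}_0$. There is no serious obstacle: the only point that requires genuine verification is the $\mathfrak{p}^+$-invariance of $M_0$ in Step 1, which justifies the embedding into $\mathfrak{p}^+$ and makes Yamashita's theorem directly applicable.
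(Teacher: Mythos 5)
Your proof is correct and follows essentially the same route as the paper: take $M_0=U(\mathfrak{k})\cdot v_\lambda$, check it is $\mathfrak{k}\oplus\mathfrak{p}^+$-invariant so that \eqref{embed} places $V(L(\lambda))$ inside $\mathfrak{p}^+$, and then combine $K$-invariance (Vogan) with the linear chain of $K_{\mathbb{C}}$-orbit closures to land on a single $\overline{\mathcal{O}}_{k(\lambda)}$. The paper handles the last step by citing Yamashita's theorem directly, while you re-derive it from the chain structure and also spell out the $\mathfrak{p}^+$-invariance computation the paper leaves implicit; these are fill-ins, not a different argument.
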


Bai-Hunziker \cite{BH} found a uniform expression for the dimension of $  \overline{\mathcal{O}}_k $.

\begin{Prop}\label{C: dimYk}
	For $0\leq k\leq r$, 
	\begin{equation*}\label{dimYk}
		\dim \overline{\mathcal{O}}_k=k(h^\vee-1) -k(k-1)c.
	\end{equation*}
The data of $r$, $h^\vee$ and $c$ are listed in Table \ref{tab1}.
\end{Prop}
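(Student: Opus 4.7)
The plan is to verify the formula case by case, using the explicit description of the $K_{\mathbb{C}}$-orbit structure on $\mathfrak{p}^+$ that is available for each of the seven simple Hermitian pairs in Table \ref{tab1}. In every case $\mathcal{O}_k$ can be realized as a ``rank $k$'' stratum of $\mathfrak{p}^+$ (in the sense of matrix rank or Jordan-algebra rank), so that $\dim\mathcal{O}_k$ is computable by classical means; the linearity of the closure chain $\overline{\mathcal{O}}_0\subset\cdots\subset\overline{\mathcal{O}}_r$ then forces these strata to match the indexing $0\leq k\leq r$.

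First I would treat the three matrix-type families. For $\mathfrak{g}=\mathfrak{su}(p,n-p)$ with $p\leq n-p$ and $r=p$, one identifies $\mathfrak{p}^+\cong M_{p,\,n-p}(\mathbb{C})$ with $K_{\mathbb{C}}$ acting by left and right multiplication; the orbit $\mathcal{O}_k$ is the locus of rank-$k$ matrices, of dimension $k(n-k)=k(n-1)-k(k-1)$, matching $h^\vee-1=n-1$ and $c=1$. For $\mathfrak{g}=\mathfrak{sp}(n,\mathbb{R})$ with $r=n$, $\mathfrak{p}^+$ is the space of symmetric $n\times n$ matrices under $A\mapsto gAg^{t}$; the rank-$k$ stratum has dimension $kn-k(k-1)/2$, matching $h^\vee-1=n$ and $c=1/2$. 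For $\mathfrak{g}=\mathfrak{so}^*(2n)$ with $r=\lfloor n/2\rfloor$, $\mathfrak{p}^+$ is the space of skew-symmetric $n\times n$ matrices; the rank-$2k$ stratum has dimension $k(2n-2k-1)=k(2n-3)-2k(k-1)$, matching $h^\vee-1=2n-3$ and $c=2$.

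Next I would handle the orthogonal rank-two cases $\mathfrak{g}=\mathfrak{so}(2,m)$ with $m\in\{2n-1,2n-2\}$, for which $r=2$ and $\mathfrak{p}^+\cong\mathbb{C}^m$. The unique non-trivial intermediate orbit $\mathcal{O}_1$ is the punctured null quadric attached to the invariant symmetric form, of dimension $m-1$, and $\mathcal{O}_2=\mathfrak{p}^+\setminus\{0\}$ of dimension $m$; direct substitution of $(h^\vee-1,c)$ from Table \ref{tab1} reproduces both values. Finally, for the exceptional pairs $\mathfrak{e}_{6(-14)}$ and $\mathfrak{e}_{7(-25)}$, the orbits $\mathcal{O}_k$ on $\mathfrak{p}^+$ arise naturally from the simple exceptional Jordan algebras (of rank $2$ and $3$ respectively), and their dimensions are tabulated in standard references on nilpotent orbits; inserting $\dim\mathcal{O}_k$ into the right-hand side with $(r,c,h^\vee-1)$ from Table \ref{tab1} (so $11,16$ for $\mathfrak{e}_{6(-14)}$ and $17,26,27$ for $\mathfrak{e}_{7(-25)}$) completes the verification.

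The main obstacle is not a conceptual difficulty but the need to assemble the various realizations under a common set of conventions, and in particular to recognize that the constant $h^\vee$ in Table \ref{tab1} plays the role of a ``dual Coxeter-type'' invariant of the Hermitian pair rather than of $\mathfrak{g}$ itself. Once this bookkeeping is in place, the uniform expression $k(h^\vee-1)-k(k-1)c$ emerges as an a posteriori synthesis of the seven case-by-case computations.
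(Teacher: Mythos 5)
Your case-by-case verification is sound: the paper itself offers no proof of this proposition, citing it from Bai--Hunziker \cite{BH}, so your route is necessarily different from the text, and the arithmetic checks out in all seven cases ($k(n-1)-k(k-1)=k(n-k)$ for the determinantal strata in $M_{p,n-p}(\mathbb{C})$, $kn-\tfrac{1}{2}k(k-1)$ for symmetric matrices, $k(2n-3)-2k(k-1)=k(2n-2k-1)$ for the rank-$2k$ skew strata, dimensions $m-1$ and $m$ in the two rank-two orthogonal cases, and $11,16$ resp. $17,26,27$ for the exceptional Jordan-algebra strata). What the uniform approach of \cite{BH} buys, by contrast, is a single computation: writing $\mathcal{O}_k$ as the $K_{\mathbb{C}}$-orbit of $e_{\gamma_1}+\cdots+e_{\gamma_k}$ for Harish-Chandra's strongly orthogonal noncompact roots $\gamma_1,\dots,\gamma_r$, one reads off $\dim\mathcal{O}_k$ from the restricted root multiplicities of $G/K$, and the constants acquire intrinsic meanings ($c$ is half the multiplicity of the intermediate restricted roots, while $h^\vee$ is literally the dual Coxeter number of the complexified $\mathfrak{g}$, not merely a ``dual Coxeter-type'' invariant of the pair as you suggest); this explains why the same expression works for the exceptional cases without consulting tables. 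Two small inaccuracies in your write-up, neither affecting the dimension count: in the $\mathfrak{so}(2,m)$ cases the open orbit $\mathcal{O}_2$ is the complement of the null quadric, not $\mathfrak{p}^+\setminus\{0\}$ (which is a union of two orbits), though $\overline{\mathcal{O}}_2=\mathfrak{p}^+$ still has dimension $m$; and in each matrix case you should say explicitly (it is classical, via Kostant--Sekiguchi or direct linear algebra) that the $K_{\mathbb{C}}$-orbits on $\mathfrak{p}^+$ are exactly the rank strata, which is the step identifying your strata with the chain $\overline{\mathcal{O}}_0\subset\cdots\subset\overline{\mathcal{O}}_r$ of \cite{Hir01}.
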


\subsection{The \af-function}\label{sec:cell}

Recall that the Weyl group $ W  $ of $ \mf{g} $ is a Coxeter group generated by the set $ S=\{s_\al\mid\al\in\Delta \} $ of simple reflections. Let $\ell:W\to \mathbb{N}$ be the \textit{length function} on $W$. Given an indeterminate $v$, we have a Hecke algebra $ \mc{H} $ over $ \mc{A} :=\mathbb{Z}[v,v^{-1}]$. It has a basis $ T_w $, $ w\in W $ over $ \mc{A} $ with relations \[
T_wT_{w'}=T_{ww'} \text{ if }\ell(ww')=\ell(w)+\ell(w'),
\]\[
\text{and }(T_s+v^{-1})(T_s-v)=0 \text{ for any }s\in S.
\]
The Kazhdan-Lusztig basis $ C_w $, $ w\in W $  of $ \mc{H} $ are defined to be the unique elements $ C_w $ such that
\[
\overline{C_w}=C_w,\qquad C_w\equiv T_w \mod{\mc{H}_{<0}}
\]
where $ \bar{\,} :\mc{H}\rar\mc{H}$ is the bar involution such that $ \bar{q}=q^{-1} $ and $ \overline{T_w} =T_{w^{-1}}^{-1}$. Here $ \mc{H}_{<0}=\oplus_{w\in W}\mc{A}_{<0}T_w $ with $ \mc{A}_{<0}=v^{-1}\mathbb{Z}[v^{-1}] $. Let $ C_xC_y=\sum_{z\in W} h_{x,y,z}C_z $ with $ h_{x,y,x}\in\mc{A} $. Then $ \mathbf{a}:W\rar\mathbb{N} $ is defined by\[
\aff(z)=\max\{\deg h_{x,y,z}\mid x,y\in W \} \text{ for } z\in W.
\]

The following results can be found in \cite{lusztig2003hecke}.

\begin{Lem}
%	[{\cite[Lem. 3.5]{BX}}]
	\label{lem:Hecke}
	Let $ (W,S) $ be a Coxeter group.
	\begin{itemize}
		\item [(1)] For all $ w\in W $, $ \aff(w)=\aff(w^{-1}) $ .
		\item [(2)] If $ w_I $ is the longest element of the parabolic subgroup of $ W $ generated by a subset $ I\subset S $, then $ \aff(w_I)=\ell(w_I) $.
		\item [(3)] If $ W $ is a direct product of  Coxeter subgroups $ W_1 $ and $ W_2 $, then\[
		\aff(w)=\aff(w_1)+\aff(w_2)
		\]for  $ w=(w_1,w_2) \in W_1\times W_2=W$.
	\end{itemize}
\end{Lem}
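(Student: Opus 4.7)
The plan is to deduce each assertion from the defining identities of the Kazhdan-Lusztig basis together with simple symmetries of the Hecke algebra $\mathcal{H}$.

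For part (1), I would introduce the anti-automorphism $\iota\colon \mathcal{H}\to\mathcal{H}$ defined on the $T$-basis by $\iota(T_w)=T_{w^{-1}}$ and verify that it commutes with the bar involution. Because $\iota(T_w)\equiv T_{w^{-1}}\pmod{\mathcal{H}_{<0}}$, the uniqueness characterization of $C_w$ forces $\iota(C_w)=C_{w^{-1}}$. Applying $\iota$ to the identity $C_xC_y=\sum_z h_{x,y,z}C_z$ and using that $\iota$ reverses multiplication yields $C_{y^{-1}}C_{x^{-1}}=\sum_z h_{x,y,z}C_{z^{-1}}$, so that $h_{y^{-1},x^{-1},z^{-1}}=h_{x,y,z}$. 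Taking maxima of the degrees over $x$ and $y$ then gives $\aff(z)=\aff(z^{-1})$.

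For part (2), the key step is to show $C_{w_I}^2=P_I(v)\,C_{w_I}$ for some $P_I(v)\in\mathcal{A}$ whose top degree is exactly $\ell(w_I)$. This rests on the relation $C_sC_{w_I}=(v+v^{-1})C_{w_I}$, valid for every $s\in I$ because $sw_I<w_I$. Expanding one factor of $C_{w_I}$ in the $T$-basis and inducting on the length of the expanding element (replacing each $T_s$ acting on $C_{w_I}$ by the scalar $v$ or $-v^{-1}$ according as $s\in I$), the polynomial $P_I$ emerges with leading term $v^{\ell(w_I)}$. Combined with the universal bound $\deg h_{x,y,w_I}\le \ell(w_I)$, which follows from the triangular change of basis between $\{C_w\}$ and $\{T_w\}$ plus an elementary degree count for products $T_xT_y$ in the $T$-basis, this pins down $\aff(w_I)=\ell(w_I)$.

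Part (3) is essentially formal. The natural isomorphism $\mathcal{H}(W)\cong\mathcal{H}(W_1)\otimes_{\mathcal{A}}\mathcal{H}(W_2)$ sends $T_{(w_1,w_2)}$ to $T_{w_1}\otimes T_{w_2}$ and is compatible with the bar involution and with the filtration by $\mathcal{H}_{<0}$, since $\ell(w_1,w_2)=\ell(w_1)+\ell(w_2)$. As $C_{w_1}\otimes C_{w_2}$ is bar-invariant and congruent to $T_{(w_1,w_2)}$ modulo $\mathcal{H}_{<0}$, uniqueness forces $C_{(w_1,w_2)}=C_{w_1}\otimes C_{w_2}$. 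The structure constants then factor as $h_{(x_1,x_2),(y_1,y_2),(z_1,z_2)}=h^{(1)}_{x_1,y_1,z_1}\,h^{(2)}_{x_2,y_2,z_2}$, and the degrees add under the maximum, producing $\aff(w_1,w_2)=\aff(w_1)+\aff(w_2)$. The main obstacle I anticipate is securing the sharp matching upper bound in part (2); parts (1) and (3) reduce to direct manipulation of the defining identities once the anti-involution and tensor factorization of the Kazhdan-Lusztig basis have been identified.
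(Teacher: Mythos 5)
The paper itself offers no argument for this lemma (it is quoted from \cite{lusztig2003hecke}), so the relevant comparison is with the standard proofs there. Your treatments of (1) and (3) are correct and are essentially those standard arguments: the anti-automorphism $T_w\mapsto T_{w^{-1}}$ commutes with the bar involution and preserves $\mathcal{H}_{<0}$, so it sends $C_w$ to $C_{w^{-1}}$ and gives $h_{x,y,z}=h_{y^{-1},x^{-1},z^{-1}}$, whence $\aff(z)=\aff(z^{-1})$; and the tensor decomposition $\mathcal{H}(W)\cong\mathcal{H}(W_1)\otimes_{\mathcal{A}}\mathcal{H}(W_2)$ is compatible with the bar involution and with $\mathcal{H}_{<0}$, so the Kazhdan--Lusztig basis and the structure constants factor and the degrees add. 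In (2), the lower bound is also fine: from $C_{w_I}=\sum_{y\in W_I}v^{\ell(y)-\ell(w_I)}T_y$ and $T_yC_{w_I}=v^{\ell(y)}C_{w_I}$ one gets $C_{w_I}^2=\bigl(\sum_{y\in W_I}v^{2\ell(y)-\ell(w_I)}\bigr)C_{w_I}$, so $\deg h_{w_I,w_I,w_I}=\ell(w_I)$ and $\aff(w_I)\geq\ell(w_I)$.

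The gap is exactly where you feared it, and your proposed fix does not work. Triangularity of the two base changes (whose off-diagonal entries lie in $\mathcal{A}_{<0}$) together with the degree count for $T$-basis products only yields $\deg h_{x,y,z}\leq\min(\ell(x),\ell(y))$: writing $C_xC_y=\sum p_{u,x}p_{u',y}T_uT_{u'}$ and $T_uT_{u'}=\sum f_{u,u',w}T_w$, each application of the quadratic relation contributes at most one factor of $v$, so $\deg f_{u,u',w}\leq\min(\ell(u),\ell(u'))$, and converting back to the $C$-basis only adds factors of non-positive degree. This is a bound in terms of $x$ and $y$, not in terms of the target $z=w_I$; since $\aff(w_I)$ is a maximum over \emph{all} $x,y\in W$, it gives at best $\aff(w_I)\leq\ell(w_0)$ when $W$ is finite (and nothing useful when $W$ is infinite), never $\leq\ell(w_I)$ for a proper parabolic. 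The genuine content of (2) is precisely this upper bound, and it is not obtained by elementary degree counting: in Lusztig's book it comes from the inequality $\aff(z)\leq\Delta(z)$ (property P1), combined with $P_{e,w_I}=1$, i.e.\ $\Delta(w_I)=\ell(w_I)$; the proof of P1 in the equal-parameter case rests on positivity of Kazhdan--Lusztig polynomials and of the structure constants (geometric input for Weyl groups, Soergel-bimodule positivity in general), or alternatively on cell-theoretic monotonicity (P4), which is equally non-elementary. So as written, part (2) of your proof is incomplete; you should either invoke P1 (available for the finite Weyl groups actually used in the paper) or cite the statement directly from \cite{lusztig2003hecke}.
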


\subsection{GK dimensions} Let $(-, -)$ be the standard bilinear form on $\mathfrak{h}^*$. For $\mu\in\mathfrak{h}^*$, define 
\begin{equation*}
\Phi_{[\mu]}:=\{\alpha\in\Phi\mid\langle\mu, \alpha^\vee\rangle\in\mathbb{Z}\},
\end{equation*}
where $\langle\mu, \alpha^\vee\rangle=2(\mu, \alpha)/(\alpha, \alpha)$. Set 
\[
W_{[\mu]}:=\{w\in W\mid w\mu-\mu\in \mathbb{Z}\Phi\}.
\]
Then $\Phi_{[\mu]}$ is a root system with Weyl group $W_{[\mu]}$. 
%(e.g., \cite{Hum08}).
 Let $\Delta_{[\mu]}$ be the simple system of $\Phi_{[\mu]}$. Set $J=\{\alpha\in\Delta_{[\mu]}\mid\langle\mu, \alpha^\vee\rangle=0\}$. Denote by $W_J$ the Weyl group generated by reflections $s_\alpha$ with $\alpha\in J$. Let $\ell_{[\mu]}$ be the length function on $W_{[\mu]}$. Thus $\ell_{[\mu]}=\ell$ when $\mu$ is integral. Put
\begin{equation*}\label{ceq1}
	W_{[\mu]}^J:=\{w\in W_{[\mu]}\mid \ell_{[\mu]}(ws_\alpha)=\ell_{[\mu]}(w)+1\ \mbox{for all}\ \alpha\in J\}.
\end{equation*}
Thus $W_{[\mu]}^J$ consists of the shortest representatives of the cosets $wW_J$ with $ w\in W_{[\mu]} $. When $\mu$ is integral, we simply write $W^J:=W_{[\mu]}^J$ .

A weight $ \mu\in\hs $ is called \textit{anti-dominant} if $ \bil{\mu}{\al} \notin\mathbb{Z}_{>0}$ for all $ \al\in\Phi^+ $. For any $\lambda\in\mathfrak{h}^*$, there exists a unique anti-dominant weight $\mu\in\hs$ and a unique $w\in W_{[\mu]}^J$ such that $\lambda=w\mu$. 
%(see for example \cite[Pro. 3.5]{Hum08}). 

\begin{Prop}[{\cite[Prop. 3.8]{BX}}]\label{pr:main1}
	Let $ \lam\in\hs $. Suppose that $\lambda=w\mu$, where $\mu$ is anti-dominant and $w\in W_{[\mu]}^J$. Then
\begin{equation*}
	\gkd L(\lambda)=|\Phi^+|-\aff_{[\lambda]}(w),
\end{equation*}
	where $\aff_{[\lambda]}$ is the $\aff$-function on $W_{[\lambda]}=W_{[\mu]}$.
\end{Prop}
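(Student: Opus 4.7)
The plan is to prove the proposition in two reduction stages. Stage one reduces from a general infinitesimal character to an integral one; stage two reduces from a singular integral character to a regular one. In the regular integral case, Lusztig's classical formula $\gkd L(w\mu) = |\Phi^+| - \aff(w)$ applies directly, and I will track the compensating shifts so that everything recombines as claimed.

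For stage one, I would invoke Soergel's equivalence (or, equivalently, the Jantzen translation mechanism extended to non-integral infinitesimal characters). This provides a block equivalence between the block of category $\mathcal{O}$ containing $L(\lambda)$ and a block of category $\mathcal{O}$ for a semisimple Lie algebra $\mathfrak{g}_{[\lambda]}$ whose root system is $\Phi_{[\lambda]}$ and whose Weyl group is $W_{[\lambda]}$. Under this equivalence, $L(\lambda)$ corresponds to a highest weight module $\widetilde{L}(w\widetilde{\mu})$ for $\mathfrak{g}_{[\lambda]}$, with $w$ now viewed in $W_{[\lambda]}$ and $\widetilde{\mu}$ antidominant integral in $\mathfrak{g}_{[\lambda]}$. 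The positive root spaces in $\Phi^+\setminus\Phi^+_{[\lambda]}$ act freely, in the filtration sense, on the highest weight line, producing the clean additive shift
$$\gkd L(\lambda) = \gkd \widetilde{L}(w\widetilde{\mu}) + \bigl(|\Phi^+| - |\Phi^+_{[\lambda]}|\bigr).$$

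For stage two, working inside $\mathfrak{g}_{[\lambda]}$, set $J = \{\alpha \in \Delta_{[\lambda]} : \langle \widetilde{\mu}, \alpha^\vee\rangle = 0\}$ and pick a regular antidominant integral weight $\mu_0$ in the closure of the same antidominant chamber as $\widetilde{\mu}$. The Jantzen translation functor $T^{\widetilde{\mu}}_{\mu_0}$ sends $\widetilde{L}(w\mu_0)$ onto $\widetilde{L}(w\widetilde{\mu})$ for $w \in W_{[\lambda]}^J$ and preserves annihilators, hence preserves GK dimensions. The regular integral Joseph-Lusztig formula (identifying $\aff$ with the degree of Goldie rank polynomials) then gives $\gkd \widetilde{L}(w\mu_0) = |\Phi^+_{[\lambda]}| - \aff_{[\lambda]}(w)$. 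Combining the two stages yields $\gkd L(\lambda) = |\Phi^+| - \aff_{[\lambda]}(w)$.

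The main obstacle is making stage one precise: one must verify that the block equivalence transports the element $w \in W_{[\lambda]}^J$ and the antidominance of $\mu$ with respect to $\Phi_{[\lambda]}$ to the analogous data in $\mathfrak{g}_{[\lambda]}$, and that the GK dimension shift is exactly $|\Phi^+| - |\Phi^+_{[\lambda]}|$. This is precisely what forces $\aff_{[\lambda]}$ on $W_{[\lambda]}$ (rather than the $\aff$-function of the ambient $W$) to appear in the formula, and any mismatch in the bookkeeping would destroy the uniform shape $|\Phi^+| - \aff_{[\lambda]}(w)$.
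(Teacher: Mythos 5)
Your overall skeleton (non-integral $\to$ integral $\to$ regular integral, then the Joseph--Lusztig formula) matches the shape of the argument this proposition rests on; note, however, that the paper itself does not reprove the statement but imports it from \cite{BX}, where the reduction across integrality and singularity is carried out through Joseph's results on Gelfand--Kirillov dimensions and Goldie-rank polynomial degrees, Lusztig's identification of those degrees with the $\aff$-function of the integral Weyl group, and the translation principle. Your stage two is essentially that standard translation step, and with the antidominant conventions used here the condition $w\in W_{[\mu]}^J$ is indeed exactly what makes translation onto the wall send the regular simple to $L(w\tilde\mu)$; but the phrase ``preserves annihilators, hence preserves GK dimensions'' is not a proof: the two annihilators have different central characters, and $\gkd$ of a module is not read off its annihilator without further input. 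The correct justification is the two-sided estimate coming from the biadjoint exact translation functors, or Joseph's identity $2\gkd L=\gkd U(\mathfrak g)/\Ann L$ combined with the Borho--Jantzen behaviour of primitive ideals under translation.

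The genuine gap is stage one. The asserted shift $\gkd L(\lambda)=\gkd_{\mathfrak g_{[\lambda]}}\widetilde L+\bigl(|\Phi^+|-|\Phi^+_{[\lambda]}|\bigr)$ is essentially the entire non-integral content of the proposition, and neither mechanism you offer establishes it. Soergel's equivalence is an equivalence of abstract categories; $\gkd$ is computed from the $U(\mathfrak g)$-module structure and is not an invariant of the abstract block, so the equivalence by itself transports no GK-dimensional information (already for $\mathfrak{sl}(2)$ with non-integral $\lambda$ the block is equivalent to the block of the trivial algebra, the simple Verma module of GK dimension $1$ corresponding to a module of GK dimension $0$: the shift is precisely what must be proved, and the equivalence does not compute it). The ``free action of the root spaces in $\Phi^+\setminus\Phi^+_{[\lambda]}$'' heuristic is not even well posed, because $\Phi^+\setminus\Phi^+_{[\lambda]}$ need not be closed under addition: take $\Phi=A_2$ and $\lambda$ with $\langle\lambda,\alpha^\vee\rangle,\langle\lambda,\beta^\vee\rangle\notin\mathbb Z$ but $\langle\lambda,(\alpha+\beta)^\vee\rangle=1$, so $\Phi_{[\lambda]}=\{\pm(\alpha+\beta)\}$. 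Then the non-integral root vectors $f_\alpha,f_\beta$ generate all of $\mathfrak n^-$, and the singular vector generating the maximal submodule of $M(\lambda)$ has symbol a nonzero multiple of $f_\alpha f_\beta$, so this product annihilates the generator of $\operatorname{gr}L(\lambda)$: the monomials in $f_\alpha,f_\beta$ do not act freely on the highest weight line even at the graded level, and the associated variety is cut out by the $\alpha$- and $\beta$-coordinates rather than being the span of the ``non-integral directions''. The dimension count $\gkd L(\lambda)=2=|\Phi^+|-|\Phi^+_{[\lambda]}|$ still comes out right, but not by your mechanism. To close the gap one needs genuine input at this step, e.g.\ the Kazhdan--Lusztig character formula in the non-integral case together with Joseph's translation of leading character asymptotics into GK dimensions, or, as in \cite{BX}, the expression of $\gkd L(\lambda)$ as $|\Phi^+|$ minus the degree of the relevant Goldie-rank polynomial, which is governed by the integral Weyl group. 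As written, your stage one restates the difficulty rather than resolving it.
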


\subsection{Upper closures of chambers} 
Let $E:=\mathbb{R}\Phi$. For $ \lambda\in E\subset\hs$, define 
\begin{equation}\label{eq:phi-lam}
	\Psi_\lambda^+=\{\alpha\in\Phi^+\mid\langle\lambda, \alpha^\vee\rangle>0\}.
\end{equation}
%When $\Phi$ is fixed, there is an \textit{anti-dominant chamber} (in $E$):
The set
\begin{equation*}
C_\circ:=\{\lambda\in E\mid\langle\lambda, \alpha^\vee\rangle<0\ \mbox{for all}\ \alpha\in\Phi^+\}.
\end{equation*}
is called the anti-dominant chamber in $ E $.
For any chamber $ C $, there exists a unique $ w\in W $ such that $C=wC_\circ$. Denote 
\begin{equation*}
\begin{aligned}
\Phi_C^+&=\{\alpha\in \Phi^+\mid\langle\lambda, \alpha^\vee\rangle>0\ \mbox{for all}\ \lambda\in C\},\\
\Phi_C^-&=\{\alpha\in \Phi^+\mid\langle\lambda, \alpha^\vee\rangle<0\ \mbox{for all}\ \lambda\in C\}.
\end{aligned}
\end{equation*}
Thus $\Phi^+=\Phi_C^+\sqcup \Phi_C^-$. Obviously $\Phi_C^+=\Psi_\lambda^+$ for any $\lambda\in C$.  %In particular, $\Phi_{C_\circ}^+=\emptyset$ and $\Phi_{C_\circ}^-=\Phi^+$.

\begin{Defff}\label{defuc}
	If $C$ is a chamber of $E$, the \textit{upper closure} $\widehat C$ of $C$ is the set of $\lambda\in E$ such that $\langle\lambda, \alpha^\vee\rangle>0$ for $\alpha\in\Phi_C^+$ and $\langle\lambda, \alpha^\vee\rangle\leq 0$ for $\alpha\in\Phi_C^-$.
\end{Defff}

Here we collect some easy facts \cite{Hum90}.

\begin{Lem}\label{uclem1}
	
	\begin{itemize}
		\item [(1)] $E=\bigsqcup\limits_{w\in W} \widehat{wC_\circ}$.
		\item [(2)] $\Phi_{wC_\circ}^+=\{\alpha\in\Phi^+\mid w^{-1}\alpha<0\}$ and $\Phi_{wC_\circ}^-=\{\alpha\in\Phi^+\mid w^{-1}\alpha>0\}$.
	\end{itemize}
\end{Lem}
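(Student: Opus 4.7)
The plan is to establish (2) first by unwinding the definitions, then derive (1) via a perturbation argument, with uniqueness flowing directly from (2).

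For (2), I would argue as follows. A positive root $\alpha$ lies in $\Phi_{wC_\circ}^+$ iff $\langle \lambda, \alpha^\vee \rangle > 0$ for every $\lambda \in wC_\circ$. Parametrizing $\lambda = w\mu$ with $\mu \in C_\circ$ and using the $W$-equivariance of the pairing, this becomes $\langle \mu, w^{-1}\alpha^\vee \rangle > 0$ for every $\mu \in C_\circ$. Since $C_\circ$ is the antidominant chamber, $\langle \mu, \beta^\vee \rangle < 0$ on $C_\circ$ for $\beta \in \Phi^+$ and $\langle \mu, \beta^\vee \rangle > 0$ on $C_\circ$ for $\beta \in -\Phi^+$. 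The condition is therefore equivalent to $w^{-1}\alpha \in -\Phi^+$, i.e.\ $w^{-1}\alpha < 0$. Reversing the strict inequality yields the description of $\Phi_{wC_\circ}^-$, and together with $\Phi^+=\Phi_{wC_\circ}^+\sqcup\Phi_{wC_\circ}^-$ the two descriptions are consistent.

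For (1), uniqueness is immediate from (2): if $\lambda \in \widehat{wC_\circ}$, then Definition \ref{defuc} forces $\Phi_{wC_\circ}^+ \subseteq \{\alpha \in \Phi^+ \mid \langle \lambda,\alpha^\vee\rangle > 0\}$, while the reverse inclusion holds because any such $\alpha$ cannot lie in $\Phi_{wC_\circ}^-$ (which would force $\langle\lambda,\alpha^\vee\rangle \leq 0$). Thus $\Phi_{wC_\circ}^+$ is completely determined by $\lambda$; by (2) it is the inversion set of $w^{-1}$, and since an element of a Coxeter group is determined by its inversion set, $w$ is unique. For existence, fix a regular $\mu_0 \in C_\circ$ and consider $\lambda + \ep \mu_0$ for $\ep > 0$ small. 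For $\alpha \in \Phi^+$ with $\langle \lambda,\alpha^\vee\rangle \neq 0$, the sign of $\langle \lambda+\ep\mu_0,\alpha^\vee\rangle$ agrees with that of $\langle \lambda,\alpha^\vee\rangle$ once $\ep$ is small enough; for $\alpha$ with $\langle \lambda,\alpha^\vee\rangle = 0$, we get $\langle \lambda+\ep\mu_0,\alpha^\vee\rangle = \ep\langle \mu_0,\alpha^\vee\rangle < 0$. Hence $\lambda+\ep\mu_0$ lies in a chamber $wC_\circ$ whose $\Phi_{wC_\circ}^+$ equals $\{\alpha \in \Phi^+ \mid \langle \lambda,\alpha^\vee\rangle > 0\}$, and the defining inequalities of $\widehat{wC_\circ}$ in Definition \ref{defuc} are then satisfied by $\lambda$.

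There is no substantive obstacle, as both assertions are standard properties of Weyl chambers. The only step requiring a moment of care is the consistency of the perturbation direction with Definition \ref{defuc}: choosing $\mu_0 \in C_\circ$ (rather than an arbitrary regular vector) guarantees that each wall on which $\lambda$ lies is approached from the half-space $\langle \cdot, \alpha^\vee \rangle < 0$, which is precisely the side prescribed for the upper closure.
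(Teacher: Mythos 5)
Your proof is correct. The paper offers no argument for Lemma \ref{uclem1} at all, simply citing \cite{Hum90} for these standard facts, so there is nothing to compare step by step: your derivation of (2) from the $W$-invariance of the pairing together with the antidominance of $C_\circ$, the uniqueness in (1) via the fact that $\Phi_{wC_\circ}^+$ is the inversion set of $w^{-1}$, and the existence via the perturbation $\lambda+\ep\mu_0$ with $\mu_0\in C_\circ$ (which is exactly what makes $\lambda$ land in the upper closure, per Definition \ref{defuc}, rather than merely in the closure) are precisely the standard chamber-geometry arguments that the citation points to.
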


\begin{Lem}\label{uclem2}
	Let $w\in W$ and $\mu$ be an integral anti-dominant weight. Set $J=\{\alpha\in\Delta\mid\langle\mu, \alpha^\vee\rangle=0\}$. Then the following conditions are equivalent: $(1)$ $w\in W^J$; $(2)$ $w\gamma>0$ for all $\gamma\in J$; $(3)$ $w\mu\in\widehat{wC_\circ}$; $(4)$ $\Psi_{w\mu}^+=\Phi_{wC_\circ}^+$. 
\end{Lem}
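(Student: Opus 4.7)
The plan is to prove the four-way equivalence by establishing (1)$\Leftrightarrow$(2), (2)$\Leftrightarrow$(3), and (3)$\Leftrightarrow$(4), exploiting the anti-dominance of $\mu$ and Lemma \ref{uclem1}(2) as the main technical inputs.

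For (1)$\Leftrightarrow$(2), I would quote the standard fact in Coxeter group theory that $\ell(ws_\gamma) = \ell(w) + 1$ iff $w\gamma > 0$ (applied to the simple reflection $s_\gamma$ with $\gamma \in \Delta$); see e.g.\ Humphreys \cite{Hum90}. So the condition defining $W^J$ unpacks exactly to condition (2). A short side-remark: the same standard theory (``descent'' characterization of minimal coset representatives) then upgrades (2) to the stronger statement that $w\beta > 0$ for every $\beta \in \Phi_J^+$, which will be used in the next step.

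For (2)$\Leftrightarrow$(3), the plan is to rewrite the definition of $\widehat{wC_\circ}$ using Lemma \ref{uclem1}(2) and then use anti-dominance. Writing $\beta = -w^{-1}\alpha$ as $\alpha$ ranges over $\Phi_{wC_\circ}^+$, we have $\beta \in \Phi^+$ and $\langle w\mu,\alpha^\vee\rangle = -\langle \mu,\beta^\vee\rangle$. Since $\mu$ is integral and anti-dominant, $\langle\mu,\beta^\vee\rangle \le 0$ for every $\beta \in \Phi^+$, with equality iff $\beta \in \Phi_J^+$ (where $\Phi_J$ is the parabolic subsystem spanned by $J$). Similarly, for $\alpha \in \Phi_{wC_\circ}^-$, the condition $\langle w\mu,\alpha^\vee\rangle \le 0$ in Definition \ref{defuc} holds automatically. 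Hence $w\mu \in \widehat{wC_\circ}$ reduces to the single requirement that $\beta \notin \Phi_J^+$ whenever $w\beta < 0$, i.e.\ $w$ sends $\Phi_J^+$ into $\Phi^+$. This last condition is equivalent to (2) via the standard descent characterization mentioned above.

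Finally, (3)$\Leftrightarrow$(4) is a straightforward unraveling of definitions: Definition \ref{defuc} says $w\mu \in \widehat{wC_\circ}$ iff $\Phi_{wC_\circ}^+ \subseteq \Psi_{w\mu}^+$ and $\Psi_{w\mu}^+ \cap \Phi_{wC_\circ}^- = \emptyset$, and since $\Psi_{w\mu}^+ \subseteq \Phi^+ = \Phi_{wC_\circ}^+ \sqcup \Phi_{wC_\circ}^-$, this is equivalent to the equality $\Psi_{w\mu}^+ = \Phi_{wC_\circ}^+$.

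The only step with any content is (2)$\Leftrightarrow$(3), and even there the ``obstacle'' is purely bookkeeping: one must keep track of the difference between $J \subseteq \Delta$ and $\Phi_J^+$, and invoke the standard fact that the minimal-length coset representatives in $W^J$ are exactly the $w$ sending $\Phi_J^+$ into $\Phi^+$ (not merely $J$ into $\Phi^+$, although the two conditions coincide). No new computations beyond what is already available from Coxeter theory and anti-dominance are required.
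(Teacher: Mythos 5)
Your proposal is correct, and it coincides with the paper's argument in most respects: the equivalences (1)$\Leftrightarrow$(2) and (3)$\Leftrightarrow$(4) are handled the same way, and your computation for $\alpha\in\Phi_{wC_\circ}^+$ (rewriting via $\beta=-w^{-1}\alpha$, using integrality plus anti-dominance to get $\langle\mu,\beta^\vee\rangle\le 0$ with equality exactly when $\beta\in\Phi_J^+$) is precisely the paper's argument for (2)$\Rightarrow$(3). Where you genuinely diverge is the converse direction. The paper proves (3)$\Rightarrow$(1) indirectly: it writes $w\mu=w'\mu$ with $w'\in W^J$, applies the already-proved implication to conclude $w\mu\in\widehat{w'C_\circ}$, and then invokes the partition $E=\bigsqcup_{y\in W}\widehat{yC_\circ}$ (Lemma \ref{uclem1}(1)) to force $w'=w$. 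You instead observe that the inequalities on $\Phi_{wC_\circ}^-$ are automatic from anti-dominance, so membership $w\mu\in\widehat{wC_\circ}$ is outright \emph{equivalent} to the absence of roots of $\Phi_J^+$ in the inversion set of $w$, i.e.\ to $w(\Phi_J^+)\subset\Phi^+$, which is the standard characterization of $W^J$. Your route gives a direct biconditional and dispenses with Lemma \ref{uclem1}(1) altogether, at the cost of explicitly invoking (and, as you note, justifying) the upgrade from $w(J)\subset\Phi^+$ to $w(\Phi_J^+)\subset\Phi^+$ --- an upgrade the paper also uses implicitly in the step ``$\beta$ is a sum of roots in $J$, so $w\beta>0$.'' Both arguments are sound; yours is slightly more self-contained, while the paper's reuses the chamber-partition machinery it has already set up.
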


\begin{proof}
	The equivalence of (1) and (2) is clear, while the equivalence of (3) and (4) follows from Definition \ref{defuc}. Set $\lambda=w\mu$ and $C=wC_\circ$. 
	
	Now we show (2) implies (3). Let $\alpha\in\Phi_C^+$. Lemma \ref{uclem1} yields $w^{-1}\alpha<0$. Set $\beta=-w^{-1}\alpha$. We get $\langle\mu, \beta^\vee\rangle\leq0$ since $\mu$ is anti-dominant. 
	The equality can not hold; otherwise, $\beta$ must be a sum of roots in $J$, and then (2) yields $-\alpha=w\beta>0$, a contradiction. Thus $\langle\mu, \beta^\vee\rangle<0$, and hence $\langle\lambda, \alpha^\vee\rangle>0$. On the other hand, if $\alpha\in\Phi_C^-$, then $\langle\lambda, \alpha^\vee\rangle=\langle\mu, (w^{-1}\alpha)^\vee\rangle\leq 0$. Then (3) is proved.
	
	It remains to show (3) implies (1). If $\lambda\in\widehat{wC_\circ}$, then we can find $w'\in W^J$ such that $\lambda=w'\mu=w\mu$. The previous argument implies $\lambda\in \widehat{w'C_\circ}$. This forces $w'=w$, in view of Lemma \ref{uclem1}(1).
\end{proof}

The following result is an easy consequence of Lemmas \ref{pr:main1} and \ref{uclem2}.

\begin{Lem}\label{ucgk}
	Let $C$ be a chamber of $E$. Suppose that $\lambda, \nu\in\widehat{C}$ are two integral weights. Then $\gkd L(\lambda)=\gkd L(\nu)$.
\end{Lem}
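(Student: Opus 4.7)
The plan is to reduce the claim to Proposition \ref{pr:main1} by showing that both $\lambda$ and $\nu$ share the same Weyl group element in their anti-dominant factorization, even though their anti-dominant representatives may differ.

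First, since $\lambda$ is integral we have $W_{[\lambda]}=W$, and similarly $W_{[\nu]}=W$. By the existence statement just before Proposition \ref{pr:main1}, write $\lambda=w_\lambda\mu_\lambda$ with $\mu_\lambda$ anti-dominant and $w_\lambda\in W^{J_\lambda}$, where $J_\lambda=\{\alpha\in\Delta\mid\langle\mu_\lambda,\alpha^\vee\rangle=0\}$; and similarly $\nu=w_\nu\mu_\nu$ with $w_\nu\in W^{J_\nu}$.

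Next I apply Lemma \ref{uclem2} in the direction (1)$\Rightarrow$(3): from $w_\lambda\in W^{J_\lambda}$ we obtain $\lambda=w_\lambda\mu_\lambda\in \widehat{w_\lambda C_\circ}$, and likewise $\nu\in\widehat{w_\nu C_\circ}$. Since by assumption $\lambda,\nu\in\widehat{C}$, Lemma \ref{uclem1}(1) (which gives a disjoint decomposition of $E$ into upper closures) forces
\begin{equation*}
w_\lambda C_\circ = C = w_\nu C_\circ,
\end{equation*}
and hence $w_\lambda=w_\nu$. Call this common element $w$.

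Finally, Proposition \ref{pr:main1}, applied with $\aff_{[\lambda]}=\aff_{[\nu]}=\aff$ (both equal the $\aff$-function on $W$ by integrality), yields
\begin{equation*}
\gkd L(\lambda)=|\Phi^+|-\aff(w)=\gkd L(\nu),
\end{equation*}
which is the desired equality. No obstacle is expected here: the proof is a short chase through the definitions, and the only subtle point is recognizing that although $\mu_\lambda$ and $\mu_\nu$ can genuinely differ (they lie on different walls of the anti-dominant chamber), the element $w$ is pinned down by the chamber $C$ alone, so $\aff(w)$ depends only on $C$.
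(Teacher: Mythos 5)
Your proof is correct and is essentially the paper's argument: the paper simply remarks that the lemma is an easy consequence of Proposition \ref{pr:main1} and Lemma \ref{uclem2}, and your write-up fills in exactly those details (using the disjointness in Lemma \ref{uclem1}(1) to pin down the common Weyl group element $w$ from the chamber $C$). The observation that $\mu_\lambda$ and $\mu_\nu$ may differ while $w$ is determined by $C$ alone is precisely the point.
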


\subsection{Partitions and dual partitions} 
%All the above invariants can be calculated by partitions. 
We say $q=(q_1, \cdots, q_N)$ is the \textit{dual partition} of a partition $p=(p_1, \cdots, p_N)$ and write $q={}^tp$ if $q_i$ is the length of $i$-th column of the Young diagram $p$. 
Recall that $p(x)$ is the shape of the Young tableau $Y(x)$ obtained by applying Robinson-Schensted algorithm to $x\in \mathrm{Seq}_n (\Gamma)$. For convenience, we set $q(x)={}^tp(x)$.

\begin{Lem}[\textbf{Definition}]\label{def:F}
	 Let $x\in \mathrm{Seq}_n (\Gamma)$. Then
	\begin{equation*}
	\begin{aligned}
		F_a(x):&=\sum_{i\geq1}(i-1)p_i=\sum_{i\geq1}\frac{q_i(q_i-1)}{2},\\
		F_b(x):&=\sum_{i\geq1}(i-1)p_i\od=\sum_{2\nmid i}(q_i\od)^2+\sum_{2\mid i}q_i\od(q_i\od-1),\\
		F_d(x):&=\sum_{i\geq1}(i-1)p_i\ev=\sum_{2\nmid i}q_i\ev(q_i\ev-1)+\sum_{2\mid i}(q_i\ev)^2,
	\end{aligned}
   \end{equation*}
where $p=p(x)=(p_1,p_2,\cdots, p_n) $ and $q=q(x)={}^tp(x)=(q_1, q_2, \cdots, q_n)$.
\end{Lem}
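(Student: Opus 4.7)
The three identities share one combinatorial mechanism, so the plan is to prove them in parallel. The row-sum $\sum_{i\geq 1}(i-1)p_i$ equals the sum over all boxes of $p$ of the quantity $i-1$, where $i$ is the row index of the box; the two other row-sums restrict the same sum to odd and even boxes respectively. Since a Young diagram is top-justified, the $i-1$ positions directly above a box $(i,j)$ in column $j$ are all filled. Consequently each of these sums equals the number of ordered pairs $(B',B)$ of boxes in the same column with $B'$ strictly above $B$, subject to $B$ belonging to the prescribed parity class.

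For $F_a(x)$ there is no restriction and the count is $\sum_{j\geq 1}\binom{q_j}{2}$, giving the first identity. For $F_b(x)$, I would fix a column $j$ and compute its contribution. The $q_j$ boxes in column $j$ sit in rows $1,2,\ldots,q_j$, and their parities $i+j\pmod 2$ alternate, starting with parity $1+j\pmod 2$ at the top. Hence the odd boxes of column $j$ occupy rows $2,4,6,\ldots$ when $j$ is odd and rows $1,3,5,\ldots$ when $j$ is even; their total number is $q_j\od$, by (\ref{eq:ev-od}) applied to the row of $q$ of length $q_j$. A short arithmetic-progression computation then gives
\[
\sum_{\substack{i\leq q_j \\ (i,j)\text{ odd}}} (i-1) \;=\; \begin{cases}(q_j\od)^2 & j\text{ odd},\\[2pt] q_j\od\,(q_j\od-1) & j\text{ even}.\end{cases}
\]
Summing over $j$ yields the second identity. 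The parallel computation with even boxes in place of odd ones gives the third.

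There is no real obstacle. The only point that requires care is the per-column parity bookkeeping, where one must track whether the top box of the column is odd or even; once this is done, the two sums of $i-1$ collapse via $\sum_{k=1}^{n}(2k-1)=n^2$ and $\sum_{k=1}^{n}(2k-2)=n(n-1)$, which are precisely the two shapes of right-hand side that appear in the statement.
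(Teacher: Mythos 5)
Your proof is correct and follows essentially the same route as the paper: interpret $\sum_i (i-1)p_i$ (or its parity-restricted versions) as a sum over boxes and regroup it column by column. The paper only writes out this interchange for $F_a$ and asserts the other two cases are similar; your per-column parity bookkeeping, using \eqref{eq:ev-od} for the transposed diagram together with the arithmetic-progression sums $\sum_{k=1}^{m}(2k-1)=m^2$ and $\sum_{k=1}^{m}(2k-2)=m(m-1)$, is exactly the omitted detail and it checks out.
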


\begin{proof}
	Here we only prove the first equation since the ideas are similar. In fact, the first one follows from
	\[
	\sum_{i\geq 1}(i-1)p_{i}=\sum_{i\geq 1}\sum_{j=1}^{p_{i}}(i-1)=\sum_{j\geq 1}\sum_{i=1}^{q_j}(i-1)=\sum_{j\geq1}\frac{q_j(q_j-1)}{2}.
	\]
\end{proof}

\begin{ex}\label{eq:F1}
	If $ x=(x_{1}) $ is a sequence of length 1,  we  have 
	\begin{equation*}
		F_a(x)=F_d(x^-) =0,\quad F_b(x^-)=\begin{cases}
			0&\text{ if }x_{1}\leq0\\1 &\text{ if }x_{1}>0
		\end{cases} .
	\end{equation*}
\end{ex}

%\begin{Defff}
%	For $x\in \mathrm{Seq}_n (\Gamma)$, an \emph{increasing} (respectively, \emph{decreasing}) \emph{subsequence} of $x$ is a sequence $(x_{i_1}, x_{i_2}, \cdots, x_{i_m})$ with $i_1<i_2<\cdots<i_m$ such that $x_{i_j}-x_{i_k}\in\mathbb{Z}_{\leq 0}$ (resp. $x_{i_j}-x_{i_k}\in\mathbb{Z}_{>0}$) for $1\leq j<k\leq m$. %The integer $m$ is the length of the subsequence.
%\end{Defff}

The following lemma is useful.
\begin{Lem}\label{RSeq}
	%	[{\cite[Lem. 4.5]{BX}}]\label{RSeq}
	Let $x=(x_1, \cdots, x_n)\in\mathrm{Seq}_n (\Gamma)$. Assume that $y\in\mathrm{Seq}_n (\Gamma)$ such that for all $1\leq i<j\leq n$, $y_i<y_j$ whenever $x_i\leq x_j$ and $y_i>y_j$ whenever $x_i>x_j$. Then $p(x)=p(y)$.
\end{Lem}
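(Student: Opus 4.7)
The plan is to invoke Greene's theorem for the Robinson--Schensted correspondence, which asserts that for any $z \in \mathrm{Seq}_n(\Gamma)$ and every $k\ge 1$,
\[
\sum_{i=1}^{k} p(z)_i \;=\; \max\Bigl\{\,|\sigma_1|+\cdots+|\sigma_k|\;:\;\sigma_1,\ldots,\sigma_k\text{ are pairwise disjoint weakly increasing subsequences of }z\,\Bigr\}.
\]
Since a partition is determined by its row partial sums, it suffices to show that the families of weakly increasing subsequences of $x$ and of $y$, indexed by their position sets in $\{1,\ldots,n\}$, coincide.

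First I would observe that the hypotheses on $y$ force all $y_i$ to be distinct: for any $i<j$ exactly one of $x_i\le x_j$ or $x_i>x_j$ holds, and either case produces a strict inequality between $y_i$ and $y_j$. In particular, weakly increasing and strictly increasing subsequences of $y$ are the same object. Then for any positions $i_1<\cdots<i_m$ I would verify the equivalence
\[
x_{i_1}\le x_{i_2}\le\cdots\le x_{i_m}\;\Longleftrightarrow\; y_{i_1}<y_{i_2}<\cdots<y_{i_m},
\]
where the forward direction is a direct application of the hypothesis applied to each consecutive pair, and the converse is by contradiction: if $x_{i_\ell}>x_{i_{\ell+1}}$ for some $\ell$, the hypothesis yields $y_{i_\ell}>y_{i_{\ell+1}}$, contradicting the strict increase of $y$ on these positions.

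Combining Greene's identity with this positionwise bijection gives $\sum_{i\le k}p(x)_i=\sum_{i\le k}p(y)_i$ for every $k$, and hence $p(x)=p(y)$. The only nontrivial ingredient is Greene's theorem, which is classical, so no step presents a real obstacle. As an alternative self-contained route, one can induct on $n$ and check that the RS row-insertion of $x_k$ into the current $P$-tableau uses only comparisons between $x_k$ and the existing entries; under the hypothesis these comparisons agree with the corresponding ones for $y_k$, so the bumping paths (and thus the successive shapes) are identical for $x$ and $y$.
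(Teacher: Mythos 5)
The paper states Lemma \ref{RSeq} without proof, so there is no argument of record to compare against; judged on its own terms your proposal is essentially correct, but with one structural caveat and one glossed step. Your primary route invokes Greene's theorem for words (weakly increasing subsequences, repeated letters allowed); inside this paper that statement is Theorem \ref{gthm}, and the authors obtain it as a consequence of Lemma \ref{RSeq} together with the cited form of Greene's result, so quoting the word version here is circular relative to the paper's logical order. This is repairable, since independent proofs of the word version exist (e.g.\ via growth diagrams), but if the lemma is to play its intended role you should either cite such an independent source explicitly or run your second, insertion-based argument. That argument needs one more observation than you give it: your claim that ``the comparisons agree'' is immediate only in the first row, where the inserted letter $x_k$ is compared with entries whose original positions are all smaller than $k$. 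In lower rows the inserted value is a bumped entry $x_j$, and it is compared with entries $x_i$ whose original positions may satisfy $i>j$; for such a pair the hypothesis only yields $x_i\ge x_j \iff y_i>y_j$, so if $x_i=x_j$ the two insertions could a priori make different bumping choices. The fix is the standard standardization invariant: among equal values of $x$, the one with smaller original position always sits strictly to the left of its twin in any row they share and is bumped out of that row first (anything bumping the right twin would have to be $\ge$ the left twin and $<$ the right twin, impossible for equal values), so the configuration ``insert $x_j$ into a row already containing an equal $x_i$ with $i>j$'' never occurs and the bumping paths for $x$ and $y$ do coincide cell by cell. With either repair the proof is complete; your verification that the $y_i$ are distinct and that weakly increasing subsequences of $x$ correspond positionwise to increasing subsequences of $y$ is correct.
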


For $x\in \mathrm{Seq}_n (\Gamma)$ and $k\leq n$, denote by $c_k(x)$ (resp. $d_k(x)$) the maximal length of
subsequences which can be written as a union of $k$  \emph{weakly} increasing (resp. \emph{strictly} decreasing) subsequences of $x$. The following result is an immediate  consequence of \cite{greene1974} and Lemma \ref{RSeq}.

\begin{Thm}[Greene's Theorem]\label{gthm}
	Let $x\in \mathrm{Seq}_n (\Gamma)$ and $k\leq n$. Then
	\begin{equation*}
	c_k(x)=p_1+p_2+...+p_k\mbox{ and } d_k(x)=q_1+q_2+...+q_k,
	\end{equation*}
	where $p=p(x)=(p_1,p_2,\cdots, p_n)$ and $q=q(x)={}^tp(x)=(q_1,q_2,\cdots, q_n)$.
\end{Thm}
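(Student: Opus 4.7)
The plan is to deduce the generalized version from the classical Greene theorem (for permutations) by a standard tie-breaking trick, with Lemma \ref{RSeq} as the bridge.

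First I would recall the classical statement of Greene (1974): if $\pi\in\mathrm{Seq}_n(\Gamma)$ has pairwise distinct entries, then the maximal size of a union of $k$ strictly increasing (resp.\ strictly decreasing) subsequences equals $p_1+\cdots+p_k$ (resp.\ $q_1+\cdots+q_k$), where $p=p(\pi)$. The task is then to show that the ``weakly increasing / strictly decreasing'' version for a general sequence $x\in\mathrm{Seq}_n(\Gamma)$ reduces to this.

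Next I would construct an auxiliary sequence $y=(y_1,\dots,y_n)$ with pairwise distinct entries that is obtained from $x$ by breaking ties in favour of earlier positions; concretely, one can take any $y$ with $y_i<y_j$ whenever either $x_i<x_j$, or $x_i=x_j$ with $i<j$, and $y_i>y_j$ whenever $x_i>x_j$. Such a $y$ satisfies the hypotheses of Lemma \ref{RSeq}, hence $p(y)=p(x)$. Moreover, by the choice of tie-breaking one checks directly that a subsequence of $x$ is a union of $k$ \emph{weakly} increasing subsequences if and only if the corresponding subsequence of $y$ is a union of $k$ \emph{strictly} increasing subsequences, and similarly a subsequence of $x$ is a union of $k$ strictly decreasing subsequences if and only if the corresponding subsequence of $y$ is a union of $k$ strictly decreasing subsequences. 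Therefore $c_k(x)=c_k(y)$ and $d_k(x)=d_k(y)$, where on the right side the ``strictly vs.\ weakly'' distinction disappears because $y$ has distinct entries.

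Finally, applying classical Greene to $y$ gives $c_k(y)=p_1+\cdots+p_k$ and $d_k(y)=q_1+\cdots+q_k$ for $p=p(y)=p(x)$ and $q={}^tp$, which is what we want.

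The only subtle point, and the main thing to double-check, is the compatibility of conventions: the Robinson--Schensted algorithm used in Definition \ref{def:FY} is the row-insertion variant that produces tableaux with \emph{weakly} increasing rows and \emph{strictly} increasing columns. This is precisely the convention that matches ``weakly increasing for rows (i.e.\ for $c_k$)'' and ``strictly decreasing for columns (i.e.\ for $d_k$)'' in the statement, so no convention mismatch arises once the reduction to distinct entries is carried out via $y$. Beyond that, the argument is essentially immediate from Lemma \ref{RSeq} and the classical result, as asserted before the theorem.
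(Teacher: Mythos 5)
Your proposal is correct and follows exactly the route the paper intends: the paper gives no separate proof but states the result as an immediate consequence of Greene's classical theorem together with Lemma \ref{RSeq}, which is precisely your tie-breaking reduction to a sequence with distinct entries. The convention check (weakly increasing rows, strictly increasing columns) is also consistent with the paper's use of Robinson--Schensted, so no gap remains.
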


%
%%%%%%%%%%%%%%%%%%%%%%%%%%%%%%%%%%%%%%%%%%%%%%%%%%%%%%%%%%%%%%%%%%%%
%

\section{Formulas for $ \aff $-functions}\label{sec:af}

%
%%%%%%%%%%%%%%%%%%%%%%%%%%%%%%%%%%%%%%%%%%%%%%%%%%%%%%%%%%%%%%%%%%%%
%

In this section, we will prove Theorem \ref{thm1} about formulas for $\aff$-functions. The formula for type $A$ is well-known (Proposition \ref{prop:bA}). We will deal mainly with the cases of types $B$ (Proposition \ref{prop:bB}) and $D$ (Proposition \ref{prop:bd}).

For $i, j\in\mathbb{Z}$ with $i<j$, set $[i,j]= \{i,i+1,\cdots, j-1, j\} $. Recall that we map $w\in W$ for classical types to the sequence $\vv w=(w(1),\cdots,w(n)) $. Denote by $s_i$ the involution $ w\in W$ such that $ w(i)=i+1 $, $ w(i+1)=i$ and $w(k)=k$ for all $k\in[1, i-1]\cup[i+2, n]$.

%Recall that (see \ref{sec:cell}) the $ \aff $-function $\aff:W\to \mathbb{N}$ is defined for a Coxeter system $(W, S)$. Now we present combinatorial algorithms of $\aff $-functions for classical Coxeter systems.

\subsection{The system $A_{n-1}$}\label{subsec:an}

In this case, $W$ is the symmetric group $ \mf{S} _n$ of $ n $ letters and $S=\{s_1, \cdots, s_{n-1}\}$ is the set of adjacent transpositions.

\begin{Prop}\label{prop:bA}
	Let $\Phi=A_{n-1}$ and $w\in W$. Then
	\[
	\aff(w)=\sum_{i\geq 1}(i-1)p(\vv w)_i=F_a(\vv w).
	\]
\end{Prop}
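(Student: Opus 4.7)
The plan is to combine Lusztig's description of two-sided cells in type $A$ with a direct length computation on a well-chosen parabolic representative of each cell.

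First I would invoke the classical Kazhdan-Lusztig cell theory in type $A$ from \cite{lusztig1985A_n}: two elements $w, w' \in W = \mf{S}_n$ lie in the same two-sided cell if and only if the Robinson-Schensted insertion tableaux $Y(\vv w)$ and $Y(\vv{w'})$ have the same shape, i.e.\ $p(\vv w) = p(\vv{w'})$; moreover $\aff$ is constant on every two-sided cell. Granted this, $\aff(w)$ depends only on the partition $p = p(\vv w)$, so it suffices to exhibit, for each partition $p$ of $n$, one convenient cell representative and compute $\aff$ on it.

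Second I would construct the representative. Given $p$, let $q = {}^t p = (q_1, q_2, \ldots)$ be its dual partition, and let $I \subset S$ be obtained by deleting the simple reflections $s_{q_1},\, s_{q_1+q_2},\, \ldots$ from $S$, so that the standard parabolic $W_I$ is the Young subgroup $\mf{S}_{q_1}\times \mf{S}_{q_2}\times\cdots$ acting on the successive blocks $[1,q_1],\,[q_1+1,q_1+q_2],\,\ldots$. Let $w_I$ be its longest element. Then $\vv{w_I}$ is the concatenation of strictly decreasing blocks of lengths $q_1, q_2, \ldots$, and a straightforward induction on the block index shows that RS-inserting the $k$-th block creates exactly the $k$-th column of the growing tableau (each decreasing run of length $q_k$ bumps the entries of the new block down to produce a fresh column whose length equals $q_k$). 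Consequently $p(\vv{w_I}) = p = p(\vv w)$, so by Step 1 we have $\aff(w) = \aff(w_I)$.

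Third, by parts (2) and (3) of Lemma \ref{lem:Hecke},
\[
\aff(w_I) = \ell(w_I) = \sum_{i \geq 1}\binom{q_i}{2},
\]
and by Lemma \ref{def:F} this sum equals $\sum_{i\geq 1}(i-1)p_i = F_a(\vv w)$, finishing the proof. The only place requiring actual verification is the RS identity $p(\vv{w_I}) = p$ in the second step, and even this is an elementary exercise; the genuinely substantive ingredient, the RS-shape description of two-sided cells in type $A$ and the $\aff$-constancy on cells, is cited directly from Lusztig's work and used as a black box.
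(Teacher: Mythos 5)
Your proof is correct, but it follows a genuinely different route from the paper. The paper's own proof of Proposition \ref{prop:bA} is essentially a citation: it quotes the formula $\aff(w)=\sum_{i\geq1}\tfrac12 q_i(q_i-1)$ with $q={}^tp(\vv w)$ directly from \cite{BX} and then converts rows to columns via Lemma \ref{def:F}. You instead reprove that formula inside the toolkit already set up in \S 2: constancy of $\aff$ on two-sided cells together with the Robinson--Schensted parametrization of cells in $\mf S_n$ (used as a black box), a Young-subgroup longest element $w_I$ as a cell representative with $p(\vv{w_I})=p$, and Lemma \ref{lem:Hecke}(2) giving $\aff(w_I)=\ell(w_I)=\sum_i\binom{q_i}{2}$; the final conversion via Lemma \ref{def:F} coincides with the paper's last step. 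What your route buys is self-containedness relative to this paper (no appeal to \cite{BX}), at the cost of invoking the cell-theoretic input, which is standard but not lighter machinery. Two small remarks: part (3) of Lemma \ref{lem:Hecke} is not actually needed, since part (2) applied to the parabolic $W_I$ already yields $\aff(w_I)=\ell(w_I)$; and your claim $p(\vv{w_I})=p$, which you verify by an induction on blocks, is obtained even more cleanly from Greene's theorem (Theorem \ref{gthm}): a union of $k$ increasing subsequences of $\vv{w_I}$ meets the $j$-th decreasing block in at most $\min(k,q_j)$ entries, and this bound is attained, so $c_k(\vv{w_I})=\sum_j\min(k,q_j)=p_1+\cdots+p_k$.
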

\begin{proof}
	Fix $w$ and set $q=q(\vv w)={}^tp(\vv w)$. In view of \cite{BX}, we have $\aff(w)=\sum_{i\geq 1} \frac{1}{2}q_i(q_i-1)$. Then the proposition follows from Lemma \ref{def:F}.
\end{proof}

\subsection{The system $B_n$ and $B$-symbols}\label{subsec:bn}
In this case, $W=W_n$, where $ W_n $ is the group consisting of permutations $ w $ of the set $[-n, n]$
such that $ w(-i)=-w(i) $ for all $ i \in[1,n]$. Let $ t\in W_n$ be the element with $\vv t=(-1,2, \cdots, n)$. Then $ (W_n , S_n)$ is a Coxeter system, with $S_n=\{t,s_1,\cdots s_{n-1}\} $. If $ n=1 $,  $ W_1\simeq\mf{S}_2$ is the Weyl group of type $A_1$; if $ n\geq 2$,  $W_n$ is the Weyl group of type $ B_n$. For convenience, we set $ B_1=A_1 $.

Now we recall the Lusztig's symbols \cite{lusztig1977symbol}. Let 
\[ 
\begin{pmatrix}
\lambda_1~\lambda_2~\cdots~\lambda_{m+1}\\\mu_1~\mu_2~\cdots~\mu_m
\end{pmatrix}, m\geq 0
\]
be a tableau of nonnegative integers such that entries in each row are  strictly increasing. Define an equivalence relation on the set of all such tableaux via
\begin{equation*}\label{eq:equiv}
\begin{pmatrix}
\lambda_1~\lambda_2~\cdots~\lambda_{m+1}\\\mu_1~\mu_2~\cdots\mu_m
\end{pmatrix} \sim
\begin{pmatrix}
0~\lambda_1+1~\lambda_2+1~\cdots~\lambda_{m+1}+1\\0~\mu_1+1~\mu_2+1~\cdots\mu_m+1
\end{pmatrix} .
\end{equation*}
Denote by $ \Sigma_B $ the set of equivalence classes under $ \sim $. We use the same notation $ \Lambda=\begin{pmatrix}
\lambda_1~\lambda_2~\cdots~\lambda_{m+1}\\\mu_1~\mu_2~\cdots\mu_m
\end{pmatrix}  \in\Sigma_B$ to denote its equivalence class, called a \textit{$ B $-symbol}.
There is a well-defined function 
\begin{equation}\label{eq:a-syb}
\cff_B:\Sigma_B\to \mathbb{N}
\end{equation}
such that 
\begin{multline*}
\cff_B(\Lambda)=\sum_{1\leq i<j\leq m+1}\min\{\lambda_i,\lambda_j\}+\sum_{1\leq i<j\leq m}\min\{\mu_i,\mu_j\}\\
+\sum_{\substack{1\leq i\leq m+1\\1\leq j\leq m}}\min\{\lambda_i,\mu_j\}-\frac16 m(m-1)(4m+1).
\end{multline*}
For $ w\in W_n $, recall that  $ Y({}^-{\vv w}) $ is the Young tableau obtained by applying the Robinson-Schensted algorithm to the sequence\[
{}^-\vv{w}=(-w(n),-w(n-1),\cdots, -w(1), w(1), \cdots, w(n-1), w(n)),
\]
and $p({}^-\vv{w})=\sh(Y({}^-\vv{w}))$, which is a partition of $2n$. By \cite[Prop.17]{Barbasch-vogan1982classical}, there is a symbol
 $\Lambda =\begin{pmatrix}
\lambda_1~\lambda_2~\cdots~\lambda_{m+1}\\\mu_1~\mu_2~\cdots\mu_m
\end{pmatrix} \in\Sigma_B $ such that, as multisets
\begin{equation}\label{eq:symb}
  \{2\lambda_i,2 \mu_j +1\mid  i\leq  m+1, j\leq m  \} =\{ p_k+2m+1-k \mid  k\leq  2m+1\},
\end{equation}
where $p=p({}^-\vv{w})=(p_1,p_2,\cdots)$ with $p_{2m+2}=0$. Here we set $p_l=0$ when $l>2n$. It gives a well-defined map\begin{equation}\label{eq:mapsymb}
Symb_B:W_n\to \Sigma_B.
\end{equation}

\begin{Prop}\label{prop:afB}
	The $ \aff $-function on $W_n$ is the composition $\cff _B \circ Symb_B$ of maps from \eqref{eq:a-syb} and \eqref{eq:mapsymb}, that is, $\aff(w)=\cff _B \circ Symb_B(w)$ for all $w\in W_n$.
\end{Prop}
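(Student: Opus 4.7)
The plan is to deduce Proposition \ref{prop:afB} by combining three ingredients: (i) the compatibility between Lusztig's $\aff$-function on $W$ and its restriction to irreducible $W$-representations via two-sided cells; (ii) Lusztig's explicit formula for $\aff$ on special representations of $W_n$ in terms of symbols, which is precisely $\cff_B$; and (iii) the Barbasch-Vogan parametrization of two-sided cells of $W_n$ by shapes of tableaux $Y({}^-\vv{w})$, together with the translation between shapes and similarity classes of $B$-symbols encoded in \eqref{eq:symb}.

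First, I would recall that by Lusztig (see \cite[Ch.\ 20-22]{lusztig2003hecke}), the $\aff$-function is constant on each two-sided cell of $W$, and each two-sided cell corresponds to a family of irreducible $W$-representations on which the representation-theoretic $\aff$-function takes the same value. In particular, if $\mathcal{F}$ is the family attached to the two-sided cell of $w$, then $\aff(w)$ equals $\aff(E)$ for any $E\in\mathcal{F}$. Next, I would invoke Lusztig's classification \cite{lusztig1977symbol,lusztig1984char} of irreducible representations of $W_n$ by $B$-symbols: representations in the same family correspond to symbols in the same similarity class (i.e., having the same multiset of entries), and the value of $\aff$ on such a family is exactly $\cff_B(\Lambda)$ for any representative symbol $\Lambda$.

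The remaining step is to match the two-sided cell of $w$ with the symbol $Symb_B(w)$ produced from $Y({}^-\vv{w})$. Here I would quote the result of Barbasch-Vogan \cite{Barbasch-vogan1982classical}, extending Robinson-Schensted to type $B$, showing that the two-sided cell of $w\in W_n$ is determined by the shape $p({}^-\vv{w})$ (a partition of $2n$), and that the map sending this shape to a symbol via the multiset equality \eqref{eq:symb} identifies it with the similarity class attached to the corresponding family. Thus $Symb_B(w)$ lies in the family attached to the two-sided cell of $w$, and the evaluation $\cff_B(Symb_B(w))$ depends only on that family.

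Putting the three ingredients together yields
\begin{equation*}
\aff(w) \;=\; \aff(\mathcal{F}_w) \;=\; \cff_B(\Lambda) \;=\; \cff_B(Symb_B(w)),
\end{equation*}
which is the claim. The main obstacle is purely bookkeeping: one must verify that the normalization of $\cff_B$ chosen in \eqref{eq:a-syb} (with the correction term $-\tfrac{1}{6}m(m-1)(4m+1)$) agrees with Lusztig's $\aff$ on special representations, and that the passage from the partition $p({}^-\vv{w})$ to a symbol via \eqref{eq:symb} produces the representative in the similarity class that Barbasch-Vogan attach to $w$. Both points are checked by unpacking the definitions and comparing them with the tables in \cite{lusztig1984char}; no new combinatorial input is required beyond the cited results.
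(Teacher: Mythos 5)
Your proposal is correct and follows essentially the same route as the paper's proof: pass from $\aff$ on $W_n$ to the $\aff$-invariant of irreducible representations via two-sided cells (\cite[\S 20.6]{lusztig2003hecke}), use Lusztig's formula $\aff_E=\cff_B(Symb'(E))$ from \cite[\S 4.5]{lusztig1984char}, and identify the cell of $w$ with the similarity class of $Symb_B(w)$ via Barbasch--Vogan \cite{Barbasch-vogan1982classical}. The only cosmetic difference is that the paper simply chooses $E$ with $Symb'(E)=Symb_B(w)$ exactly, whereas you argue through similarity classes and the permutation-invariance of $\cff_B$; both handle the same bookkeeping point.
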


This proposition is implied in the works of Barbasch-Vogan and Lusztig. Since we have not found a straightforward
statement about it in the literature, we provide a proof here. 

\begin{proof}
	\newcommand{\irr}{\operatorname{Irr} }
	Here $W=W_n$. Write $\cff=\cff_B$, $\Sigma=\Sigma_B$ and $Symb=Symb_B$. Let $ \irr(W)$ be the set of isomorphism classes of simple $ W$-modules over $ \mathbb{C} $. Consider the two-sided cells defined in \cite{KL}, which are the equivalence classes of a relation ``$\sim_{LR}$'' on $W$. By \cite[\S20.2]{lusztig2003hecke} (or \cite[\S5.1]{lusztig1984char}), each simple $W$-module $E\in \irr(W)$ corresponds to a unique two-sided cell $\eff$ of $W$. In this case, Lusztig wrote $ E\sim_{LR} x $ for every $ x\in \eff $. By \cite[\S20.6]{lusztig2003hecke}, there is a function 
	\[
	\aff: \irr(W)\to\mathbb{N},\quad E\mapsto \aff_E,
	\]
 such that $\aff_E=\aff(x)$ whenever $E\sim_{LR} x$. In \cite[\S4.5]{lusztig1984char}, Lusztig defined a bijection 
	\[
	Symb':\irr(W)\to \Sigma,
	\]
	and proved that $\aff_E=\cff(Symb'(E))$ for every $E\in\irr(W)$. 
	
	On the other hand, it was shown in \cite[Prop. 17, Thm.18, Lem.29]{Barbasch-vogan1982classical} that $ E\sim_{LR}x $ if and only if $ Symb'(E) $ is a permutation of $ Symb(x) $. For each $x\in W$, take $ E\in\irr(W) $ such that  $ Symb'(E)=Symb(x) $. Then $E\sim_{LR}x$, and  
$ 	\aff(x)=\aff_E=\cff(Symb'(E))=\cff(Symb(x)). $
\end{proof}

%	\footnote{
%		(1) The terminology  ``$ E\sim_{LR} x $"  is expressed as ``$ E $ belongs to the two-sided cell that containing $ x $" in \cite{Barbasch-vogan1982classical} .\\
%		(2) See \cite[Thm.18(2)]{Barbasch-vogan1982classical} for the meaning of the permutation of a symbol.\\
%		(3) It seems that  \cite{Barbasch-vogan1982classical} does not consider $ W_1, W_1',W_2',W_3' $.  However, one can deal with the lower rank case of this proposition  in the following way. For $ n_1< n_2 $,  we have a natural embedding as parabolic subgroups $ W_{n_1}\subset W_{n_2} $. By proving that $ \aff $ and $ \bff $  on $ W_{n_1} $ are both restriction from those on $ W_{n_2} $ one can see that if $ \aff=\bff $ on $ W_{n_2} $, then so does on $ W_{n_1} $.
%	}

\begin{Prop}\label{prop:bB}
	Use notation in \S 1.3 and \S2.6. For $w\in W_n$,
\begin{equation*}
\aff(w)
=\sum_{ k\geq 1}  (k-1)p_k\od=F_b({}^-\vv{w}),
\end{equation*}
where $p=p({}^-\vv{w})=(p_1, p_2, \cdots)$.
\end{Prop}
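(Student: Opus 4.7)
The plan is to combine Proposition \ref{prop:afB} with a direct computation on symbols. By Proposition \ref{prop:afB}, $\aff(w) = \cff_B(Symb_B(w))$, and the symbol $\Lambda = Symb_B(w)$ depends on $w$ only through $p := p({}^-\vv w)$. Thus the claim reduces to the purely combinatorial identity $\cff_B(\Lambda) = \sum_{k\geq 1}(k-1)p_k\od$ whenever $\Lambda$ and $p$ are related by \eqref{eq:symb}.

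The strategy is to re-express $2\cff_B(\Lambda)$ as a single weighted sum in $\beta_k := p_k + 2m+1-k$, $k = 1,\ldots,2m+1$. Setting $v_i = 2\lambda_i$ and $\tilde w_j = 2\mu_j$, the three minimum-sums in the definition of $\cff_B$ can be combined via
\begin{equation*}
\sum_{i,j}\min(v_i,\tilde w_j) = \sum_i v_i \cdot \#\{j : \tilde w_j > v_i\} + \sum_j \tilde w_j \cdot \#\{i : v_i > \tilde w_j\} + T,
\end{equation*}
where $T$ accounts for the coincidences $v_i = \tilde w_j$. Matching the two "rank" factors with the positions of $v_i$ and $\tilde w_j$ in the weakly decreasing merge of $\{v_i\}\cup\{\tilde w_j\}$ (with ties broken so that a value coming from an odd $\beta$ precedes the equal value from an even $\beta$) then yields
\begin{equation*}
2\cff_B(\Lambda) + \tfrac{1}{3}m(m-1)(4m+1) = \sum_{l=1}^{2m+1}(l-1)\alpha_l, \qquad \alpha_l := \beta_l - [\beta_l\text{ odd}],
\end{equation*}
the merged sequence being precisely $(\alpha_1,\ldots,\alpha_{2m+1})$ under this convention.

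Substituting $\beta_l = p_l + 2m+1-l$ and using the elementary sum $\sum_{l=1}^{2m+1}(l-1)(2m+1-l) = \tfrac{1}{3}m(2m-1)(2m+1)$ reduces the identity to
\begin{equation*}
2\cff_B(\Lambda) = \sum_k (k-1)p_k + m^2 - \sum_{l\,:\,\beta_l\,\text{odd}}(l-1).
\end{equation*}
To match this with $2\sum_k (k-1)p_k\od$, I would invoke the parity formula $p_k\ev - p_k\od = (-1)^{k-1}$ when $p_k$ is odd (and $0$ otherwise) from \eqref{eq:ev-od}, split both sides according to the parities of $k$ and $p_k$, and use that $\beta_l$ is odd precisely when $p_l \equiv l \pmod{2}$. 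Systematic cancellation reduces the claim to the elementary identity $m^2 = 1+3+\cdots+(2m-1)$. The main obstacle is the combinatorial bookkeeping in the middle step, particularly the careful treatment of coincidences $v_i = \tilde w_j$; once that is handled, everything else is routine.
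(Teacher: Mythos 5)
Your proposal is correct and follows essentially the same route as the paper: both reduce via Proposition \ref{prop:afB} and \eqref{eq:symb} to the observation that the merged, weakly decreasing multiset of symbol entries has pairwise min-sum $\sum_{l}(l-1)\alpha_l$, and then conclude by elementary arithmetic. The only difference is in the closing bookkeeping: the paper finishes in one line using $\lfloor\beta_k/2\rfloor=p_k\od+\left\lfloor\frac{2m+1-k}{2}\right\rfloor$, whereas you double everything and do a parity split at the end, and your concern about ties $v_i=\tilde w_j$ is harmless since the sorted min-sum identity does not depend on how equal values are ordered.
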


\begin{proof}
	Assume that $p_{2m+2}=0$ and
	\[
	Symb_B(w)=\begin{pmatrix}
	\lambda_1~\lambda_2~\cdots~\lambda_{m+1}\\\mu_1~\mu_2~\cdots\mu_m
	\end{pmatrix} .
	\]
	Let $ \alpha_k=\left\lfloor \frac{p_{k}+2m+1-k  }{2}\right\rfloor $ for $1\leq k\leq 2m+1$. In view of \eqref{eq:symb},
	\[
	\{\lambda_i,\mu_j \mid  1\leq  i\leq m+1, 1\leq j \leq m \}  = \left\{ \alpha_k\mid 1\leq k\leq 2m+1\right \} .
	\]
It follows from Proposition \ref{prop:afB} that
\begin{equation*}
 \aff(w) = \sum_{1\leq k_1<k_2\leq 2m+1} \min \left\{\alpha_{k_1},\alpha_{k_2}\right\}- \frac{1}{6}m(m-1)(4m+1).
\end{equation*}
Since  $ \alpha_k$ is  decreasing with respect to $k$,
\[
\aff(w) =
\sum_{1\leq k\leq 2m+1} (k-1)\alpha_k- \frac{1}{6}m(m-1)(4m+1).
\]
By \eqref{eq:ev-od}, one has $ \alpha_k=p_k\od+ \left\lfloor \frac{2m+1-k  }{2}\right\rfloor $. Then the lemma follows from
\[
\sum_{ k=1}^{ 2m+1} (k-1)\left\lfloor \frac{2m+1-k  }{2}\right\rfloor=\sum_{i=1}^{ m-1} (m-i)(4i-1)= \frac{1}{6}m(m-1)(4m+1).
\]
\end{proof}

\begin{Rem}
One can easily see that $\aff(w)$ is independent of $m$, as long as $p_{2m+2}=0$, which can be easily achieved if we set for example $m=n$.	
\end{Rem}

\subsection{The system  $D_n$ and $D$-symbols}\label{subsec:dn}

In this case, $W=W_n'$, which consists of those elements $ w$ of  $W_ n$  such that  the number of negative integers in $ \{w(1), w(2), \cdots, w(n) \}   $ is even.  Let $ u=ts_1t $ and $ S'_n=\{u,s_1,s_2,\cdots,$  $s_{n-1}\} $. Thus $ (W_n',S'_n) $ is a Coxeter system. If $ n=1 $, then $ W_n' $ is trivial; if $ n=2 $ (resp. $ n=3 $, $ n\geq 4$), $ W_n' $ is a Weyl group of type $ A_1\times A_1 $ (resp. $ A_3 $, $ D_n $).  For convenience, we set $ D_2=A_1\times A_1 $, $ D_3=A_3 $.
%For convenience we regard $ D_3$ (resp. $ D_2 $) as $ A_3 $ (resp. $ A_1\times A_1 $).

%As the case of $ W_n $, one can write down a reduced expression from the reduced string diagram. For example, a reduced expression of the element in \eqref{eq:ex-b4} (viewed as an element of  $ W_4' $), can be obtained from the diagram: $ s_1s_2us_2s_3s_2 $. Then the following lemma follows immediately.
%\begin{Lem}
%The length function of $ (W_n',S_2) $ is given by the restriction of the function $ l_2 $ in \eqref{eq:length} on $ W_n' $.
%\end{Lem}
%By this lemma, one can verify that the longest element $ w_0' $ of $ W_n' $ is the element $ w $ such that $ w(i)=-i $ for $ i\geq 2 $ and $ w(1)=-(-1)^{n} $ and its length is $ n^2-n $.

Here we need Lusztig's $D$-symbols \cite{lusztig1977symbol}. Let 
\[ \begin{pmatrix}
\lambda_1~\lambda_2~\cdots~\lambda_{m}\\\mu_1~\mu_2~\cdots~\mu_m
\end{pmatrix}, m\geq 0\]
be a tableau of nonnegative integers such that entries in each row are  strictly increasing. Define an equivalence relation on the set of all such tableaux by
\begin{equation*}\label{eq:equiv-d}
\begin{pmatrix}
	\mu_1~\mu_2~\cdots\mu_m\\
	\lambda_1~\lambda_2~\cdots~\lambda_{m}
\end{pmatrix} \sim
\begin{pmatrix}
	\lambda_1~\lambda_2~\cdots~\lambda_{m}\\\mu_1~\mu_2~\cdots\mu_m
\end{pmatrix}
\sim
\begin{pmatrix}
0~\lambda_1+1~\lambda_2+1~\cdots~\lambda_{m}+1\\0~\mu_1+1~\mu_2+1~\cdots\mu_m+1
\end{pmatrix} .
\end{equation*}
Denote by $ \Sigma_D $ the set of equivalence classes under this relation $ \sim $. Use the same notation $ \Lambda=\begin{pmatrix}
\lambda_1~\lambda_2~\cdots~\lambda_{m}\\\mu_1~\mu_2~\cdots\mu_m
\end{pmatrix}  \in\Sigma_D$ to denote its equivalence class,  called a \textit{$ D$-symbol}. Define a map
\begin{equation}\label{eq:mapbd}
d:\Sigma_B\to \Sigma_D ,
\end{equation}
\[
\begin{pmatrix}
\lambda_1~\lambda_2~\cdots~\lambda_{m+1}\\\mu_1~\mu_2~\cdots\mu_m
\end{pmatrix}  \mapsto \begin{pmatrix}
\lambda_1&\lambda_2&\cdots&\lambda_{m+1}\\0 &\mu_1+1&\cdots&\mu_m+1
\end{pmatrix}
\]
and a function \begin{equation}\label{eq:af-symb-d}
\cff_D:\Sigma_D\to \mathbb{N}
\end{equation}
given by
\begin{multline*}
\cff_D(\Lambda)=\sum_{1\leq i<j\leq m}\min\{\lambda_i,\lambda_j\}+\sum_{1\leq i<j\leq m}\min\{\mu_i,\mu_j\}\\
+\sum_{1\leq i,j\leq m}\min\{\lambda_i,\mu_j\}-\frac16 m(m-1)(4m-5),
\end{multline*}
where $ \Lambda=\begin{pmatrix}
\lambda_1~\lambda_2~\cdots~\lambda_{m}\\\mu_1~\mu_2~\cdots\mu_m
\end{pmatrix}$. 
%The sum of the first three terms can be computed using \eqref{eq:compute}.

Let \begin{equation}\label{eq:mapsymb-d}
Symb_D: W'_n\to \Sigma_D
\end{equation}
be  the restriction to $ W_n' $ of the composition $ d\circ Symb_B $ of maps from  \eqref{eq:mapsymb} and \eqref{eq:mapbd}.

\begin{Prop}\label{prop:afD}
	The $ \aff $-function on $W'_n$ is the composition $ \cff _D\circ Symb_D$ of maps from \eqref{eq:af-symb-d} and \eqref{eq:mapsymb-d}, that is, $\aff(w)=\cff _D\circ Symb_D(w)$ for $w\in W'_n$. 
\end{Prop}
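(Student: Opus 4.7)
The plan is to mirror the proof of Proposition \ref{prop:afB}, replacing $W_n$, $B$-symbols, and $Symb_B$ by $W_n'$, $D$-symbols, and $Symb_D$ throughout. Write $W=W_n'$, $\cff=\cff_D$, $\Sigma=\Sigma_D$, and $Symb=Symb_D$. The argument will rest on three ingredients, each the direct type-$D$ analogue of what was used for type $B$: first, the general compatibility \cite[\S20.6]{lusztig2003hecke} between the $\aff$-function on $\operatorname{Irr}(W)$ and the $\aff$-function on $W$, namely $\aff_E=\aff(x)$ whenever $E\sim_{LR}x$; second, Lusztig's symbol parametrization \cite[\S4.5]{lusztig1984char} of $\operatorname{Irr}(W)$ by $D$-symbols, which furnishes a map $Symb':\operatorname{Irr}(W)\to\Sigma$ together with the equality $\aff_E=\cff(Symb'(E))$; and third, the Barbasch--Vogan identification \cite[Prop.~17, Thm.~18, Lem.~29]{Barbasch-vogan1982classical}, which states that for $x\in W$, $E\sim_{LR}x$ if and only if $Symb'(E)$ is a permutation of $Symb(x)$, so that they represent the same class in $\Sigma_D$.

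Granted these three ingredients, the proof is immediate: for any $x\in W$, I would pick $E\in\operatorname{Irr}(W)$ with $Symb'(E)=Symb(x)$; this forces $E\sim_{LR}x$, and hence
\[
\aff(x)=\aff_E=\cff(Symb'(E))=\cff(Symb(x)),
\]
which is the asserted identity.

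The one subtlety, and the chief thing to be careful about, concerns the \emph{degenerate} $D$-symbols, those of the form $\begin{pmatrix}\lambda_1\,\cdots\,\lambda_m\\\lambda_1\,\cdots\,\lambda_m\end{pmatrix}$. Each such symbol is the image under $Symb'$ of two distinct simple $W$-modules (a manifestation of the fact that certain type-$B$ irreducibles split upon restriction to $W_n'$). However, $\cff_D$ is a function on $\Sigma_D$ rather than on $\operatorname{Irr}(W)$, and both of these simple modules share the same $\aff$-value and lie above the same symbol. Consequently the choice of $E$ above is inessential and the final identity still holds. A separate verification that $Symb_D$ itself is well-defined on $W'_n$ (in particular, that the composition $d\circ Symb_B$ lands in $\Sigma_D$, respecting the row-swap equivalence and the zero-prefix equivalence) will need to be recorded, but this is already built into the definition of $Symb_D$ via \eqref{eq:mapsymb-d} together with the definition of $d$ in \eqref{eq:mapbd}.
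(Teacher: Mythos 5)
Your proposal is correct and takes essentially the same route as the paper, whose entire proof of this proposition is the instruction to imitate the proof of Proposition \ref{prop:afB}, i.e.\ exactly the three ingredients you assemble. Your additional remark on degenerate $D$-symbols (two simple $W_n'$-modules sharing one symbol but the same $\aff$-value, so the choice of $E$ is immaterial) is a useful point that the paper leaves implicit.
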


\begin{proof}
	We can simply imitate the proof of Proposition \ref{prop:afB}.
\end{proof}

\begin{Prop}\label{prop:bd}
	Use notation in \S1.3 and \S2.6. For $w\in W'_n$,
\begin{equation*}%\label{key}
\aff(w)=\sum_{k\geq1}(k-1)p_k\ev=F_d({}^-\vv{w}),
\end{equation*}
where $p=p({}^-\vv{w})=(p_1, p_2, \cdots)$.
\end{Prop}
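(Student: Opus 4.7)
The plan is to adapt the argument of Proposition \ref{prop:bB}, replacing Proposition \ref{prop:afB} with Proposition \ref{prop:afD}. If we write
\[
Symb_B(w)=\begin{pmatrix}\lambda_1~\lambda_2~\cdots~\lambda_{m+1}\\\mu_1~\mu_2~\cdots~\mu_m\end{pmatrix}
\]
with $p=p({}^-\vv{w})$ and $p_{2m+2}=0$, then
\[
Symb_D(w)=d(Symb_B(w))=\begin{pmatrix}\lambda_1&\lambda_2&\cdots&\lambda_{m+1}\\0&\mu_1+1&\cdots&\mu_m+1\end{pmatrix}
\]
is a $D$-symbol with $m+1$ entries per row. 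The first observation is that the three sums in the definition of $\cff_D$ jointly count each of the $\binom{2m+2}{2}$ unordered pairs of entries of the $D$-symbol exactly once, so applying Proposition \ref{prop:afD} yields $\aff(w)=\sum_{\{a,b\}}\min\{a,b\}-\tfrac{1}{6}m(m+1)(4m-1)$.

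In place of the floor used in the type $B$ argument, I would introduce the ceiling
\[
\beta_k:=\left\lceil\frac{p_k+2m+1-k}{2}\right\rceil,\qquad 1\leq k\leq 2m+1.
\]
By \eqref{eq:symb}, $\beta_k$ equals some $\lambda_i$ when $p_k+2m+1-k$ is even and equals some $\mu_j+1$ when it is odd, so the multiset of entries of $Symb_D(w)$ is exactly $\{0\}\cup\{\beta_1,\ldots,\beta_{2m+1}\}$. A short case analysis on the parities of $k$ and $p_k$, based on \eqref{eq:ev-od}, gives the clean split
\[
\beta_k=p_k\ev+\left\lceil\frac{2m+1-k}{2}\right\rceil,
\]
and a similar parity check confirms that $\beta_1\geq\beta_2\geq\cdots\geq\beta_{2m+1}\geq 0$.

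Applying the elementary identity $\sum_{i<j}\min\{y_i,y_j\}=\sum_{j\geq 1}(j-1)y_j$ to the sorted sequence $\beta_1\geq\cdots\geq\beta_{2m+1}\geq 0$ (the extra $0$ contributing nothing) then yields
\[
\aff(w)=\sum_{k=1}^{2m+1}(k-1)\beta_k-\tfrac{1}{6}m(m+1)(4m-1).
\]
Substituting the clean split separates the desired main term $\sum_{k\geq 1}(k-1)p_k\ev=F_d({}^-\vv{w})$ from the residual sum $\sum_{k=1}^{2m+1}(k-1)\lceil(2m+1-k)/2\rceil$.

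The last step is the combinatorial identity
\[
\sum_{k=1}^{2m+1}(k-1)\left\lceil\frac{2m+1-k}{2}\right\rceil=\tfrac{1}{6}m(m+1)(4m-1),
\]
which I would verify by substituting $j=2m+1-k$ and evaluating the two arithmetic-progression sums obtained by splitting according to the parity of $j$; this produces exactly the constant required to cancel the correction term. The main obstacle is purely bookkeeping: handling the ceiling-versus-floor switch to match $p_k\ev$ instead of $p_k\od$, and pinning down the scalar cancellation by parity case analysis. No conceptually new input beyond the proof of Proposition \ref{prop:bB} is required.
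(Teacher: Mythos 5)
Your proposal is correct and takes essentially the same route as the paper's proof: both reduce to Proposition \ref{prop:afD}, identify the multiset of symbol entries with quantities of the form $p_k\ev$ plus a staircase term, rewrite the pairwise-minimum sum as $\sum_k(k-1)\beta_k$, and finish with an elementary arithmetic identity that cancels the constant. The only difference is bookkeeping: the paper normalizes $p_{2m+1}=0$ and uses the equivalence relation to pass to the reduced $m$-column $D$-symbol (floors, constant $\frac16 m(m-1)(4m-5)$), whereas you keep the $(m+1)$-column representative with the extra zero entry (ceilings, constant $\frac16 m(m+1)(4m-1)$), relying on $\cff_D$ being well defined on $\Sigma_D$ — and all of your intermediate identities check out.
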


\begin{proof}

Assume that $p_{2m+1}=0$ and
\[
Symb_B(w)= \begin{pmatrix}
	\lambda_1~\lambda_2~\cdots~\lambda_{m+1}\\\mu_1~\mu_2~\cdots\mu_m
\end{pmatrix}.
\]
We have
\[
\{2\lambda_i,2 \mu_j +1\mid   i\leq m+1,  j \leq m \}  =\{0\}\cup \{ p_k+2m+1-k \mid  k\leq 2m \}. 
\]
Thus $ \lambda_1=0 $  and $ \lambda_i\geq 1 $ for $ i>1$. The equivalence relation yields 
\[
Symb_D(w)= \begin{pmatrix}
\lambda_1&\lambda_2&\cdots&\lambda_{m+1}\\0 &\mu_1+1&\cdots&\mu_m+1
\end{pmatrix}= \begin{pmatrix}
\lambda_2-1&\cdots&\lambda_{m+1}-1\\\mu_1&\cdots&\mu_m
\end{pmatrix}.
\]
If $ p_k+2m+1-k $ is even, then $ {p_k+2m+1-k} = 2\lambda_i$ for some $ i $, and hence $ \lambda_i-1=\left\lfloor \frac{p_k+2m-k}{2} \right\rfloor $. If $ p_k+2m+1-k $ is odd, then $ {p_k+2m+1-k} = 2\mu_j+1$ for some $ j $, and hence $ \mu_j= \left\lfloor \frac{p_k+2m-k}{2} \right\rfloor$. Therefore, 
\[
\{\lambda_i-1,\mu_j\mid 2\leq i\leq m+1,1\leq j\leq m \}=\left\{ \beta_{k}| \,1\leq k\leq 2m  \right \},
\]
where $ \beta_k=\left\lfloor \frac{p_k+2m-k}{2} \right\rfloor  $.
It follows from Proposition \ref{prop:afD} that 
  \begin{align*}
  \aff(w)&=\sum_{1\leq k_1<k_2\leq 2m} \min \left\{\beta_{k_1},\beta_{k_2} \right\} - \frac{1}{6}m(m-1)(4m-5)\\
  &=\sum_{k=1}^{2m}(k-1) \beta_k- \frac{1}{6}m(m-1)(4m-5).
  \end{align*} 
With \eqref{eq:ev-od}, one has $ \beta_k=p_k\ev+ \left\lfloor \frac{2m-k  }{2}\right\rfloor $. A quick calculation yields the lemma.
\end{proof}

%
%%%%%%%%%%%%%%%%%%%%%%%%%%%%%%%%%%%%%%%%%%%%%%%%%%%%%%%%%%%%%%%%%%%%
%

\section{Combinatorics of Dominoes}\label{sec:dom}

%
%%%%%%%%%%%%%%%%%%%%%%%%%%%%%%%%%%%%%%%%%%%%%%%%%%%%%%%%%%%%%%%%%%%%
%

In this section, we introduce a new invariant, the hollow tableau. It will be used to prove a key result (Theorem \ref{thm:hollow}), which is necessary in the proof of Theorem \ref{thm2}. First we need to recall the domino insertion algorithm of Garfinkle \cite{Garfinkle1}.

\subsection{Domino insertion}

A \textit{domino} is a skew shape consisting of two adjacent boxes. 
A \textit{(standard) domino tableau} $D$ consists of a tiling of the Young diagram by dominoes and a filling of each domino with a positive integer, such that the numbers are strictly increasing along each row and column. If  $ D $ has shape $ p $, we write $ p=\sh(D) $. We say $D$ is a \textit{skew} domino tableau if $D=Y/Z$ for two domino tableaux $Z\subset Y$.

\begin{ex}
There are 3 possible domino tableaux of shape $ (4,2) $ filled with $ \{1,2,3\} $:\[
\begin{tikzpicture}[scale=\domscale,baseline=-15pt]
\hdom{0}{0}{1}
\hdom{2}{0}{2}
\hdom{0}{1}{3}
\end{tikzpicture} 
\quad
\begin{tikzpicture}[scale=\domscale,baseline=-15pt]
\hdom{0}{0}{1}
\hdom{2}{0}{3}
\hdom{0}{1}{2}
\end{tikzpicture} 
\quad
\begin{tikzpicture}[scale=\domscale,baseline=-15pt]
\vdom{0}{0}{1}
\vdom{1}{0}{2}
\hdom{2}{0}{3}
\end{tikzpicture} 
\]
\end{ex}

For any positive integer $j$, denote by $D_{\leq j}$ (resp. $D_{\geq j}$) the sub-tableau of $D$ which consists of those dominoes filled with integers less (resp. greater) than or equal to $j$. Obviously $D_{\leq j}$ is a  domino tableau. If $j$  occurs in $D$, we write $\dom_j(D)$ for the domino in $ D $ filled with  $j$. 

Now recall the domino insertion algorithm, which was introduced by Garfinkle \cite{Garfinkle1}. Here we follow the  description in \cite[\S3.2]{bonnafe-geck-iancu-lam2010}.

\begin{Defff}[Domino insertion algorithm]\label{def:dom}
Let $D$ be a domino tableau with shape $p$ and $i$ be a positive integer which does not appear in $D$. Now we define a new domino tableau $E=D\leftarrow ci$  with $c=\pm1$ as follows.

\begin{itemize}
	\item [(1)] Let $E_{\leq i}$ be the domino tableau obtained from $D_{\leq i-1}$ by adding a horizontal (resp. vertical) domino with value $i$ in the first row (resp. column) when $c=1$ (resp. $c=-1$).
	
	\item [(2)] For $ j> i $, assume that we have obtained $ E_{\leq j-1} $ with shape $ q_{j-1} $. If $D$ contains no domino labeled $j$, then set $E_{\leq j}=E_{\leq j-1}$, otherwise $ E_{\leq j} $ is a domino tableau obtained from $ E_{\leq j-1} $ by bumping the domino $ \dom_j $ in $ D $ occupied by $ j $ in the following way.
	\begin{itemize}
		\item [(i)] If $ q_{j-1}\cap   \dom_j =\emptyset$, then $ E_{\leq j} =E_{\leq j-1}\cup \dom_j   $.
		
		\item [(ii)] If $q_{j-1}\cap\dom_j$ is precisely a box $ (k,l) $, then
		we add a domino containing $j$ to $E_{\leq j-1}$ to obtain  $E_{\leq j}$ which has
		shape $q_{j-1}\cup\dom_j\cup(k+1, l+1)$.
		
		\item [(iii)] If $q_{j-1}\cap\dom_j=\dom_j$ and $\dom_j$ is horizontal (resp. vertical), then we bump the domino $\dom_j$ to the next row (resp. column), and set $E_{\leq j}$ to be the union of $E_{\leq j-1}$ with a horizontal (resp. vertical) domino with value $j$ adding in the next row (resp. column) of $\dom_j$.
	\end{itemize}
\item [(3)] Repeat  (2) until $ D_{\geq j+1} $ is empty, then set $ E=E_{\leq j} $.
\end{itemize}
\end{Defff}

\begin{ex}\label{ex:dom-insertion}
	Let $ E= D\leftarrow -1$ with \[
D=
	\begin{tikzpicture}[scale=\domscale,baseline=-25pt]
	\hdom{0}{0}{2}
	\hdom{2}{0}{5}
	\vdom{1}{1}{4}
	\vdom{0}{1}{3}
\hdom{2}{1}{6}
	\end{tikzpicture} .
	\]
	Then we have 
	
	\[
\begin{tikzpicture}[scale=\domscale,baseline=-25pt]
	\vdom{0}{0}{1}
\end{tikzpicture} 
\to
\begin{tikzpicture}[scale=\domscale,baseline=-25pt]
	\vdom{0}{0}{1}
	\vdom{1}{0}{2}
	\end{tikzpicture} 
\to
\begin{tikzpicture}[scale=\domscale,baseline=-25pt]
\vdom{0}{0}{1}
\vdom{1}{0}{2}
	\hdom{0}{2}{3}
\end{tikzpicture} 
\to
\begin{tikzpicture}[scale=\domscale,baseline=-25pt]
\vdom{0}{0}{1}
\vdom{1}{0}{2}
\hdom{0}{2}{3}
	\vdom{2}{0}{4}
\end{tikzpicture} 
\to
\begin{tikzpicture}[scale=\domscale,baseline=-25pt]
\vdom{0}{0}{1}
\vdom{1}{0}{2}
\hdom{0}{2}{3}
\vdom{2}{0}{4}
\vdom{3}{0}{5}
\end{tikzpicture}
\to
\begin{tikzpicture}[scale=\domscale,baseline=-25pt]
\vdom{0}{0}{1}
\vdom{1}{0}{2}
\hdom{0}{2}{3}
\vdom{2}{0}{4}
\vdom{3}{0}{5}
\hdom{2}{2}{6}
\end{tikzpicture}=E .
	\]
\end{ex}

\newcommand{\laa}{\leftarrow}

For $w\in W_n$, define a domino tableau as
\begin{equation*}
	P(w):=((\cdots((\emptyset\laa w(1))\laa w(2))\cdots)\laa w(n)).
\end{equation*}
The sequence of shapes obtained in the process gives rise to another  domino tableau $Q(w)$ which is called  the \textit{recording tableau}. Let $x=(x_1,x_2,\cdots x_n)$ be a sequence of nonzero integers with distinct absolute values. By a little abuse of notation, we still denote by $P(x)$ the domino tableau corresponding to the sequence $x$ and by $Q(x)$ the recording domino tableau.

%For $w\in W_n$, define a domino tableau as
%\begin{equation}\label{dweq1}
%P(w):=((\cdots((\emptyset\laa w(1))\laa w(2))\cdots)\laa w(n)).
%\end{equation}
%The above insertions naturally  give rise to another  domino tableau $Q(w)$ which is called  the \textit{recording tableau}.
%In general, for any sequence $ x=(x_1,x_2,\cdots x_n) $ such that  $ x_i $'s belong to  $\in \mathbb{Z}\setminus\{0\}$ and have distinct absolute values, we have  domino tableaux  $ P(x)$ and $ Q(x) $. 

\newcommand{\ra}{\to}

\begin{ex}\label{dex1}
	Let  $\vv w=(-3, -4, 1, 5, -2)\in W_5$. Then 
	\[
	\begin{tikzpicture}[scale=\domscale,baseline=-25pt]
	\vdom{0}{0}{3}
	\end{tikzpicture} 
	\ra
	\begin{tikzpicture}[scale=\domscale,baseline=-25pt]
	\vdom{0}{0}{3}
	\vdom{0}{2}{4}
	\end{tikzpicture} 
	\ra
	\begin{tikzpicture}[scale=\domscale,baseline=-25pt]
	\hdom{0}{0}{1}
	\hdom{0}{1}{3}
	\vdom{0}{2}{4}
	\end{tikzpicture} 
	\ra
	\begin{tikzpicture}[scale=\domscale,baseline=-25pt]
	\hdom{0}{0}{1}
	\hdom{0}{1}{3}
	\vdom{0}{2}{4}
	\hdom{2}{0}{5}
	\end{tikzpicture} 
	\ra
	\begin{tikzpicture}[scale=\domscale,baseline=-25pt]
	\hdom{0}{0}{1}
	\vdom{0}{1}{2}
	\vdom{1}{1}{3}
	\hdom{0}{3}{4}
	\hdom{2}{0}{5}
	\end{tikzpicture}=P(w),
	\]
	\bigskip	
	\[
	\begin{tikzpicture}[scale=\domscale,baseline=-25pt]
	\vdom{0}{0}{1}
	\end{tikzpicture} 
	\ra
	\begin{tikzpicture}[scale=\domscale,baseline=-25pt]
	\vdom{0}{0}{1}
	\vdom{0}{2}{2}
	\end{tikzpicture} 
	\ra
	\begin{tikzpicture}[scale=\domscale,baseline=-25pt]
	\vdom{0}{0}{1}
	\vdom{0}{2}{2}
	\vdom{1}{0}{3}
	\end{tikzpicture} 
	\ra
	\begin{tikzpicture}[scale=\domscale,baseline=-25pt]
	\vdom{0}{0}{1}
	\vdom{0}{2}{2}
	\vdom{1}{0}{3}
	\hdom{2}{0}{4}
	\end{tikzpicture} 
	\ra
	\begin{tikzpicture}[scale=\domscale,baseline=-25pt]
	\vdom{0}{0}{1}
	\vdom{0}{2}{2}
	\vdom{1}{0}{3}
	\hdom{2}{0}{4}
	\vdom{1}{2}{5}
	\end{tikzpicture}=Q(w).
	\]	
\end{ex}

The following result, which is due to Garfinkle \cite{Garfinkle1}, is a generalization of the Robinson–Schensted correspondence (e.g., \cite[Thm. 3.1.1]{Sagan}).

\begin{Thm}\label{dthmis}
	The assignment $ w\mapsto (P(w), Q(w)) $ gives rise to a bijection between $W_n$ and the set of pairs of  domino tableaux of the same shapes filled with $ \{1,2,\cdots,n\} $.
	 Moreover, 
	 \begin{equation}\label{eq:pqinv}
	 P(w)=Q(w^{-1}).
	 \end{equation}
\end{Thm}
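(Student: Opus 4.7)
The plan is to follow the template of Schensted's proof of the classical Robinson--Schensted correspondence, adapted to dominoes. Bijectivity of $w\mapsto (P(w),Q(w))$ will be established by constructing an explicit reverse insertion, and the symmetry $P(w)=Q(w^{-1})$ will be extracted from a symmetric description of the pair via growth diagrams.

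For bijectivity, I first observe that $(P(w), Q(w))$ is a well-defined pair of same-shape domino tableaux filled with $\{1, \ldots, n\}$: by the very definition of the recording tableau, $Q(w)$ marks the domino added to the shape at step $k$, so $\sh(P(w)_{\leq k})=\sh(Q(w)_{\leq k})$ for every $k$. To invert, given a pair $(P,Q)$ of such tableaux, locate the domino of $Q$ labelled $n$; this marks a corner domino of the common shape. I would then reverse each of the three bumping cases (i)--(iii) of Definition~\ref{def:dom}: case~(i) detaches an isolated bumped domino; case~(ii) reverses a corner creation by peeling off the added corner square and moving the bumped domino up; case~(iii) reverses a row (resp.\ column) bump by relocating the horizontal (resp.\ vertical) domino one step upward (resp.\ leftward). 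Iterating from the location of $n$ back to the first row or column of $P$ extracts a signed integer $c\cdot i$ (with sign recorded by the orientation of the ejected domino) equal to $w(n)$, and leaves behind a domino tableau representing $w(1),\ldots,w(n-1)$. Induction on $n$ completes the construction of the two-sided inverse, once one checks that the reverse step is the unique inversion of each forward case.

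For the symmetry $P(w)=Q(w^{-1})$, I would adapt Fomin's growth-diagram construction to the domino setting. Encode $w\in W_n$ as an $n\times n$ signed permutation matrix and attach to each lattice point $(i,j)$, $0\leq i,j\leq n$, the shape of the partial insertion tableau obtained from the submatrix indexed by $[1,i]\times[1,j]$. The domino insertion rule of Definition~\ref{def:dom} translates into a local growth rule on each unit square, expressing the shape at $(i,j)$ in terms of the three neighbouring shapes and the matrix entry (which is $0$ or $\pm 1$). The tableau $P(w)$ (resp.\ $Q(w)$) is then read off along the right (resp.\ top) boundary. Symmetry of the local rule under reflection across the main diagonal, together with the fact that this reflection implements $w\mapsto w^{-1}$, forces $P(w)=Q(w^{-1})$.

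The main obstacle is verifying symmetry of the local growth rule: the domino bumping has three distinct cases depending on the shape of $q_{j-1}\cap \dom_j$, yielding a substantially larger list of local configurations than in classical RSK, and each must be shown to commute with transposition. A conceptually cleaner but less self-contained alternative is to use the $2$-quotient map, which decomposes a domino tableau of empty $2$-core into a pair of ordinary standard Young tableaux; the symmetry claim then reduces to classical RSK symmetry on each quotient component, provided one shows that the domino insertion of Definition~\ref{def:dom} respects the quotient decomposition -- itself a nontrivial verification. I would pursue whichever of the two approaches produces the cleaner case analysis.
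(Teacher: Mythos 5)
You should first note that the paper does not prove Theorem \ref{dthmis} at all: it is quoted as a known result of Garfinkle \cite{Garfinkle1}, so there is no in-paper argument to compare with. Judged on its own, your proposal is a reasonable outline of the two known routes (Garfinkle's explicit inverse algorithm for bijectivity; growth diagrams or the $2$-quotient for the symmetry \eqref{eq:pqinv}), but as written it has genuine gaps rather than being a proof. For bijectivity, the crux is exactly the point you defer: showing that each forward step of Definition \ref{def:dom} has a \emph{unique, well-defined} reversal. Case (ii) is the delicate one, since there the domino containing $j$ in $E_{\leq j}$ is ``bent'' relative to $\dom_j(D)$ (a horizontal domino can re-emerge as a vertical one occupying the freed box together with the new corner $(k+1,l+1)$), and the reverse procedure must decide from $(P,Q)$ alone which of the cases (i)--(iii) produced each domino and recover the orientation of the bumped domino in the previous tableau. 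Your sketch asserts this is possible but does not carry out the case analysis, and that analysis is the actual content of Garfinkle's bijection proof.

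The symmetry statement \eqref{eq:pqinv} is in worse shape: you offer two alternative strategies and explicitly postpone the essential verification in each. In the growth-diagram route, the entire theorem is concentrated in the claim that the local rule induced by Definition \ref{def:dom} is invariant under transposition of the square; without writing down the domino local rules and checking their diagonal symmetry (this is what van Leeuwen does), nothing has been proved. In the $2$-quotient route, the deferred step --- that Garfinkle's bumping insertion is compatible with the quotient decomposition, equivalently that it agrees with the Barbasch--Vogan left-right insertion of \cite{Barbasch-vogan1982classical} --- is precisely the nontrivial theorem of \cite{Leeuwen-elementary} that this paper cites separately, so invoking it without proof does not yield a self-contained argument. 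In short, the plan points at the correct literature-backed strategies, but both halves of the theorem (uniqueness of reverse insertion, and the symmetry of the local rules or the quotient compatibility) are named as obstacles and left unresolved, so the proposal does not yet constitute a proof.
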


The following result shows that the $\aff$-functions can also be calculated by domino algorithm (keeping in mind of Theorem \ref{thm1}).

\begin{Prop}\label{prop:shape}
	Let $w\in W_n$. Then $p({}^-\vv{w})=\sh(P(w))=\sh(Q(w))$.
\end{Prop}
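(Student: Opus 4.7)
The plan is to establish the two equalities $\sh(P(w)) = \sh(Q(w))$ and $\sh(P(w)) = p({}^-\vv{w})$ separately. The first is immediate from the definition of the recording tableau, while the second is a consequence of Van Leeuwen's comparison theorem \cite{Leeuwen-elementary} relating Garfinkle's domino bumping with the Robinson--Schensted left--right insertion studied by Barbasch--Vogan \cite{Barbasch-vogan1982classical}.

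First I would note that $\sh(P(w)) = \sh(Q(w))$ follows directly from Definition \ref{def:dom}. At each step $i$ of the procedure producing $P(w) = (\cdots((\emptyset \laa w(1)) \laa w(2)) \cdots \laa w(n))$, the recording tableau $Q$ is constructed so that its sub-tableau $Q_{\leq i}$ has exactly the shape of the intermediate domino tableau after the $i$-th insertion. Taking $i = n$ gives the equality.

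The second equality $\sh(P(w)) = p({}^-\vv{w})$ is proved by relating Garfinkle's insertion to the classical RS algorithm on the doubled sequence ${}^-\vv{w} = (-w(n),\ldots,-w(1),w(1),\ldots,w(n))$. The idea is that each Garfinkle one-step domino insertion of $w(i)$ should be matched with the two RS bumping steps corresponding to $-w(i)$ and $w(i)$ within ${}^-\vv{w}$. Barbasch--Vogan formalized this matching with a left--right insertion algorithm whose output has shape $p({}^-\vv{w})$ by construction, and Van Leeuwen proved that the resulting Young tableau has the same shape as Garfinkle's domino tableau $P(w)$. Combining these two facts yields $\sh(P(w)) = p({}^-\vv{w})$.

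The hard part is the per-step shape-matching between domino bumping and the pair of RS bumpings, which is the main content of Van Leeuwen's theorem. One must check that the two boxes added (or the cascade of box-rearrangements triggered) by a single domino insertion of $w(i)$ agree precisely with the combined effect of inserting $-w(i)$ and $w(i)$ via RS at the corresponding places in the doubled sequence. Once this compatibility is granted, induction on $n$ completes the proof. A minor technical point is the reconciliation of conventions between the left--right form of the Barbasch--Vogan insertion and our linear doubled sequence ${}^-\vv w$; this is handled by observing that the antisymmetric structure of ${}^-\vv w$ guarantees the two insertion orders produce the same final shape.
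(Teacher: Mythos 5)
Your argument is correct and follows essentially the same route as the paper, which offers no independent proof of this proposition: it notes $\sh(P(w))=\sh(Q(w))$ is built into the recording construction and attributes the identity $\sh(P(w))=p({}^-\vv{w})$ to Van Leeuwen's comparison \cite{Leeuwen-elementary} of Garfinkle's domino bumping with the Barbasch--Vogan left--right insertion \cite{Barbasch-vogan1982classical}. Your sketch of the per-step matching is exactly the content of the cited result, so nothing further is needed.
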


Although it had been observed in earlier work of Garfinkle \cite{Garfinkle1}, a complete proof of this proposition was given in \cite{Leeuwen-elementary}. 

%Combined with Propositions  \ref{prop:bB}, \ref{prop:bd},  the above proposition immediately implies the following corollary.
 
% \begin{Cor}
% 	The $ \aff $-function on $ W_n $ is given by \[
% 	\aff(w)=\sum_{k\geq 1} (k-1)p_k\od,
% 	\]
% 	where $ w\in W_n $ and $sh(P(w))= (r_1,r_2,\cdots) $.
% 	
% 		The $ \aff $-function on $ W_n '$ is given by \[
% 	\aff(w)=\sum_{k\geq 1} (k-1)p_k\ev,
% 	\]
% 	where $ w\in W_n' $ and $sh(P(w))= (r_1,r_2,\cdots)$.
% \end{Cor}

\subsection{Hollow tableaux} Now we introduce the hollow tableau.

\begin{Defff}
	By removing all the odd boxes from a domino tableau $ D $, we obtain a tableau $ \mc{H}(D) $  consisting of only even boxes and inheriting the filling from $ D $. We say $ \mc{H}(D) $ is the \textit{hollow tableau} of $ D $. 
%	 Note that $\mc{H}(D)$ is \emph{standard} since $D$ is standard.
\end{Defff}

\begin{ex}
	When $ D $ is the tableau in Example \ref{ex:dom-insertion}, we have
	\[
	D=
	\begin{tikzpicture}[scale=\domscale,baseline=-25pt]
		\hdom{0}{0}{2}
		\hdom{2}{0}{5}
		\vdom{1}{1}{4}
		\vdom{0}{1}{3}
		\hdom{2}{1}{6}
	\end{tikzpicture}
	\quad\ra\quad
	\begin{tikzpicture}[scale=\domscale,baseline=-25pt]
		\hobox{0}{0}{2}
		\hobox{2}{0}{5}
		\hobox{1}{1}{4}
		\hobox{3}{1}{6}
		\hobox{0}{2}{3}
	\end{tikzpicture}=\mathcal{H}(D).
	\]
	
\end{ex}

The following lemma follows from the definition.
\begin{Lem}\label{lem1:leftup}
	If $ (k,l) $ is a box filled in $ \mc{H}(D) $, then all the even boxes $ (a,b) $ are filled for all $ 1\leq a\leq k $, $ 1\leq b\leq l $
\end{Lem}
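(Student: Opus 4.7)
The plan is essentially a definition-unwinding, since this statement is really a direct consequence of the fact that $\sh(D)$ is a genuine Young diagram.

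First I would unpack what ``filled in $\mathcal{H}(D)$'' means. By the definition of the hollow tableau, a box $(k,l)$ is filled in $\mathcal{H}(D)$ precisely when $(k,l) \in \sh(D)$ and $k+l$ is even (so the box is even and survives the removal of odd boxes). Thus the hypothesis gives us $(k,l) \in \sh(D)$.

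Next I would invoke the Young diagram property. Because $D$ is a domino tableau, by Definition 4.1 its underlying shape $\sh(D)$ is a Young diagram in the usual sense (a left-justified arrangement of boxes with weakly decreasing row lengths). The standard staircase property of Young diagrams then gives $(a,b) \in \sh(D)$ for every $1 \leq a \leq k$ and $1 \leq b \leq l$. Restricting to those $(a,b)$ with $a+b$ even and applying the definition of $\mathcal{H}(D)$ again shows that each such even box is filled in $\mathcal{H}(D)$, which is exactly the claim.

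There is no real obstacle here, since no property of the domino tiling or the filling is used beyond the fact that $\sh(D)$ is a Young diagram. I would write the argument in two or three lines, just to fix the translation between ``filled in $\mathcal{H}(D)$'' and ``even box in $\sh(D)$,'' and to record that the staircase property of $\sh(D)$ is inherited from the definition of a domino tableau rather than from any insertion procedure. This lemma is then ready to be used in later arguments where it will presumably let us read off the shape of $\mathcal{H}(D)$ from its set of filled even boxes.
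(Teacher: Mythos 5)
Your proof is correct and matches the paper, which simply states that the lemma ``follows from the definition'': the filled boxes of $\mc{H}(D)$ are exactly the even boxes of $\sh(D)$, and since $\sh(D)$ is a Young diagram, every box weakly northwest of a box of $\sh(D)$ also lies in $\sh(D)$. Your write-up is just the explicit unwinding of that observation, so there is nothing to change.
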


This subsection is devoted to proving the following theorem, which is interesting in its own right and will play a significant role in determining the GK dimensions of highest weight modules for type $ D $.

% (see Lemma \ref{lem:lambda-w-dn})

\begin{Thm}\label{thm:hollow}
	Let $ w\in W_n $. Then \[
	\mc{H}(P(w))=	\mc{H}(P(tw))=\mc{H}(P(wt)), 
	\]
	\[
	 \mc{H}(Q(w))=	\mc{H}(Q(tw))=\mc{H}(Q(wt)).
	\]
	In particular, $ p({}^-\vv{w})\ev=p({}^-\vv{tw})\ev =p({}^-\vv{wt})\ev $.  
\end{Thm}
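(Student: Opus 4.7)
My plan is to first reduce the four tableau equalities in the theorem to two by using the identity $P(w) = Q(w^{-1})$ from Theorem \ref{dthmis}. Since $(wt)^{-1} = t w^{-1}$ and $(tw)^{-1} = w^{-1} t$, the $Q$-side equalities for $w$ are precisely the $P$-side equalities for $w^{-1}$, with $wt$ and $tw$ swapped. Thus it suffices to prove $\mathcal{H}(P(w)) = \mathcal{H}(P(wt))$ and $\mathcal{H}(P(w)) = \mathcal{H}(P(tw))$ for all $w \in W_n$.

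Next I would pinpoint how the sequences of insertions for $P(w)$, $P(wt)$, and $P(tw)$ differ. For $wt$, only the first insertion differs: $(wt)(1) = -w(1)$ while $(wt)(j) = w(j)$ for $j \geq 2$. For $tw$, only the insertion at step $j_0$ differs, where $j_0$ is the unique index with $|w(j_0)| = 1$. In both cases the local change is a sign flip of a single inserted value. This motivates a key local lemma: for any standard domino tableau $D$ and any positive integer $v$ not appearing in $D$,
\[
\mathcal{H}(D \leftarrow v) = \mathcal{H}(D \leftarrow -v).
\]
The proof is by case analysis through the cascade in Definition \ref{def:dom}. The base observation is that when a horizontal (respectively vertical) domino with label $v$ is appended to row~1 (respectively column~1) of $D_{\leq v-1}$, its even box lies at column $2 p_1^{\mathrm{ev}}(D_{\leq v-1}) + 1$ of row~1 (respectively row $2 q_1^{\mathrm{ev}}(D_{\leq v-1}) + 1$ of column~1), an expression depending only on the hollow tableau of $D_{\leq v-1}$. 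The subsequent bumps in cases (i)--(iii) for labels $j > v$ can then be tracked through the parity of $(k,l)$ or $(r,c)$, and shown to leave the hollow tableau unaffected by $\mathrm{sgn}(v)$.

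The main obstacle is persistence. After the single sign-flipping step, the two trajectories continue with identical insertions $w(j_0+1), \ldots, w(n)$ applied to two possibly distinct domino tableaux that are hollow-equivalent but may have different shapes. A naive extension of the local lemma to arbitrary hollow-equivalent pairs fails: the all-horizontal and all-vertical tilings of a $2 \times 2$ shape with labels $\{2, 3\}$ are hollow-equivalent, yet inserting $+1$ into them produces tableaux of shapes $(2,2,2)$ and $(3,3)$ whose hollow tableaux genuinely differ at label $3$. To overcome this, I would invoke van Leeuwen's theorem \cite{Leeuwen-elementary}, which reformulates Garfinkle's domino insertion in terms of Barbasch-Vogan's left-right insertion and yields $\sh(P(w)) = p({}^-\vv{w})$ (Proposition \ref{prop:shape}). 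Under $w \mapsto wt$ the sequence ${}^-\vv{w}$ undergoes a transposition of its two central entries $\pm w(1)$ at positions $n, n+1$; under $w \mapsto tw$, it undergoes a transposition of the two $\pm 1$ entries at the symmetric positions $n - j_0 + 1$ and $n + j_0$. Combining this with Greene's theorem (Theorem \ref{gthm}) applied to subsequences of prescribed parity, one can control how the hollow tableau evolves along the specific trajectories for $P(w)$ and $P(wt)$ (or $P(tw)$), completing the inductive argument.

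Finally, the ``in particular'' statement about $p({}^-\vv{w})^{\mathrm{ev}}$ is an immediate consequence once the tableau equalities are proved, since the shape of $\mathcal{H}(P(w))$ is precisely the even part $(\sh P(w))^{\mathrm{ev}} = p({}^-\vv{w})^{\mathrm{ev}}$.
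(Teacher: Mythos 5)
Your reduction of the four identities to the two $P$-identities via $P(w)=Q(w^{-1})$ is fine, your localization of where the insertion sequences differ is correct, and your $2\times 2$ counterexample showing that hollow-equivalence of $D$ and $\overline D$ alone is not preserved by a common insertion is valid --- it is exactly the obstacle the actual proof must confront. But there are two genuine gaps. First, your ``key local lemma'' $\mc{H}(D\leftarrow v)=\mc{H}(D\leftarrow -v)$ is false as stated: take $D$ to be a single horizontal domino labelled $1$ and $v=2$. Then $D\leftarrow 2$ has shape $(4)$ with the even box of the new domino at $(1,3)$, while $D\leftarrow -2$ has shape $(2,1,1)$ with it at $(3,1)$, so the hollow tableaux differ. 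Indeed your own base observation places the two appended even boxes at $(1,2p_1\ev+1)$ and $(2q_1\ev+1,1)$, which are different boxes, and when $v$ exceeds every label of $D$ there is no cascade to repair this. The instances you actually need --- $D=\emptyset$ for the $wt$ case, and $v=1$ smaller than every label of $D$ for the $tw$ case --- are true, but they are essentially the theorem itself applied to prefixes, so they cannot serve as an independent stepping stone.

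Second, and more importantly, your resolution of the persistence problem is only gestured at. The paper's proof strengthens the inductive invariant: one carries along not just $\mc{H}(P(w)_k)=\mc{H}(P(wt)_k)$ but also the overlap condition $\dom_j(P(w)_k)\cap\dom_j(P(wt)_k)\neq\emptyset$ for every label $j$. Greene's theorem enters only to establish this overlap, via the recording tableaux: the partial row sums of $\sh Q(x)$ and $\sh Q(tx)$ differ by $0$ or $1$ and the partial column sums by $0$ or $-1$, which forces the dominoes added at each step to intersect. The technical heart is then a case analysis of Garfinkle's bumping rules showing that hollow-equivalence \emph{together with} this overlap propagates through a common insertion; note that in your counterexample the two dominoes labelled $3$ are disjoint, which is precisely what the overlap hypothesis excludes. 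Your substitute --- ``Greene's theorem applied to subsequences of prescribed parity'' --- cannot do this job, because Greene's theorem controls only shapes, and the shape-closeness it yields does not determine even parts: the domino-tileable partitions $(3,1)$ and $(2,2)$ satisfy the closeness constraints yet have even parts $(2,0)$ and $(1,1)$. Without the overlap invariant and the bumping analysis, the induction does not close.
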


\begin{Rem}\label{t}
	It can be easily verified that $(tw)(k)=-w(k)$ when $|w(k)|=1$ and $(tw)(k)=w(k)$ when $|w(k)|>1$. Similarly, $(wt)(1)=-w(1)$ and $(wt)(k)=w(k)$ when $k>1$.
\end{Rem}

\begin{ex}\label{hex1}
	Let $\vv w=(-3,1, 4, -2)\in W_4$. Then $\vv{tw}=(-3,-1, 4, -2)$, $\vv{wt}=(3,1, 4, -2)$, and 
	\[
	P(w)=P(wt)=\begin{tikzpicture}[scale=\domscale,baseline=-25pt]
	\hdom{0}{0}{1}
	\vdom{0}{1}{2}
	\vdom{1}{1}{3}
	\hdom{2}{0}{4}
	\end{tikzpicture}, \quad 
P(tw)=\begin{tikzpicture}[scale=\domscale,baseline=-25pt]
	\vdom{0}{0}{1}
	\vdom{0}{2}{2}
	\vdom{1}{0}{3}
	\hdom{2}{0}{4}
	\end{tikzpicture}.
	\]
They have the same hollow tableau
	\(\begin{tikzpicture}[scale=\domscale,baseline=-25pt]
\hobox{0}{0}{1}
\hobox{0}{2}{2}
\hobox{1}{1}{3}
\hobox{2}{0}{4}
	\end{tikzpicture}.\)
	
\end{ex}

Let $x\in\mathrm{Seq}_n (\mathbb{Z})$ be a sequence of integers with distinct absolute values. Define $tx$ to be the sequence obtained from $ x $ by changing the sign of $ \pm 1 $ in $ x $. In particular, if $\pm 1\not\in \{x_1,x_2,\cdots, x_n\}$, then $tx=x$. 

%If $x\in W_n$, this definition is compatible with the multiplication in $W_n$.

\begin{Lem}\label{lem1:dom1}
	Let $x\in\mathrm{Seq}_n (\mathbb{Z})$ be a sequence of nonzero integers with distinct absolute values. Assume that $ 1\in \{  x_1,x_2,\cdots, x_n\} $. Let $ a_k(x) $ $($resp. $ b_k(x) $$)$ be the sum of lengths of the first $k$ rows $($resp. columns$)$ of $ Q(x) $. Then for all $ k\geq 1 $,
	\[
	a_k(x)-a_k(tx)=0\text{ or }1, \quad b_k(x)-b_k(tx)=0\text{ or }-1.
	\]
\end{Lem}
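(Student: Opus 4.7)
The strategy is to reduce the lemma to a combinatorial statement about increasing and decreasing subsequences via Greene's theorem. First, I would invoke Proposition~\ref{prop:shape}, extended from signed permutations to arbitrary sequences of nonzero integers with distinct absolute values by relabeling through Lemma~\ref{RSeq}, to obtain $\sh(Q(x)) = p({}^-x)$ and $\sh(Q(tx)) = p({}^-(tx))$. Since all values appearing in $^-x$ and $^-(tx)$ are distinct, Theorem~\ref{gthm} then identifies $a_k(y)$ with the maximum total length of $k$ disjoint strictly increasing subsequences of $^-y$, and $b_k(y)$ with the analogous maximum for strictly decreasing subsequences, for $y \in \{x, tx\}$.

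Next I would pinpoint exactly how $^-x$ and $^-(tx)$ differ. Writing $x_j = 1$, the two sequences agree everywhere except at positions $p_1 := n - j + 1$ and $p_2 := n + j$, where $^-x$ carries the values $(-1, 1)$ while $^-(tx)$ carries $(1, -1)$. The key observation is that every other entry has absolute value at least $2$, so no integer in $(-1, 1) \setminus \{0\}$ appears elsewhere; hence any strictly monotone subsequence using both $p_1$ and $p_2$ must have these two positions consecutive in it.

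The main step is a direct case-by-case transport of optimal subsequence families. For the $a_k$ inequality: the values $(1, -1)$ at $(p_1, p_2)$ in $^-(tx)$ are decreasing in position order, so no increasing subsequence of $^-(tx)$ uses both positions. A short verification (relying on the ``$|\cdot| \geq 2$ elsewhere'' observation) shows that after changing the values back to produce $^-x$, each subsequence in an optimal increasing family for $^-(tx)$ remains strictly increasing, giving $a_k(x) \geq a_k(tx)$. Conversely, an optimal increasing family in $^-x$ may contain a single subsequence using both $p_1$ and $p_2$ (necessarily consecutive, with values $-1, 1$); in $^-(tx)$ these reverse to $1, -1$, so we drop $p_1$ or $p_2$ from that subsequence to recover monotonicity, losing exactly one in total length. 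This yields $a_k(tx) \geq a_k(x) - 1$, and combined with the previous direction, $a_k(x) - a_k(tx) \in \{0, 1\}$. The argument for $b_k$ is entirely symmetric, with the roles of $^-x$ and $^-(tx)$ exchanged and ``increasing'' replaced by ``decreasing'', producing $b_k(x) - b_k(tx) \in \{0, -1\}$. The main obstacle is bookkeeping: verifying in each sub-case that the swap at $p_1, p_2$ does not violate monotonicity at the neighbouring entries in the subsequence, which is precisely where the ``no other value in $(-1, 1)$'' observation is indispensable.
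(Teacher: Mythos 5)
Your proposal is correct and follows essentially the same route as the paper: reduce to Greene's theorem via Proposition \ref{prop:shape}, then track how swapping the values $\pm 1$ at the two affected positions of ${}^-x$ changes optimal families of monotone subsequences, using that all other entries have absolute value at least $2$. The only difference is organizational — you prove the two inequalities directly by transporting families between ${}^-x$ and ${}^-(tx)$, whereas the paper proves the one-sided bound $a_k(tx)\geq a_k(x)-1$ in general and then reapplies it with $x$ replaced by $tx$ (noting $(-1,1)$ is not a subsequence of ${}^-(tx)$) — which is not a substantive distinction.
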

\begin{proof}	
	
	By Proposition \ref{prop:shape}, $ p({}^-x) =\sh(Q(x))$, and hence 
	by Greene's theorem (Theorem \ref{gthm}), $a_k(x)$ can be explained as the maximal length of  subsequences which can be written as a union of $k$ increasing subsequences of ${}^-x$. In other words, there is a subsequence $\sigma$ of ${}^-x$, which is a disjoint union of increasing subsequences $\sigma_1, \cdots, \sigma_k$ with $a_k(x)=|\sigma_1|+\cdots+|\sigma_k|=|\sigma|$. 
	
	Set $y=tx$. First we claim that $a_k(y)\geq a_k(x)-1.$ In fact, if $(-1, 1)$ is not a subsequence of any $\sigma_i$, then $ t\sigma_i $ is also an increasing subsequence of ${}^-y$,  which implies that $a_k(y)\geq |t\sigma|=|\sigma|=a_k(x)$  by  Greene's theorem.
	Assume now that $(-1, 1)$ is a subsequence of  one of $\sigma_i$'s, say $ \sigma_1 $. Then $\sigma_1$ is not a subsequence of ${}^-y$. Deleting  $-1$ or $1$ from $\sigma_1$, we get an increasing subsequence $\sigma'_1$ of ${}^-y$. Then $\sigma'_1, \sigma_2, \cdots, \sigma_k$ form $k$ disjoint increasing subsequences of ${}^-y$, of total length $|\sigma|-1$. By   Greene's theorem, $a_k(y)\geq |\sigma|-1=a_k(x)-1$. %These prove \eqref{eq:ak}.

	With $y=tx$, we obtain $-1\in \{  y_1,y_2,\cdots, y_n\} $. Thus $(-1, 1)$ is  not a subsequence of ${}^-y$. The previous argument (with $x$ replaced by $y$) shows that $a_k(x)\geq a_k(y)$. 
	
	Now the first statement is obtained. Similarly, one can prove $ b_k(y)-b_k(x)=0\text{ or }1 $, by a column version of Greene's theorem.
\end{proof}

%For $ w\in W_n $ and $1\leq  k\leq n $, let 
%\begin{equation}\label{dweq2}
%P(w)_k:=((\cdots((\emptyset\leftarrow w(1))\leftarrow w(2))\cdots)\leftarrow w(k))
%\end{equation}
%for $1\leq k\leq n$ and $Q(w)_k$ the corresponding recording tableau. 

\begin{Lem}\label{lem1:intersect1}
	Let $w\in W_n$. Then for any $1\leq j\leq n$,
	\[
	\dom_j(P(w))\cap\dom_j(P(wt))\neq\emptyset.
	\]
\end{Lem}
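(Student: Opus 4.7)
The plan is to proceed by induction on $n$. The base case $n=1$ is immediate: $\dom_{|w(1)|}(P(w))$ and $\dom_{|w(1)|}(P(wt))$ are single dominoes based at $(1,1)$---one horizontal, one vertical, depending on $\sgn(w(1))$---and both contain the box $(1,1)$.

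For the inductive step, write $T_k:=P((w(1),\dots,w(k)))$ and $T_k':=P((-w(1),w(2),\dots,w(k)))$, so that $T_n=P(w)$ and $T_n'=P(wt)$, and assume $\dom_j(T_{n-1})\cap \dom_j(T_{n-1}')\ne \emptyset$ for every $1\le j\le n-1$. I would then trace the effect of the insertion $\leftarrow w(n)$ on each label $j$. If $j<|w(n)|$, then step~(2) of Definition~\ref{def:dom} does not touch $\dom_j$, so its position is inherited from $T_{n-1}$ and $T_{n-1}'$ and the intersection property descends. If $j=|w(n)|$, step~(1) of Definition~\ref{def:dom} places $\dom_j$ at the end of row~$1$ or column~$1$ of the sub-tableau of labels less than $|w(n)|$; combining the inductive intersection property with the row/column length control from Lemma~\ref{lem1:dom1} (applied via the identity $P(w)=Q(w^{-1})$ in Theorem~\ref{dthmis}) should show the two placements share a box. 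If $j>|w(n)|$, one must analyse how the bumping subcases (i)--(iii) of Definition~\ref{def:dom} transform $\dom_j$ in each tableau and verify the bumped images still intersect.

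The main obstacle will be the final case $j>|w(n)|$. Because the shapes $\sh(T_{n-1})$ and $\sh(T_{n-1}')$ may differ, $\dom_j$ can be bumped under different subcases on the two sides; one must enumerate all pairings of the three subcases and check that two pre-insertion dominoes sharing a box still share a box after bumping. Lemma~\ref{lem1:dom1} will limit the possible shape discrepancies and keep this analysis finite. An alternative route is via Van~Leeuwen's bijection (cited just after Proposition~\ref{prop:shape}), which identifies Garfinkle's domino insertion with the ordinary RSK correspondence on ${}^-\vv{w}$: in that picture ${}^-\vv{w}$ and ${}^-\vv{wt}$ differ only by the adjacent transposition of the middle pair $(-w(1),w(1))\leftrightarrow (w(1),-w(1))$, so the intersection property might be read off directly from the known effect of an elementary swap on RSK bumping paths.
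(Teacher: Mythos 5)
Your plan has a genuine gap precisely at the case you flag as the main obstacle, $j>|w(n)|$, and nothing in the proposal resolves it. The inductive hypothesis only tells you that $\dom_j(T_{n-1})$ and $\dom_j(T_{n-1}')$ share a box, but where $\dom_j$ lands after the insertion $\laa w(n)$ is governed by the entire intermediate tableau $E_{\leq j-1}$ (cases (i)--(iii) of Definition \ref{def:dom}), and these intermediate configurations differ between the two insertions in ways the hypothesis does not control; the hoped-for local statement ``dominoes sharing a box before bumping still share a box after bumping'' is exactly as hard as the lemma itself and is not justified. There is also a mismatch in your use of Lemma \ref{lem1:dom1}: it compares $x$ with $tx$, i.e.\ a sign flip on the entry of absolute value $1$, whereas your $T_k$ versus $T_k'$ compares sequences differing in the sign of the \emph{first} entry (of arbitrary absolute value), so it does not directly bound the shape discrepancies you need on the $P$-side. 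The fallback via Van Leeuwen is likewise only a shape-level identification in this paper (Proposition \ref{prop:shape}); it does not track the position of an individual labelled domino, so the ``effect of an elementary swap on RSK bumping paths'' cannot simply be read off.

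The paper's proof sidesteps bumping altogether, and this is the idea your plan is missing: using $P(w)=Q(w^{-1})$ and $(wt)^{-1}=tw^{-1}$, the statement is equivalent to $\dom_j(Q(z))\cap\dom_j(Q(tz))\neq\emptyset$ for $z=w^{-1}$. The recording tableau grows by exactly one domino per letter, so $\dom_k(Q(\mb z_k))$ is just the difference of the consecutive shapes $\sh(Q(\mb z_k))$ and $\sh(Q(\mb z_{k-1}))$; by Proposition \ref{prop:shape} and Greene's theorem these shapes are $p({}^-\mb z_k)$, and Lemma \ref{lem1:dom1} (which now applies, since left multiplication by $t$ flips the sign of the $\pm1$ entry) says the partial row sums $a_l$ and column sums $b_l$ of $Q(\mb z_k)$ and $Q(t\mb z_k)$ differ by $0$ or $1$ in a fixed direction. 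If the two newly added dominoes were disjoint, one of them would lie entirely within the first $l$ rows (or columns) and the other outside them, forcing a difference of $2$ or $3$ in $a_l$ (or the wrong difference in $b_l$), a contradiction. If you want to salvage your approach, the correct move is to replace the insertion-side induction by this recording-side argument; as written, the proposal does not constitute a proof.
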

\begin{proof}
	In view of Theorem \ref{dthmis}, it is equivalent to $$ \dom_j(Q(z))\cap\dom_j(Q(tz))\neq\emptyset $$ for all $ z\in W_n $.

	Assume without loss of generality that $ 1\in \{ z(1), \cdots, z(n) \} $. Let 
	\[
	\mb{z}_k=(z(1), \cdots, z(k)).
	\]
	Set $\mb{y}_k=t\mb{z}_k$. If $ 1\in \{ z(1), \cdots, z(k) \} $, Lemma \ref{lem1:dom1} implies $a_l( \mb{z}_k)-a_l( \mb{y}_k)=0$ or $1$,  and $b_l( \mb{z}_k)-b_l( \mb{y}_k)=0$ or $-1$ for all $1\leq l\leq k $, otherwise $\mb{y}_k=\mb{z}_k$ and $a_l( \mb{z}_k)-a_l( \mb{y}_k)=b_l( \mb{z}_k)-b_l( \mb{y}_k)=0$.

By definition, $ Q( \mb{z}_{k}) $ (resp. $ Q( \mb{y}_{k}) $) is obtained from $ Q( \mb{z}_{k-1}) $ (resp. $Q( \mb{y}_{k-1})$) by adding $\dom_k(Q( \mb{z}_k))$ (resp. $\dom_k(Q( \mb{y}_k))$). It suffices to prove  that 
$$ \dom_k(Q( \mb{z}_k))\cap\dom_k(Q(\mb{y}_k))\neq\emptyset  $$ for all $ 1\leq k \leq n$.
	 
	 Assume that it does not hold for $ k $. Then there exists some $l>0$ such that one of  $\dom_k(Q(\mb{z}_k))$ and $\dom_k(Q(\mb{y}_k))$ is contained in the first $l$ rows (or columns) and  the other one is contained in other rows (or columns).
	 
When $\dom_k(Q(\mb{z}_k))$ is contained in the first $l$ rows, 
\[
a_l(\mb{z}_k)=a_l(\mb{z}_{k-1})+2,\quad a_l(\mb{y}_k)=a_l(\mb{y}_{k-1}), 
\]
which implies that 
	\[
	a_l(\mb{z}_k)-a_l(\mb{y}_{k})=2+a_l(\mb{z}_{k-1})-a_l(\mb{y}_{k-1})=2\ \mbox{or}\ 3,
	\]
	a contradiction with Lemma \ref{lem1:dom1}.

	When $\dom_k(Q(\mb{z}_k))$ is contained in the first $l$ columns,  
	 \[
	 b_l(\mb{z}_k)=b_l(\mb{z}_{k-1})+2,\quad b_l(\mb{y}_k)=b_l(\mb{y}_{k-1}),
	 \]
	  which implies that 
	\[
	b_l(\mb{z}_k)-b_l(\mb{y}_{k})=2+b_l(\mb{z}_{k-1})-b_l(\mb{y}_{k-1})=2\ \mbox{or}\ 1,
	\]
	also a contradiction with Lemma \ref{lem1:dom1}.
	
	 The remaining cases  deduce similar contradictions.
\end{proof}

In general, we also need to consider the domino tableau 
\begin{equation*}\label{dweq1}
	P(w)_k:=((\cdots((\emptyset\laa w(1))\laa w(2))\cdots)\laa w(k))
\end{equation*}
for $k\leq n$. Since the domino insertion depends only on the relative order of the corresponding numbers. Lemma \ref{lem1:intersect1} can be easily generalized as follows.

\begin{Lem}\label{lem1:intersect}
	Let $w\in W_n$. If $j$ occurs in $P(w)_k$, then
	\[
	\dom_j(P(w)_k)\cap\dom_j(P(wt)_k)\neq\emptyset.
	\]
\end{Lem}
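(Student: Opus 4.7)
The plan is to reduce directly to Lemma \ref{lem1:intersect1} through an order-preserving relabeling, exactly as the excerpt hints. The key observation is that Garfinkle's domino insertion (Definition \ref{def:dom}) sees only two pieces of data about each entry being inserted: (i) its sign, which dictates horizontal versus vertical placement in step (1); and (ii) the relative order of its absolute value against the labels already present, which governs the bumping rules in step (2). Hence any order-preserving relabeling of the absolute values, applied consistently, induces the same relabeling between the corresponding $P$-tableaux.

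Concretely, I set $S = \{|w(1)|, \ldots, |w(k)|\}$ and let $r : S \to \{1, 2, \ldots, k\}$ be the order-preserving bijection. I then define $w' \in W_k$ by $w'(i) = \sgn(w(i)) \cdot r(|w(i)|)$ for $1 \leq i \leq k$. The sequences $(w(1), \ldots, w(k))$ and $(w'(1), \ldots, w'(k))$ have matching signs and matching comparison structure on absolute values, so the observation above gives
\[
\dom_{r^{-1}(j')}(P(w)_k) = \dom_{j'}(P(w'))
\]
for every $j' \in \{1, \ldots, k\}$ (same geometric domino; only the label differs). Using Remark \ref{t}, the first $k$ entries of $\vv{wt}$ and $\vv{w't}$ are $(-w(1), w(2), \ldots, w(k))$ and $(-w'(1), w'(2), \ldots, w'(k))$ respectively, and these two sequences are again related by exactly the same bijection $r$, so the same identification holds for $wt$: $\dom_{r^{-1}(j')}(P(wt)_k) = \dom_{j'}(P(w't))$.

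Applying Lemma \ref{lem1:intersect1} to $w' \in W_k$ then yields $\dom_{j'}(P(w')) \cap \dom_{j'}(P(w't)) \neq \emptyset$ for every $1 \leq j' \leq k$, and transporting this intersection back through the relabeling with $j = r^{-1}(j')$ gives the conclusion whenever $j$ is a label appearing in $P(w)_k$. The only piece of careful bookkeeping, the ``main obstacle'' such as it is, is verifying rigorously that each bumping step in Definition \ref{def:dom} is indeed order-oblivious in the sense above; this follows by a routine induction on the number of labels already inserted, with no genuine difficulty.
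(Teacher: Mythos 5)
Your proposal is correct and is essentially the paper's own argument: the paper proves Lemma \ref{lem1:intersect} by the one-line observation that domino insertion depends only on the signs and the relative order of the inserted labels, so the prefix $(w(1),\ldots,w(k))$ (and likewise $(-w(1),w(2),\ldots,w(k))$) can be relabeled order-preservingly to reduce to Lemma \ref{lem1:intersect1} applied to an element of $W_k$. Your explicit construction of $w'$ via the order-preserving bijection $r$, together with the check that $wt$ and $w't$ correspond under the same relabeling, is just a detailed write-up of that reduction.
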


Recall the domino insertion algorithm from Definition \ref{def:dom}.

\begin{Prop}\label{prop:holow-induction}
	Let $ D $, $ \overline{D} $ be two standard domino tableaux with $\caH(D)=\caH(\overline D)$.
	Let $E=D\laa ci$ and $\overline E=\overline D\laa  ci$ with $ c=\pm1 $. If $\dom_j(E)\cap\dom_j(\overline E)\neq\emptyset$ for all $ j\geq i $ occurred in $E$ and $\overline E$, then  $\caH(E)=\caH(\overline E)$.
\end{Prop}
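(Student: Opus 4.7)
The plan is to prove by induction on $j$ the stronger statement $\mathcal{H}(E_{\leq j}) = \mathcal{H}(\overline{E}_{\leq j})$ for every $j \geq i-1$, which at the maximal label yields $\mathcal{H}(E) = \mathcal{H}(\overline{E})$. The base case $j = i-1$ reduces to $\mathcal{H}(D_{\leq i-1}) = \mathcal{H}(\overline{D}_{\leq i-1})$, which follows from $\mathcal{H}(D) = \mathcal{H}(\overline{D})$ by restricting to the labels less than $i$.

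For the step $j = i$, the algorithm places a fresh horizontal (when $c=1$) or vertical (when $c=-1$) domino at the end of row $1$ or column $1$ of $E_{\leq i-1}$. By \eqref{eq:ev-od}, the number of even boxes in row $1$ (resp. column $1$) of the hollow pins down the row (resp. column) length up to parity, and a direct computation shows that the newly added domino's even box lies at a position depending only on this count. Hence this position coincides for $E$ and $\overline{E}$.

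For the inductive step $j > i$ (with $j$ occurring in $D$, since otherwise nothing changes), I need to show the even box of $\dom_j(E)$ equals that of $\dom_j(\overline{E})$. The induction hypothesis gives $\mathcal{H}(E_{\leq j-1}) = \mathcal{H}(\overline{E}_{\leq j-1})$, and because $\mathcal{H}(D) = \mathcal{H}(\overline{D})$ the dominoes $\dom_j(D)$ and $\dom_j(\overline{D})$ share a common even box $(k_0, l_0)$, although their orientations may differ. If the intersection $\dom_j(E) \cap \dom_j(\overline{E})$ contains an even box the conclusion is immediate; the real work is the remaining case where the intersection consists of a single odd box.

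To resolve that odd-box subcase, I would do case analysis on Garfinkle's three cases (i), (ii), (iii) for each of $E$ and $\overline{E}$. A central observation is that because $(k_0, l_0)$ is an even box, $(k_0, l_0) \in q_{j-1}$ if and only if $(k_0, l_0) \in \mathcal{H}(E_{\leq j-1})$ if and only if $(k_0, l_0) \in \overline{q}_{j-1}$; this correlates which cases of the algorithm can simultaneously apply on the two sides. In case (iii), where the bumped domino is placed in the adjacent row or column at the first available position, a parity argument analogous to the $j=i$ step shows its even box is again determined solely by $\mathcal{H}(E_{\leq j-1})$. Consequently, in each compatible pair of cases, either the even boxes of $\dom_j(E)$ and $\dom_j(\overline{E})$ agree outright, or they differ but then the two dominoes turn out to be disjoint, which contradicts the intersection hypothesis. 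The main obstacle is the exhaustive case check: the sub-cases mixing horizontal and vertical bumpings (for instance case (iii) with a horizontal $\dom_j(D)$ against case (iii) with a vertical $\dom_j(\overline{D})$) demand careful tracking of which box may be shared, and it is precisely there that the intersection hypothesis becomes indispensable.
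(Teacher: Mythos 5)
Your outline follows the same strategy as the paper's proof: induct on $j$, use $\caH(D)=\caH(\overline D)$ to see that $\dom_j(D)$ and $\dom_j(\overline D)$ share their even box $(k,l)$, split according to whether that even box already lies in the current shape, and reserve the intersection hypothesis for the mixed-orientation situation. The base step $j=i$ and the easy cases (the shared even box not yet in the shape, or $\dom_j(E)$ and $\dom_j(\overline E)$ sharing an even box) are handled correctly. But the decisive step is asserted rather than proved: your claim that in every compatible pair of cases ``either the even boxes agree outright, or the two dominoes turn out to be disjoint'' is precisely the content of the proposition at the only point where it is nontrivial, and you give no argument for it — you yourself flag it as the main obstacle. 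The paper settles it with a tool you never invoke: when $\dom_j(D)$ is horizontal and $\dom_j(\overline D)$ is vertical, the new even boxes sit at $(k+1,r)$ with $r\le l+1$ and at $(s,l+1)$ with $s\le k+1$, and Lemma \ref{lem1:leftup} (up-left saturation of hollow tableaux) shows that $r=l+1$ forces $s=k+1$ and conversely; in the remaining case $r\le l-1$, $s\le k-1$ (the inequalities are strict by parity) the two dominoes are visibly disjoint, contradicting the hypothesis. Without this argument, or a substitute for it, the induction step is not established.

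In addition, your intermediate claim that in case (iii) of Definition \ref{def:dom} the bumped domino's even box ``is determined solely by $\mathcal{H}(E_{\leq j-1})$'' is not correct as stated: the bump goes to the next row or the next column according to the orientation of $\dom_j(D)$, and the hollow tableau records no orientations. Indeed, if that claim were true, the mixed (iii)-versus-(iii) case would be immediate and the intersection hypothesis superfluous there, contrary to your own closing remark. What is actually available (and what the paper uses) is only the constraint that the new even box lies in row $k+1$ at a column $\le l+1$ in the horizontal case, respectively in column $l+1$ at a row $\le k+1$ in the vertical case; the equality of the two hollows then has to be extracted from Lemma \ref{lem1:leftup} together with the nonempty-intersection assumption, as above.
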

\begin{proof}
	We use induction on $ j $ to prove that $ \mc{H}(E_{\leq j}) =\mc{H}(\overline E_{\leq j}) $ for  $j \geq i $. If $ j=i $, it is clear. Now assume that $ j>i $ and $ \mc{H}(E_{\leq j-1}) =\mc{H}(\overline E_{\leq j-1}) $. If $j$ does not occur in $D$ and $\overline D$, obviously $ \mc{H}(E_{\leq j}) =\mc{H}(\overline E_{\leq j}) $. If $ j $ occurs in $D$ and $\overline D$, $\caH(\dom_j(D))=\caH(\dom_j(\overline D))$. There are the following two cases.

	\textbf{Case} 1. $\mc{H}(\dom_j(D)) $  does not  overlap with $   \mc{H}(E_{\leq j-1})$. Then only cases (i) and (ii) in Definition \ref{def:dom} could happen, which implies that 
	\[
	\mc{H}(E_{\leq j})=   \mc{H}(E_{\leq j-1})\sqcup \mc{H}(\dom_j(D)).
	\]
	 Since $\mc{H}(\dom_j(\overline{D})) $  also does not  overlap with $   \mc{H}(\overline{E}_{\leq j-1})$,  
	\[
	\mc{H}(\overline{E}_{\leq j})=   \mc{H}(\overline{E}_{\leq j-1})\sqcup \mc{H}(\dom_j(\overline{D}).
	\]
	   Hence	$ \mc{H}(E_{\leq j})=\mc{H}(\overline{E}_{\leq j}) .$

\textbf{Case} 2. $\mc{H}(\dom_j(D)) $    overlaps with $   \mc{H}(E_{\leq j-1})$ at an even box $ (k,l) $. By (ii) and (iii) in Definition \ref{def:dom} , if $ \dom_j(D) $ is horizontal (resp. vertical), then $  \mc{H}(E_{\leq j})$ is obtained from $ \mc{H}(E_{\leq j-1}) $ by adding an even box filled with $ j $ to the next row (resp. column) of $ (k,l)$. It is similar for  $  \mc{H}(\overline E_{\leq j})$.

 If $  \dom_j(D) $ and $  \dom_j(\overline{D}) $ have the same direction, then $ \mc{H}(E_{\leq j})=\mc{H}(\overline{E}_{\leq j}) .$ 

Next assume without loss of generality that  $ \dom_j(D) $ is horizontal and $  \dom_j(\overline{D}) $ is vertical. Then $\mc{H}(\dom_j(E))=\mc{H}(E_{\leq j})\setminus \mc{H}(E_{\leq j-1}) $ is at an even box $ (k+1, r) $ with $ r\leq l+1$. Similarly,   $\mc{H}(\dom_j(\overline E))=\mc{H}(\overline{E}_{\leq j})\setminus \mc{H}(\overline{E}_{\leq j-1}) $ is at an even box $ (s,l+1)  $ with $ s\leq k+1$. 

%Note that both ${E}_{\leq j}$ and $\overline{E}_{\leq j}$ are standard.
 If $r=l+1 $,  then $(k+1, l+1)\in \mc{H}({E}_{\leq j})$,  and $ (a,l+1)\in  \mc{H}(E_{\leq j-1})=\mc{H}(\overline{E}_{\leq j-1}) $ for all $ 1\leq a\leq k $ with $a+l+1$ even by using Lemma \ref{lem1:leftup}. Since the even box $ (s,l+1) \not\in \mc{H}(\overline{E}_{\leq j-1})$, one has $s=k+1$. Similarly, if $ s=k+1 $, we get $ r=l+1 $. In these cases, we have $ \mc{H}(E_{\leq j})=\mc{H}(\overline{E}_{\leq j}) .$

Finally assume that $ s<k+1 $ and  $ r<l+1 $. This yields $s\leq k-1$ and $r\leq l-1$. We always have $\dom_j(E_{\leq j})\cap \dom_j(\overline{E}_{\leq j}) =\emptyset $, a contradiction.
\end{proof}

\noindent\textbf{The proof of Theorem \ref{thm:hollow}.}
	 Note that $(wt)(1)=-w(1)$ and $(wt)(k)=w(k)$ for $k>1$. With $P(w)_1=\emptyset\laa w(1)$ and $P(wt)_1=\emptyset\laa -w(1)$, obviously $\mc{H}(P(w)_1)=\mc{H}(P(wt)_1)$ is the even box $(1, 1)$ filled with $|w(1)|$. Applying Lemma \ref{lem1:intersect} and Proposition \ref{prop:holow-induction}, we can show that $\mc{H}(P(w)_k)=\mc{H}(P(wt)_k)$ by induction on $k$.  In particular, when $k=n$, one has $\mc{H}(P(w))=\mc{H}(P(wt))$. Moreover, $ \mc{H}(Q(w))=\mc{H}(Q(wt)) $ follows  immediately from the  definition of the $ Q $-tableau. Then we apply \eqref{eq:pqinv} to complete the proof.
%Theorem \ref{dthmis}.

%
%%%%%%%%%%%%%%%%%%%%%%%%%%%%%%%%%%%%%%%%%%%%%%%%%%%%%%%%%%%%%%%%%%%%
%

\section{GK dimensions of highest weight modules}\label{sec:lie}

%
%%%%%%%%%%%%%%%%%%%%%%%%%%%%%%%%%%%%%%%%%%%%%%%%%%%%%%%%%%%%%%%%%%%%
%

In this section, we will prove Theorem \ref{thm2} about GK dimensions of highest weight modules for classical types. 

We always assume that $\mu$ is an anti-dominant weight. Recall in \S2.4 that $J=\{\alpha\in\Delta_{[\mu]}\mid\langle\mu, \alpha^\vee\rangle=0\}$ and $W_{[\mu]}^J=\{w\in W_{[\mu]}\mid w\ \mbox{is shortest in}\ wW_J\}$. If $\mu$ is integral, then $J\subset\Delta$ and $W_{[\mu]}=W$.

\subsection{General notations} We assume that $\Phi=B_n, C_n$ or $D_n$ and apply the usual realization of $\Phi$ \cite{Hum78}. So $\Phi$ is a subset of a real vector space $E=\mathbb{R}^n$ which admits a natural $W$-invariant bilinear form $(-,-)$ with orthonormal basis $\ep_i$. Write the set $\Delta$ of simple roots as $\{\alpha_1,\cdots, \alpha_{n-1}, \alpha_n\}$ such that $\alpha_i=\ep_i-\ep_{i+1}$ for $1\leq i<n$ and $\alpha_n=\ep_n, 2\ep_n, \ep_{n-1}+\ep_n$, respectively. Let 
\begin{equation*}
\Lambda=\{\lambda\in\hs\mid\bil{\lambda}{\al}\in\mathbb{Z}\text{ for all }\al\in\Phi^+ \}
\end{equation*}
be the set of  \textit{integral} weights. Note that we can write $\lambda\in\hs$ as 
\begin{equation*}
\lambda=(\lambda_1, \cdots, \lambda_n)=\lambda_1\ep_1+\cdots\lambda_n\ep_n
\end{equation*}
with $\lambda_1, \cdots, \lambda_n\in\mathbb{C}$. 

%We say an integral weight $ \lambda$ is \textit{strictly anti-dominant} if $ \lambda_i-\lambda_j\leq0$ for $1\leq i<j\leq n$ and $\lambda_n\leq 0$. Recall that $ \lambda\in\hs $ is anti-dominant if $ \bil{\lambda}{\al} \notin\mathbb{Z}_{>0}$ for all $ \al\in\Phi^+ $.  Thus a strictly anti-dominant weight is always anti-dominant. If $\Phi=B_n$ or $C_n$, then an integral anti-dominant weight is also strictly anti-dominant.

Define action $ s_\al $ on $ \hs$ by $s_\al\lambda=\lambda-\bil{\lambda}{\al}\al\text{ for all } \lambda\in\hs$ and $\alpha\in\Phi$. Then the Weyl group $ W $ is a Coxeter group with  $ S=\{s_\al\mid \al\in\Delta \} $  being the set of generators. For $\Phi=B_n$ or $C_n$, we have an isomorphism between the Coxeter systems $ (W,S) $ and $ (W_n, S_n) $ (see \S\ref{subsec:bn}) via
\begin{equation*}
s_{\ep_i-\ep_{i+1}}\mapsto s_{n-i},0\leq  i<n,\text{ and } s_{2\ep_n}=s_{\ep_n}\mapsto t.
\end{equation*}
We identify these groups, and hence $s_{n-i}$ ($1\leq i<n $) acts on $ \mf{h}^* $ by interchanging the coefficients of $ \ep_i $ and $ \ep_{i+1} $, and $ t $ changing the sign of the coefficient of $ \ep_n $. More generally, if $ w\in W_n $, then 
\begin{equation}\label{key0}
	w(\ep_{n+1-k})=\sgn(w(k))\ep_{n+1-|w(k)|}, \text{ for all }1\leq k\leq n,
\end{equation}
where $ \sgn:\mathbb{R}\to \{0,1,-1\} $ is the usual sign function.
\begin{Lem}\label{lem:wrho}
	Let $ w\in W_n $ and $\delta=(-n, \cdots, -2, -1)$. Then
	\[
	w\delta=(-w^{-1}(n),-w^{-1}(n-1),,\cdots,-w^{-1}(1)).
	\]
\end{Lem}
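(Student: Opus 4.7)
The plan is to compute $w\delta$ directly using the formula \eqref{key0}, which already encodes how $w$ acts on the basis vectors, and then interpret the resulting coefficients via $w^{-1}$.

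First I would rewrite $\delta$ in a form that matches the indexing in \eqref{key0}: setting $k = n+1-i$ in $\delta = \sum_{i=1}^n -(n+1-i)\epsilon_i$ gives
\[
\delta = \sum_{k=1}^n -k\,\epsilon_{n+1-k}.
\]
Applying $w$ linearly and using \eqref{key0} then yields
\[
w\delta = \sum_{k=1}^{n} -k\,\sgn(w(k))\,\epsilon_{n+1-|w(k)|}.
\]
To extract the $\epsilon_j$-coefficient, I would fix $j$ and note that since $w$ is a signed permutation of $[-n,n]$, there is exactly one $k\in[1,n]$ with $|w(k)|=n+1-j$; the remaining task is to identify $-k\,\sgn(w(k))$ for this $k$ as $-w^{-1}(n+1-j)$.

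The heart of the argument is the sign identity: if $w(k)=m\neq 0$, then $\sgn(m)\cdot k = w^{-1}(|m|)$. This is immediate from the defining property $w(-i)=-w(i)$ of $W_n$, because when $m>0$ one has $w^{-1}(m)=k=\sgn(m)k$, and when $m<0$ one has $w(-k)=-m=|m|$, so $w^{-1}(|m|)=-k=\sgn(m)k$. Applying this with $m=w(k)$ and $|m|=n+1-j$ gives $\sgn(w(k))\cdot k=w^{-1}(n+1-j)$, hence $(w\delta)_j=-w^{-1}(n+1-j)$.

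Reading off the components for $j=1,2,\ldots,n$ then yields $w\delta=(-w^{-1}(n),-w^{-1}(n-1),\ldots,-w^{-1}(1))$, which is the claim. I do not foresee a serious obstacle: the content is entirely the sign bookkeeping in \eqref{key0}, and the substitution $j\leftrightarrow n+1-|w(k)|$ is the only nontrivial manipulation.
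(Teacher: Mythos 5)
Your proposal is correct and follows essentially the same route as the paper's own proof: apply \eqref{key0} to $\delta=\sum_{k=1}^n -k\,\ep_{n+1-k}$ and use the sign identity $k\,\sgn(w(k))=w^{-1}(|w(k)|)$ (which the paper invokes tersely via $w^{-1}(w(i))=i$), then reindex. Your coefficient-by-coefficient extraction versus the paper's substitution $k=|w(i)|$ in the sum is only a presentational difference.
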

\begin{proof}
	Note that $ w^{-1}(w(i))=i $. By \eqref{key0}, one has
	\begin{align*}
		w\delta&=-w(\sum_{i=1}^n i\ep_{n+1-i})=-\sum_{i=1}^n i\sgn(w(i))  \ep_{n+1-|w(i)|}\\
		&=-\sum_{i=1}^n w^{-1}(|w(i)|)\ep_{n+1-|w(i)|}=-\sum_{k=1}^n w^{-1}(k)\ep_{n+1-k},
	\end{align*} 
	which implies the lemma.
\end{proof}

\subsection{The integral case} In this subsection, the anti-dominant weight $\mu$ is always integral.

\begin{Lem}\label{spb}
	Suppose $\Phi=B_n$ or $C_n$. Let $\lambda=w\mu$ for $w\in W^J$. Then $p(\lambda^-)=p({}^-\vv{w})$.
\end{Lem}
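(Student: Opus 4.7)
The plan is to apply Lemma~\ref{RSeq} to the two sequences $\lambda^-$ and ${}^-\vv w$. Since the entries of ${}^-\vv w$ are all distinct while $\lambda^-$ may have ties, the content of that lemma reduces to two requirements: strict comparisons in $\lambda^-$ must match those in ${}^-\vv w$, and whenever $(\lambda^-)_i=(\lambda^-)_j$ with $i<j$ we must have $({}^-\vv w)_i<({}^-\vv w)_j$.

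First I would replace $w$ by $\sigma:=w^{-1}$. By Theorem~\ref{dthmis} one has $P(w)=Q(w^{-1})$, hence $\sh(P(w))=\sh(P(\sigma))$; combined with Proposition~\ref{prop:shape} this yields $p({}^-\vv w)=p({}^-\vv \sigma)$. The advantage of working with $\sigma$ is that $\lambda^-$ and ${}^-\vv\sigma$ are related entry-by-entry (which is not the case for $\lambda^-$ and ${}^-\vv w$). Setting $\nu_k:=\mu_{n+1-k}$, anti-dominance of $\mu$ gives $\nu_1\geq\nu_2\geq\cdots\geq\nu_n$ with every $\nu_k\leq 0$. Passing to the basis $\eta_k:=\ep_{n+1-k}$ (in which $w$ acts as a standard signed permutation via \eqref{key0}), a direct computation produces $\lambda_{n+1-j}=\sgn(\sigma(j))\,\nu_{|\sigma(j)|}$, and assembling the two halves of the sequences gives
\[
(\lambda^-)_k=\psi\bigl(({}^-\vv\sigma)_k\bigr)\qquad\text{for all }1\leq k\leq 2n,
\]
where $\psi:\{\pm1,\ldots,\pm n\}\to\mathbb{R}$ is defined by $\psi(a):=-\sgn(a)\,\nu_{|a|}$. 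Since $\nu_1\geq\cdots\geq\nu_n\leq 0$, the function $\psi$ is weakly increasing on $\{-n,\ldots,-1,1,\ldots,n\}$ with its natural order. This already delivers the first condition in Lemma~\ref{RSeq}: if $(\lambda^-)_i>(\lambda^-)_j$ then $({}^-\vv\sigma)_i>({}^-\vv\sigma)_j$.

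The remaining, substantive condition is the tie condition, and this is where the hypothesis $w\in W^J$ enters. Ties $\psi(a)=\psi(b)$ with $a\neq b$ arise in two ways: either $|a|,|b|$ lie in a common nonzero constant block of $\nu$ and $a,b$ have matching signs, or $|a|,|b|$ both lie in the zero block of $\nu$ (in which case signs may differ). A short calculation locates each entry of ${}^-\vv\sigma$ in terms of $w$: the value $+a$ appears at position $\mathrm{pos}(w(a))$ and $-a$ at position $\mathrm{pos}(-w(a))$, where $\mathrm{pos}:\{\pm1,\ldots,\pm n\}\to\{1,\ldots,2n\}$ is the order-preserving bijection sending $c>0$ to $n+c$ and $c<0$ to $n+1+c$. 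Translating the required within-class increasing order into conditions on $w$ produces exactly $w(l+1)>w(l)$ whenever $\nu_{l+1}=\nu_l$, together with $w(1)>0$ whenever $\nu_1=0$; by Lemma~\ref{uclem2} these are precisely the defining conditions of $w\in W^J$. The principal obstacle is the bookkeeping around the index reversal $\ep_i\leftrightarrow\eta_{n+1-i}$ and its interaction with signs, particularly in the zero-block case where positive and negative entries mix inside a single $\psi$-class.
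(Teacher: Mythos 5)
Your proposal is correct, and it shares the paper's overall skeleton: both proofs reduce to comparing $\lambda^-$ with ${}^-\vv{w^{-1}}$ via Lemma \ref{RSeq} and then pass back to $w$ through $P(w)=Q(w^{-1})$ (Theorem \ref{dthmis}, \eqref{eq:pqinv}) and Proposition \ref{prop:shape}. The difference lies in how the hypotheses of Lemma \ref{RSeq} are verified. The paper introduces the auxiliary regular anti-dominant weight $\delta=(-n,\dots,-1)$, sets $\nu=w\delta$, uses $w\in W^J$ together with Lemma \ref{uclem2} to place $\lambda$ in the upper closure $\widehat{wC_\circ}$, reads off the four comparison conditions from Definition \ref{defuc}, and identifies $\nu^-={}^-\vv{w^{-1}}$ by Lemma \ref{lem:wrho}. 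You instead bypass the chamber machinery: your entrywise identity $(\lambda^-)_k=\psi(({}^-\vv\sigma)_k)$ with $\psi$ weakly increasing (which is in effect Lemma \ref{lem:wrho} generalized from $\delta$ to an arbitrary anti-dominant $\mu$, via \eqref{key0}) handles all strict comparisons at once, and the ties are resolved by translating $w\in W^J$ into the window-notation descent conditions $w(l)<w(l+1)$ when $\nu_l=\nu_{l+1}$ and $w(1)>0$ when $\nu_1=0$. I checked your key claims (the formula $\lambda_{n+1-j}=\sgn(\sigma(j))\nu_{|\sigma(j)|}$, the monotonicity of $\psi$, the description of tie classes including the mixed-sign zero block, and the equivalence of the within-class order condition with the descent conditions), and they are all correct, so the sketched ``translating\dots produces exactly\dots'' step does go through. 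One small inaccuracy: the final identification of those window-notation conditions with $w\in W^J$ is not literally Lemma \ref{uclem2}; it requires either the standard descent description of $W_n$ (that $\ell(ws_l)>\ell(w)$ iff $w(l)<w(l+1)$ and $\ell(wt)>\ell(w)$ iff $w(1)>0$) or the explicit computation of $w\gamma$ for $\gamma\in J$ via \eqref{key0} — a routine but necessary verification. The trade-off: the paper's route reuses lemmas it needs anyway (the same upper-closure argument is recycled in Lemma \ref{spd} for type $D$), while yours is more self-contained and computational but carries the sign/index bookkeeping explicitly.
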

\begin{proof}
	Choose  $\delta=(-n, \cdots, -1)$ as in Lemma \ref{lem:wrho}. Set $\nu=w\delta$. Then $\delta\in C_\circ$ and $\nu\in wC_\circ$. With $w\in W^J$, Lemma \ref{uclem2} yields $\lambda\in\widehat{wC_\circ}$. By Definition \ref{defuc}, this implies
	\begin{itemize}
		\item [(1)] if $ 1\leq i<j\leq n $ and $ \nu_i<\nu_j $, then $ \lambda_i\leq \lambda_j $;
		\item [(2)] if $ 1\leq i<j\leq n $ and $ \nu_i>\nu_j $, then $ \lambda_i>\lambda_j $;
		\item [(3)] if $ 1\leq i, j\leq n $ and $ \nu_i<-\nu_j $, then $ \lambda_i\leq -\lambda_j $;
		\item [(4)] if $ 1\leq i, j\leq n $ and $ \nu_i>-\nu_j$, then $ \lambda_i>-\lambda_j $.
	\end{itemize}
	Applying Lemma \ref{RSeq} to $\lambda^-$ and $\nu^-$, we get $p(\lambda^-)=p(\nu^-)$. Set $z=w^{-1}$. Lemma \ref{lem:wrho} implies $\nu^-={}^-\vv{z}$. Furthermore, one has $p({}^-\vv{z})=p({}^-\vv{w})$ in view of \eqref{eq:pqinv} and Proposition \ref{prop:shape}.  This proves the lemma.
\end{proof}

\begin{Lem}\label{spd}
	Suppose $\Phi=D_n$. Let $\lambda=w\mu$ for $w\in W^J$. Then $p(\lambda^-)$ is equal to $p({}^-\vv{w})$ or $p({}^-\vv{wt})$.
\end{Lem}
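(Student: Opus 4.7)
The plan is to mirror the $B_n$-proof of Lemma \ref{spb}, while accounting for the fact that in type $D_n$ each Weyl chamber is the union of two $B_n$-chambers related by a single reflection. First, I would choose $\delta = (-n, \ldots, -1)$, which lies in the $D_n$-anti-dominant chamber $C_\circ$ since $\langle \delta, \alpha^\vee \rangle < 0$ for every $D_n$ simple root $\alpha$. Setting $\nu = w\delta \in wC_\circ$, Lemma \ref{uclem2} combined with $w \in W^J$ places $\lambda = w\mu$ in the $D_n$-upper closure $\widehat{wC_\circ}$.

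The key geometric step is to establish
\[
\widehat{wC_\circ} \subseteq \widehat{wC_\circ^B} \cup \widehat{(wt)C_\circ^B},
\]
where $C_\circ^B$ denotes the $B_n$-anti-dominant chamber and the right-hand upper closures are taken with respect to $B_n$. To verify this, set $\nu' = (wt)\delta$ and $i_0 = n+1-|w(1)|$. A direct calculation via Lemma \ref{lem:wrho} and Remark \ref{t} shows that $\nu$ and $\nu'$ coincide outside position $i_0$, where their signs are opposite and $|\nu_{i_0}| = 1$. For any $i \neq i_0$, since $|\nu|$ permutes $\{1, \ldots, n\}$, there exists $j$ with $|\nu_j| < |\nu_i|$; combining the inequalities imposed on $\lambda_i - \lambda_j$ and $\lambda_i + \lambda_j$ by membership in $\widehat{wC_\circ}$ then forces $\sgn(\lambda_i)$ to agree with $\sgn(\nu_i)$ in exactly the strict-versus-nonstrict pattern demanded by the $B_n$-specific upper-closure conditions coming from $\pm\ep_i$. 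Only $\lambda_{i_0}$ remains unconstrained, and the sign of $\lambda_{i_0}$ selects which of the two $B_n$-upper closures contains $\lambda$.

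Given the containment, a two-case analysis finishes the proof. In the first case $\lambda \in \widehat{wC_\circ^B}$, conditions (1)--(4) from the proof of Lemma \ref{spb} hold with $\nu$, Lemma \ref{RSeq} gives $p(\lambda^-) = p(\nu^-)$, and the identification $\nu^- = {}^-\vv{w^{-1}}$ from Lemma \ref{lem:wrho}, together with Proposition \ref{prop:shape} and \eqref{eq:pqinv}, yields $p(\lambda^-) = p({}^-\vv{w})$. In the second case $\lambda \in \widehat{(wt)C_\circ^B}$, the parallel argument with $\nu'$ gives $p(\lambda^-) = p((\nu')^-) = p({}^-\vv{(wt)^{-1}})$, and a further application of Proposition \ref{prop:shape} and \eqref{eq:pqinv} (jointly giving $p({}^-\vv{x}) = p({}^-\vv{x^{-1}})$ for every $x \in W_n$) converts this to $p({}^-\vv{wt})$.

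The main obstacle I expect is the chamber-containment claim. While the decomposition of a $D_n$-chamber into two $B_n$-chambers is geometrically transparent, one must carefully verify that the upper-closure constraints (which involve a mix of strict and non-strict inequalities) descend correctly from type $D$ to type $B$, and that the $B_n$-specific positive roots $\pm\ep_i$ behave compatibly on the shared boundary where $\lambda_{i_0} = 0$. Once this inclusion is secured, the remainder is essentially a line-by-line extension of the $B_n$ proof.
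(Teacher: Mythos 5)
Your proposal is correct, and it reaches the same strategic reduction as the paper (split the $D_n$-chamber $wC_\circ$ into the two $B_n$-chambers attached to $w$ and $wt$, then invoke the type-$B$ analysis), but the mechanism at the key step is genuinely different. The paper never proves your containment $\widehat{wC_\circ}\subseteq\widehat{wC_\circ^B}\cup\widehat{(wt)C_\circ^B}$; instead it works from the anti-dominant side: since the $D_n$-anti-dominant weight satisfies $\mu_1\leq\cdots\leq\mu_{n-1}\leq-|\mu_n|$, either $\mu$ or $t\mu$ is $B_n$-anti-dominant, so Lemma \ref{spb} can be cited as a black box to produce some $z\in W_n$ with $p(\lambda^-)=p({}^-\vv{z})$ and $\lambda$ in the $B_n$-upper closure of $zC_\circ^B$; the paper then identifies $z\in\{w,wt\}$ by observing that $\lambda$ consequently lies in the $D_n$-upper closure of $zC_\circ$, and invoking the disjointness of the $D_n$-upper-closure decomposition of $E$ (Lemma \ref{uclem1}(1)) together with the fact that exactly one of $z,zt$ lies in $W_n'$. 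Your route is more local and computational: you prove the upper-closure containment directly by summing the constraints from the long roots $\pm(\ep_i-\ep_j)$ and $\ep_i+\ep_j$ against a coordinate $j$ of smaller absolute value to recover the short-root constraints at each $i\neq i_0$, and then rerun the internals of the Lemma \ref{spb} proof (conditions (1)--(4), Lemma \ref{RSeq}, Lemma \ref{lem:wrho}, Proposition \ref{prop:shape} and \eqref{eq:pqinv}) in each of the two cases. This buys you a cleaner statement (no case split on $\sgn(\mu_n)$, no appeal to uniqueness of the chamber decomposition) at the cost of redoing part of the $B_n$ argument rather than quoting it. Two points you should make explicit to be complete: for the $D_n$-roots the $B_n$-upper-closure constraints on $\widehat{wC_\circ^B}$ and $\widehat{(wt)C_\circ^B}$ coincide with the $D_n$-constraints already guaranteed by $\lambda\in\widehat{wC_\circ}$ (each $D_n$-root has constant sign on the whole $D_n$-chamber containing both $B_n$-chambers), so only the short roots $\ep_i$ need your summing argument; and in the case $\lambda\in\widehat{(wt)C_\circ^B}$ no hypothesis of the form $wt\in W^J$ is needed, since conditions (1)--(4) follow from upper-closure membership alone via Definition \ref{defuc}.
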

\begin{proof}
	Set $\delta=(-n, \cdots, -1)$. In this case, $W=W'_n$ and $tC_\circ=C_\circ$. In particular, both $\delta$ and $t\delta$ are contained in the anti-dominant chamber $C_\circ$. Note that the anti-dominant weight $\mu$ satisfies $\mu_1\leq\cdots\leq\mu_{n-1}\leq-|\mu_n|$ in this case. If $\mu_n\leq0$, then $\mu$ is also an anti-dominant weight of the system $B_n$. In view of Lemma \ref{uclem2} and Lemma \ref{spb}, we can find $z\in W_n$ such that $\lambda=z\mu$ is contained in the upper closure of the $B_n$-chamber which contains $z\delta$. Since $D_n$ can be treated as a subsystem of $B_n$, $\lambda$ is also contained in the upper closure of $D_n$-chamber which contains $z\delta$. This chamber is $zC_\circ$. On the other hand, Lemma \ref{uclem2} implies $\lambda\in\widehat{wC_\circ}$. Thus $wC_\circ=zC_\circ=ztC_\circ$. It is evident that exactly one of $\{z, zt\}$ belongs to $W'_n$. This forces $w=z$ or $zt$, in view of Lemma \ref{uclem1}. Lemma \ref{spb} then yields $p(\lambda^-)=p({}^-\vv{w})$ or $p({}^-\vv{wt})$. If $\mu_n>0$, the argument is similar with $\mu$ replaced by $t\mu$. 
\end{proof}

\begin{Prop}\label{prop:integral-algo}
	Let $\Phi=B_n$ or $C_n$ and $ \lambda\in\mathfrak{h}^*$ be integral. Then 
	\[
	\gkd L(\lambda)=n^2-F_b(\lambda^-).
	\]
\end{Prop}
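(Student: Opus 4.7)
The plan is to combine three ingredients already assembled in the paper: Proposition \ref{pr:main1} (which reduces GK dimension to an $\aff$-value), Proposition \ref{prop:bB} (the $\aff$-formula for $W_n$ via $F_b$), and Lemma \ref{spb} (which identifies the relevant shapes).

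First, I would write $\lambda = w\mu$ for $\mu$ an (integral) anti-dominant weight and $w \in W_{[\mu]}^J = W^J$ (the equality holding because $\lambda$ is integral). By Proposition \ref{pr:main1},
\[
\gkd L(\lambda) = |\Phi^+| - \aff_{[\lambda]}(w) = n^2 - \aff(w),
\]
since $|\Phi^+| = n^2$ for both $B_n$ and $C_n$, and $W_{[\lambda]} = W = W_n$ (the Weyl groups of types $B_n$ and $C_n$ coincide, so Proposition \ref{prop:bB} applies uniformly).

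Next, I would invoke Proposition \ref{prop:bB} to rewrite $\aff(w) = F_b({}^-\vv{w})$. Since $F_b$ depends only on the shape $p(\cdot)$ of the Young tableau produced by Robinson–Schensted (by the very Definition in Lemma \ref{def:F}), the final step is to show
\[
F_b(\lambda^-) = F_b({}^-\vv{w}),
\]
which reduces to proving $p(\lambda^-) = p({}^-\vv{w})$. But this is precisely the content of Lemma \ref{spb}. Stringing the equalities together gives $\gkd L(\lambda) = n^2 - F_b(\lambda^-)$, as required.

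The only genuine subtlety — and the one that has already been handled in Lemma \ref{spb} — is that the highest weight $\lambda$ is not literally $w\delta$ for $\delta = (-n,\ldots,-1)$, but rather an arbitrary element of the upper closure $\widehat{wC_\circ}$. The role of Lemma \ref{spb} (which itself rests on Lemma \ref{uclem2} and the insensitivity of Robinson–Schensted to order-preserving relabelings, Lemma \ref{RSeq}) is exactly to ensure that replacing $w\delta$ by $\lambda$ does not change the associated shape, and hence does not change $F_b$. Since this step is already done, the present proposition is a short direct assembly and no new obstacle is expected.
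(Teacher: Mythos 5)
Your proposal is correct and matches the paper's own proof, which likewise assembles Proposition \ref{pr:main1}, Proposition \ref{prop:bB} (equivalently Theorem \ref{thm1}), Lemma \ref{spb}, and the fact that $|\Phi^+|=n^2$. The extra remarks you make (integrality giving $W_{[\lambda]}=W=W_n$, and $F_b$ depending only on the shape) are exactly the implicit steps the paper leaves to the reader.
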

\begin{proof}
	This is a consequence of Theorem \ref{thm1} (or Proposition \ref{prop:bB}), Proposition \ref{pr:main1}, Lemma \ref{spb} and the fact that $|\Phi^+|=n^2$.
\end{proof}

\begin{Prop}\label{prop:integral-algo-dn}
	Let $\Phi=D_n$ and $ \lambda\in\mathfrak{h}^*$ be integral. Then 
 \[
\gkd L(\lambda)=n^2-n-F_d(\lambda^-).
\]
\end{Prop}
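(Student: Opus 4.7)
The plan is to mirror the argument of Proposition \ref{prop:integral-algo}, but with an extra step to handle the ambiguity in Lemma \ref{spd}. By Proposition \ref{pr:main1}, if we write $\lambda=w\mu$ with $\mu$ anti-dominant and $w\in W^J$, then
\[
\gkd L(\lambda)=|\Phi^+|-\aff(w)=(n^2-n)-\aff(w),
\]
since the positive system of $D_n$ has cardinality $n^2-n$. By Theorem \ref{thm1} (equivalently Proposition \ref{prop:bd}),
\[
\aff(w)=\sum_{i\geq1}(i-1)p({}^-\vv{w})_i\ev=F_d({}^-\vv{w}).
\]
So the whole proof reduces to showing $F_d(\lambda^-)=F_d({}^-\vv{w})$.

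This is exactly the point where the type $D$ case diverges from types $B/C$. In the $B/C$ setting, Lemma \ref{spb} identifies $p(\lambda^-)$ with $p({}^-\vv{w})$ on the nose. In the $D$ case, Lemma \ref{spd} gives the weaker conclusion that $p(\lambda^-)$ equals either $p({}^-\vv{w})$ or $p({}^-\vv{wt})$. So I cannot directly conclude equality of partitions, and $p({}^-\vv w)$ and $p({}^-\vv{wt})$ need not coincide in general, as Example \ref{hex1} illustrates.

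The rescue is the hollow tableau invariance. By Lemma \ref{def:F}, the functional $F_d$ depends only on the even-row-counts of the partition:
\[
F_d(x)=\sum_{i\geq 1}(i-1)p(x)_i\ev.
\]
Moreover Theorem \ref{thm:hollow} asserts precisely that $p({}^-\vv{w})\ev=p({}^-\vv{wt})\ev$. Therefore $F_d({}^-\vv{w})=F_d({}^-\vv{wt})$, and in either alternative of Lemma \ref{spd} we obtain $F_d(\lambda^-)=F_d({}^-\vv{w})=\aff(w)$. Substituting into the displayed formula for $\gkd L(\lambda)$ yields the claim.

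The only step that required real work is the invariance $p({}^-\vv w)\ev=p({}^-\vv{wt})\ev$, which is the content of Theorem \ref{thm:hollow} and is precisely the reason the hollow tableau was introduced in \S\ref{sec:dom}. Once it is in hand, the proof collapses to the formal consequence above, so I do not anticipate further obstacles.
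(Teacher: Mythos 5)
Your proof is correct and follows the same route as the paper: Proposition \ref{pr:main1} plus Proposition \ref{prop:bd}, with the two alternatives of Lemma \ref{spd} reconciled by the hollow-tableau invariance $p({}^-\vv{w})\ev=p({}^-\vv{wt})\ev$ of Theorem \ref{thm:hollow}. (Only a cosmetic slip: Example \ref{hex1} actually exhibits $P(w)=P(wt)\neq P(tw)$, so it illustrates the $tw$ rather than the $wt$ discrepancy, but this does not affect the argument.)
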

\begin{proof}
	By Lemma \ref{spd}, one has $p(\lambda^-)=p({}^-\vv{w})$ or $p({}^-\vv{wt})$. Here we apply Theorem \ref{thm:hollow} and get $ p({}^-\vv{w})\ev=p({}^-\vv{wt})\ev=p(\lambda^-)\ev$. Then the lemma follows from Theorem \ref{thm1} (or Proposition \ref{prop:bd}), Proposition \ref{pr:main1} and the fact that $|\Phi^+|=n^2-n$. 
\end{proof}

\subsection{The general case}\label{non-int} In this section, we will consider the GK dimension of $ L(\lam) $ with $ \lam $ not necessarily integral. 

Now $\Phi=B_n$, $C_n$ or $D_n$. Fix $\lambda\in\mathfrak{h}^*\simeq\mathbb{C}^n$. First we describe a decomposition of the root system $ \Phi_{[\lambda]}=\{\alpha\in\Phi\mid\langle\lambda, \alpha^\vee\rangle\in\mathbb{Z}\}\subset\Phi $
into some orthogonal subsystems.

Let $K_{(z)}:=\{i\leq n\mid\lambda_i\in z+\mathbb{Z}\}$ for $z\in\mathbb{C}$.  
For each $ z\in \mathbb{C} $, we define a set $\Phi_{(z)}$  as follows.

In general, if $z\not\in\frac{1}{2}\mathbb{Z}$, we set
\[
\Phi_{(z)}:=\{\ep_i-\ep_j, \pm(\ep_j+\ep_k), \ep_k-\ep_l\mid i, j\in K_{(z)}, k, l\in K_{(-z)}, i\neq j, k\neq l\}.
\]
%which is a root system of type $ A $.
% generated by simple roots\[
%\{  \ep_{i_k}-\ep_{i_{k+1}} \mid 1\leq k<r  \}\cup \{ \ep_{i_r}+\ep_{j_{s}}\}\cup \{ \ep_{j_l}-\ep_{j_{l+1}} \mid 1\leq l<s     \}.
%\]
If $\Phi=B_n$ and $z\in\frac{1}{2}\mathbb{Z}$, we set
\begin{equation*}\label{inteq11}
	\Phi_{(z)}:=\{\pm(\ep_i\pm \ep_j), \pm \ep_k\mid i, j, k\in K_{(z)}\ \mbox{and}\ i<j\}.
\end{equation*}
%which is a root system of type $ B $.
If $\Phi=C_n$ and $z\in\mathbb{Z}$, we set
\begin{equation*}\label{inteq2}
	\Phi_{(z)}:=\{\pm(\ep_i\pm \ep_j), \pm 2\ep_k\mid i, j, k\in K_{(z)}\ \mbox{and}\ i<j\}.
\end{equation*}
If $\Phi=C_n$ and $z\in\frac{1}{2}+\mathbb{Z}$, we set
\begin{equation*}\label{inteq3}
	\Phi_{(z)}:=\{\pm(\ep_i\pm \ep_j)\mid i, j\in K_{(z)}\ \mbox{and}\ i<j\}.
\end{equation*}
If $\Phi=D_n$ and $z\in\frac{1}{2}\mathbb{Z}$, we set
\begin{equation*}\label{inteq3} 
	\Phi_{(z)}:=\{\pm(\ep_i\pm \ep_j)\mid i, j\in K_{(z)}\ \mbox{and}\ i<j\}.
\end{equation*}

For $z\in\mathbb{C}$, let $K_{(z)}=\{i_1<\cdots<i_r\}$ and $K_{(-z)}=\{j_1<\cdots<j_s\}$. Suppose that $ \Phi_{(z)} \neq \emptyset$.
If $z\not\in\frac{1}{2}\mathbb{Z}$, then $\Phi_{(z)}$ is generated by simple roots 
\begin{equation*}\label{ninteq1}
\{  \ep_{i_k}-\ep_{i_{k+1}} \mid 1\leq k<r  \}\cup \{ \ep_{i_r}+\ep_{j_{s}}\}\cup \{ \ep_{j_l}-\ep_{j_{l+1}} \mid 1\leq l<s     \},
\end{equation*}
so $\Phi_{(z)}$ is a root system of type $A_{r+s-1}$. If $\Phi=B_n$ (resp. $ D_n $) and $z\in\frac{1}{2}\mathbb{Z}$, $ \Phi_{(z)} $ is a root system of type $ B $ (resp. $ D $). 
If $\Phi=C_n$ and $z\in\mathbb{Z}$ (resp. $ z\in\frac{1}{2}+\mathbb{Z}$), $ \Phi_{(z)} $ is a root system of type $ C$ (resp. $ D $). Note that $ C_1=B_1=A_1 $, $ D_3=A_3 $, $ D_2=A_1\times A_1 $. Hence  $ \Phi_{(z)} $  is irreducible except when it is of type $ D_2 $.

As usual, $\Ree(z)$ is used to denote the real part of a complex number $z$.
% The following result can be verified straightforwardly. 

\begin{Prop}\label{rclem5}
	Fix $\Phi=B_n, C_n$ or $D_n$. For $\lambda\in\mathfrak{h}^*$,
	\[
	\Phi_{[\lambda]}=\bigsqcup_{0\leq\Ree(z)\leq 1/2}\Phi_{(z)}.
	\]
Moreover, $ \Phi_{(z)} $, $ 0\leq\Ree(z)\leq 1/2 $ are orthogonal with each other.
%	
%	\begin{itemize}
%		\item [(1)] $\Phi_{(z)}$ is a root subsystem of $\Phi_{[\lambda]}$ for any $z\in\mathbb{C}$;
%		\item [(2)] If $\beta\in\Phi_{[\lambda]}$, then $\beta\in\Phi_{(z)}$ for some $z\in\mathbb{C}$;	
%		\item [(3)] If $z+z'\in\mathbb{Z}$ or $z-z'\in\mathbb{Z}$, then $\Phi_{(z)}=\Phi_{(z')}$ for $z, z'\in\mathbb{C}$;
%		\item [(4)] If $z\pm z'\not\in\mathbb{Z}$, then $\Phi_{(z)}\cap\Phi_{(z')}=\emptyset$ for $z, z'\in\mathbb{C}$;
%		\item [(5)] $\Phi_{[\lambda]}=\bigsqcup_{0\leq\Ree(z)\leq 1/2}\Phi_{(z)}$.
%	\end{itemize}
	%In particular, if $z\not\in\frac{1}{2}\mathbb{Z}$, then $\Phi_{(z)}$ is of type $A$.
\end{Prop}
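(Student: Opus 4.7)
The plan is to verify the equality as sets by classifying each root of $\Phi_{[\lambda]}$ according to which component $\Phi_{(z)}$ it lands in, then to verify disjointness and pairwise orthogonality by inspecting the index sets involved. I will split the roots of $\Phi$ into the standard families $\pm(\ep_i-\ep_j)$, $\pm(\ep_i+\ep_j)$, and the short/long roots $\pm\ep_k$ or $\pm 2\ep_k$ (when they exist), and read off the integrality condition $\langle\lambda,\alpha^\vee\rangle\in\mathbb{Z}$ on each family in terms of $\lambda_1,\dots,\lambda_n$.

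First I would show $\Phi_{[\lambda]}\subseteq\bigcup_{0\leq\Ree(z)\leq 1/2}\Phi_{(z)}$. A root $\ep_i-\ep_j$ lies in $\Phi_{[\lambda]}$ iff $\lambda_i-\lambda_j\in\mathbb{Z}$, i.e.\ $i,j\in K_{(z)}$ for a common $z$; choosing the unique representative $z$ with $0\leq\Ree(z)<1$ and, in case $\Ree(z)>1/2$, replacing $z$ by $1-z$ (which exchanges $K_{(z)}$ and $K_{(-z)}$), we land in some $\Phi_{(z)}$ with $0\leq\Ree(z)\leq 1/2$. A root $\ep_i+\ep_j$ lies in $\Phi_{[\lambda]}$ iff $\lambda_i+\lambda_j\in\mathbb{Z}$, i.e.\ $i\in K_{(z)}$ and $j\in K_{(-z)}$ for some $z$; the same normalization places the root in a unique $\Phi_{(z)}$. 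For $\Phi=B_n$, the short root $\ep_k$ lies in $\Phi_{[\lambda]}$ iff $2\lambda_k\in\mathbb{Z}$, hence $\lambda_k\in\tfrac12\mathbb{Z}$ and $k\in K_{(z)}$ for $z\in\{0,1/2\}$. Similarly, for $\Phi=C_n$ the long root $2\ep_k$ contributes only when $\lambda_k\in\mathbb{Z}$, i.e.\ $z=0$. The reverse inclusion $\Phi_{(z)}\subseteq\Phi_{[\lambda]}$ is immediate from the definitions in each of the cases listed in \S5.3.

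Next I would show the union is disjoint. Fix $z_1\neq z_2$ with $0\leq\Ree(z_i)\leq 1/2$. The index sets appearing in $\Phi_{(z_i)}$ are contained in $K_{(z_i)}\cup K_{(-z_i)}$. If $z_i\notin\tfrac12\mathbb{Z}$ then $K_{(z_i)}\cup K_{(-z_i)}=\{k:\lambda_k\equiv\pm z_i\ (\mathrm{mod}\ \mathbb{Z})\}$, and if $z_i\in\tfrac12\mathbb{Z}$ then $K_{(z_i)}=K_{(-z_i)}$. The restriction $0\leq\Ree(z_i)\leq 1/2$ ensures that the sets $\{z_1,-z_1\}$ and $\{z_2,-z_2\}$ are disjoint modulo $\mathbb{Z}$, so the corresponding index sets are disjoint, forcing $\Phi_{(z_1)}\cap\Phi_{(z_2)}=\emptyset$.

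Finally, orthogonality follows from the same index disjointness: any root in $\Phi_{(z_i)}$ is of the form $\pm\ep_a\pm\ep_b$ (or $\pm\ep_a$, $\pm 2\ep_a$) with all of $a,b\in K_{(z_i)}\cup K_{(-z_i)}$, and since these index sets are disjoint across distinct $z_1,z_2$, the pairing $(\pm\ep_a\pm\ep_b,\pm\ep_c\pm\ep_d)=0$ using the orthonormality of $\{\ep_i\}$. The main bookkeeping obstacle is the half-integral case: I need to make sure that the conditions defining $\Phi_{(z)}$ for $z\in\tfrac12\mathbb{Z}$ are compatible with the rank-one short/long roots of types $B$ and $C$, and that $D$-type has no such root so that only $\pm(\ep_i\pm\ep_j)$ appear. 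Once these case distinctions are sorted, the three inclusions and the orthogonality all drop out.
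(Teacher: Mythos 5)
Your proposal is correct and follows essentially the same route as the paper: a root-by-root integrality check giving both inclusions, normalization of $z$ into the strip $0\leq\Ree(z)\leq 1/2$ via $\Phi_{(z)}=\Phi_{(z')}$ whenever $z\pm z'\in\mathbb{Z}$, and disjointness plus orthogonality deduced from the disjointness of the index sets $K_{(z)}\cup K_{(-z)}$. The only caveat --- that for non-real $z$ with $\Ree(z)\in\{0,\tfrac12\}$ both $z$ and $-z$ lie in the strip and index the same subsystem $\Phi_{(z)}=\Phi_{(-z)}$ --- is an indexing convention the paper's own statement shares, so it is not a gap in your argument.
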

\begin{proof}
For $\beta=\ep_i-\ep_j\in\Phi$ (resp. $\ep_i+\ep_j$, $\ep_i$, $2\ep_i\in\Phi$), $\beta\in\Phi_{[\lambda]}$ if and only if $\lambda_i-\lambda_j\in\mathbb{Z}$ (resp. $\lambda_i+\lambda_j$, $2\lambda_i$, $\lambda_i\in\mathbb{Z}$). This implies that  $\Phi_{(z)}\subset\Phi_{[\lambda]}$, and that any $\beta\in\Phi_{[\lambda]}$ must belong to some $  \Phi_{(z)}$. Hence $\Phi_{[\lambda]}$ is a union of all the $  \Phi_{(z)}$, $ z\in \mathbb{C} $.
	
For $z, z'\in\mathbb{C}$, if	$z+z'\in\mathbb{Z}$ or $z-z'\in\mathbb{Z}$ , then $\Phi_{(z)}=\Phi_{(z')}$. Thus for any $ z'\in\mathbb{C} $, there exists some $ z $ with $ 0\leq \Ree(z)\leq \frac12 $ such that $\Phi_{(z)}=\Phi_{(z')}$.

 If $z\pm z'\not\in\mathbb{Z}$, then $\Phi_{(z)}\cap\Phi_{(z')}=\emptyset$, and in fact they are orthogonal with each other. This completes the proof.	
\end{proof}

As above, $K_{(z)}=\{i_1<\cdots<i_r\}$ and $K_{(-z)}=\{j_1<\cdots<j_s\}$.
If $ z\notin \frac12\mathbb{Z} $,
set 
\begin{equation*}\label{ninteq2}
\lambda_{(z)}=(\lambda_{i_1}, \lambda_{i_2}, \cdots,\lambda_{i_r}, -\lambda_{j_s}, -\lambda_{j_{s-1}}, \cdots, -\lambda_{j_1})\in\mathrm{Seq}_{r+s}(z+\mathbb{Z}).
\end{equation*}
%In this case, we have $ p(\lambda_{(z)})=p(\lambda_{(-z)}) $.
If $z\in\frac{1}{2}\mathbb{Z}$, we always set
\begin{equation*}\label{ninteq3}
	\lambda_{(z)}=(\lambda_{i_1}, \lambda_{i_2}, \cdots,\lambda_{i_r}),
\end{equation*}
and we have $\lambda_{(z)}^-\in\mathrm{Seq}_{2r}(z+\mathbb{Z})$.

%\begin{proof}
%	The second statement of (2) follows from Greene's Theorem (Theorem \ref{gthm}), while the other assertions are obvious.
%\end{proof}
%For convenience, we set $F_a(x)=\sum_i(i-1)p(x)_i$, $F_b(x)=\sum_i(i-1)p(x)_i\od$ and $F_d(x)=\sum_i(i-1)p(x)_i\ev$ for $x\in\mathrm{Seq}_n (\Gamma)$, where $\Gamma=c+\mathbb{Z}$ for a $c\in\mathbb{C}$.

\begin{Thm}\label{thm:algo}
Let $\lambda=(\lambda_1, \lambda_2, \cdots, \lambda_n)\in \mathfrak{h}^*$.
	\begin{itemize}
		\item [(1)] If $ \Phi=B_n $, then
		\[
		\gkd L(\lambda)=n^2-F_b(\lambda_{(0)}^-)-F_b(\lambda_{(\frac{1}{2})}^-)-\sum _{0\leq\Ree(z)\leq\frac{1}{2}, z\neq0,\frac{1}{2}} F_a(\lambda_{(z)}).
		\]
		\item [(2)] If $ \Phi=C_n $, then
		\[
		\gkd L(\lambda)=n^2-F_b(\lambda_{(0)}^-)-F_d(\lambda_{(\frac{1}{2})}^-)-\sum _{0\leq\Ree(z)\leq\frac{1}{2}, z\neq0,\frac{1}{2}} F_a(\lambda_{(z)}).
		\]
		\item [(3)] If $ \Phi=D_n $, then
		\[
		\gkd L(\lambda)=n^2-n-F_d(\lambda_{(0)}^-)-F_d(\lambda_{(\frac{1}{2})}^-)-\sum _{0\leq\Ree(z)\leq\frac{1}{2}, z\neq0,\frac{1}{2}} F_a(\lambda_{(z)}).
		\]
	\end{itemize}
\end{Thm}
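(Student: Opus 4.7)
The plan is to reduce the non-integral case to the integral results (Propositions \ref{prop:integral-algo} and \ref{prop:integral-algo-dn}) by means of the orthogonal decomposition $\Phi_{[\lambda]} = \bigsqcup_{0\le\Ree z\le 1/2}\Phi_{(z)}$ from Proposition \ref{rclem5}. First I would apply Proposition \ref{pr:main1}: write $\lambda=w\mu$ with $\mu$ anti-dominant and $w\in W_{[\mu]}^J$, so that $\gkd L(\lambda)=|\Phi^+|-\aff_{[\lambda]}(w)$, and record that $|\Phi^+|=n^2$ for $B_n,C_n$ and $|\Phi^+|=n^2-n$ for $D_n$. The target identity will then be obtained by evaluating $\aff_{[\lambda]}(w)$ piece by piece.

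Because the $\Phi_{(z)}$ are pairwise orthogonal, $W_{[\lambda]}=W_{[\mu]}$ decomposes as a direct product $\prod_z W_{(z)}$ of classical Weyl groups, and $w$ factors accordingly as $w=\prod_z w_{(z)}$ with each $w_{(z)}$ a shortest coset representative for the obvious parabolic subgroup of $W_{(z)}$. Applying Lemma \ref{lem:Hecke}(3) gives
\[
\aff_{[\lambda]}(w)=\sum_{0\le\Ree z\le 1/2}\aff_{W_{(z)}}(w_{(z)}).
\]
The type of each $\Phi_{(z)}$ is read off from \S\ref{non-int}: type $A$ when $z\notin\tfrac12\mathbb{Z}$; type $B$ when $\Phi=B_n,\ z\in\tfrac12\mathbb{Z}$; type $C$ when $\Phi=C_n,\ z\in\mathbb{Z}$; type $D$ when either $\Phi=C_n$ and $z\in\tfrac12+\mathbb{Z}$, or $\Phi=D_n$ and $z\in\tfrac12\mathbb{Z}$. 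I would then define $\mu_{(z)}$ from $\mu$ in the same way $\lambda_{(z)}$ is defined from $\lambda$, check that $\mu_{(z)}$ is anti-dominant for $\Phi_{(z)}$ (using that $\mu_i\le 0$ for $i<n$ and the ordering enforced by anti-dominance), and observe that $\lambda_{(z)}=w_{(z)}\mu_{(z)}$ by construction. Since $W_{(z)}$ acts as a classical Weyl group with $\mu_{(z)}$ playing the role of an integral anti-dominant weight inside $\Phi_{(z)}$, Propositions \ref{prop:bA}, \ref{prop:integral-algo} and \ref{prop:integral-algo-dn} apply verbatim and contribute, respectively, $F_a(\lambda_{(z)})$ for type $A$, $F_b(\lambda_{(z)}^-)$ for types $B$ and $C$ (whose Weyl groups coincide and whose formula depends only on $F_b$), and $F_d(\lambda_{(z)}^-)$ for type $D$. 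Summing and subtracting from $|\Phi^+|$ produces the three formulas of the theorem.

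The main obstacle is making the identification in Step 3 rigorous. Concretely, after embedding $W_{(z)}$ into the standard classical Weyl group on $|K_{(z)}\cup K_{(-z)}|$ letters, one must verify that $w_{(z)}$ corresponds to the element whose $\lambda$-sequence, in the sense of \S\ref{subsec:bn}--\ref{subsec:dn}, is precisely $\lambda_{(z)}$ (or $\lambda_{(z)}^-$), together with the upper-closure condition of Lemma \ref{uclem2} ensuring $w_{(z)}$ is the shortest coset representative inside $W_{(z)}$. For the mixed type $A$ pieces with $z\notin\tfrac12\mathbb{Z}$, this is delicate because $\lambda_{(z)}$ interleaves coordinates indexed by $K_{(z)}$ with negated coordinates indexed by $K_{(-z)}$; the correct proof uses Lemma \ref{RSeq} to replace the sequence $\vv{w_{(z)}}$ by $\lambda_{(z)}$ without changing the Robinson--Schensted shape, mirroring the integral arguments of Lemmas \ref{spb} and \ref{spd}. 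For the type $D$ summands with $z\in\tfrac12\mathbb{Z}$, Theorem \ref{thm:hollow} is needed, exactly as in the proof of Proposition \ref{prop:integral-algo-dn}, to handle the ambiguity between $w_{(z)}$ and $w_{(z)}t$.
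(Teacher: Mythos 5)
Your proposal is correct and follows essentially the same route as the paper: Proposition \ref{pr:main1} combined with the orthogonal decomposition of Proposition \ref{rclem5}, additivity of the $\aff$-function over the direct product (Lemma \ref{lem:Hecke}(3)), and the integral-case formulas applied factor by factor (with Theorem \ref{thm:hollow} behind the type $D$ factors), exactly as in the paper's argument. The only point the paper notes that you omit is the degenerate case where $\lambda_{(z)}$ has a single entry and $\Phi_{(z)}$ is empty (e.g.\ $\Phi=C_n$, $z=\tfrac12$, or $\Phi=D_n$, $z=0,\tfrac12$), which is harmless because $F_d(\lambda_{(z)}^-)=0$ there by Example \ref{eq:F1}.
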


%The proof of this theorem will be given in \S\ref{non-int}.

\begin{proof}
Set $S=\{z\in\mathbb{C}\mid0\leq\Ree(z)\leq\frac{1}{2}\}$ and $\aff'=\aff_{[\lambda]}:W_{[\lam]}\to\mathbb{N}$.

 Let $W_{(z)}$ be the Weyl group corresponding to $\Phi_{(z)}$. Then $W_{[\lambda]}$ must be a direct product of subgroups $ W_{(z)}$ with $ z\in S $ by Proposition \ref{rclem5}.

Write $\lambda=w\mu$ for $\mu$ anti-dominant and $ w\in W_{[\mu]}^J$ (\S2.4). Let $ w=\prod_{z\in S} w_{(z)} $, where $ w_{(z)} $ is  the component in $ W_{(z)} $. Then $ \aff'(w)=\sum_{z\in S} \aff'(w_{(z)}) $. One can see that $ \aff'(w_{(z)})$ is given by $ F_a(\lambda_{(z)}) $, $ F_b(\lambda_{(z)}^-) $ or $ F_d(\lambda_{(z)}^-) $ according to the type of $ \Phi_{(z)} $ (using results in the integral cases). The theorem then follows from Proposition \ref{pr:main1}.

We should pay attention to some extreme cases. If $\lambda_{(z)}$ has exactly one entry for $\Phi=C_n$, $z=\frac{1}{2}$ or $\Phi=D_n$, $z=0, \frac{1}{2}$, the root system $\Phi_{(z)}$ is empty. Fortunately Example \ref{eq:F1} shows that $F_d(\lambda_{(z)}^-)=0$ in this case, which will not change our formulas.
\end{proof}

\begin{ex}
	Let $\Phi=C_{10}$. Assume that 
	\[
	\lambda=(3.1, 2.3,1.1, -4, -4.1,2.5,1.9,2,2.1,0)\in\mathfrak{h}^*.
	\]
	Set $x=\lambda_{(0.1)}=(3.1,1.1,2.1,-1.9, 4.1)$, $y=\lambda_{(0.3)}=(2.3)$, $z=\lambda_{(0)}=(-4,2,0)$,  $w=\lambda_{(0.5)}=(2.5)$. It can be verified that $p(x)=(3, 1^2)$, $p(y)=(1)$, $p(z^-)=(4, 1^2)$ and $p(w^-)=(1^2)$. Therefore $p(z^-)\od=(2, 1)$ and $p(w^-)\ev=(1)$. By Theorem \ref{thm:algo}, we have
	\[
	\begin{aligned}
		\gkd L(\lambda) &=n^2-F_a(x)-F_a(y)-F_b(z^-)-F_d(w^-)\\
		&=10^2-3-0-1-0=96.
	\end{aligned}	
	\]
\end{ex}

%\begin{Rem}
	%It is natural to expect explicit formulas as above for the exceptional types. This is by no means an easy problem. Fortunately, for a special weight, one can always use a computer in these lower rank cases. As the first step, we can apply Proposition \ref{pr:main1} and reduce it to the computation of $\aff$ functions. Then we apply the computer program Atlas or PyCox \cite{GH} to find these $\aff$ functions. 
%\end{Rem}

%
%%%%%%%%%%%%%%%%%%%%%%%%%%%%%%%%%%%%%%%%%%%%%%%%%%%%%%%%%%%%%%%%%%%%
%

\section{Associated varieties for classical groups}\label{sec:HC}

%
%%%%%%%%%%%%%%%%%%%%%%%%%%%%%%%%%%%%%%%%%%%%%%%%%%%%%%%%%%%%%%%%%%%%
%

In this section, we will determine the associated varieties of simple highest weight HC modules for the classical Lie groups. We refer to \S2.2 for necessary preliminaries about associated varieties and HC-modules. 

\subsection{The associated varieties}

Recall that $q(x)$ is the dual partition of $p(x)$ for $x\in\mathrm{Seq}_n (\Gamma)$ (see \S2.6). The case of type $A$ is already known.

\begin{Thm}[{\cite[Thm. 6.4]{BX}}]\label{thm:associatedA}
	Let $L(\lambda)$ be a HC module of $SU(k,n-k)$ $(1\leq k<n)$ with $\lambda=(\lambda_1,\cdots,\lambda_n)\in\mathfrak{h}^*$. Set $q=q(\lambda)=(q_1, \cdots, q_{n})$. Then  $V(L(\lambda))=\overline{\mathcal{O}}_{k(\lambda)}$ with
	\[
	k(\lambda)=\begin{cases}
		q_2 & \text{ if } \lambda\  \text{is integral},\\
		\min\{k, n-k\} & \text{ otherwise}.
	\end{cases}
	\]
\end{Thm}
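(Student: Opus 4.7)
The plan is to identify $\overline{\mathcal O}_{k(\lambda)}$ by matching its dimension to $\gkd L(\lambda)$. By Yamashita's theorem recalled in \S2.2, $V(L(\lambda))$ is automatically one of the $\overline{\mathcal O}_j$, and by Proposition \ref{C: dimYk} with $c = 1$ and $h^\vee - 1 = n-1$, one has $\dim \overline{\mathcal O}_j = j(n-j)$, which is strictly increasing in $j$ on $[0, n/2]$. So it suffices to compute $\gkd L(\lambda)$ and read off the unique $j \in [0, r]$ with $j(n-j) = \gkd L(\lambda)$.

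The key structural input is that $L(\lambda)$ being HC for $SU(k, n-k)$ forces $\lambda_i - \lambda_{i+1} \in \mathbb{Z}_{\geq 1}$ for every $i \neq k$, since $\lambda - \rho$ must be $\mathfrak k$-dominant integral with $\mathfrak k = \mathfrak s(\mathfrak u(k) \oplus \mathfrak u(n-k))$. Consequently each block $[1,k]$ and $[k+1, n]$ lies in a single coset modulo $\mathbb{Z}$ and is strictly decreasing by integer steps. In particular, the decomposition $\Phi_{[\lambda]} = \bigsqcup_z \Phi_{(z)}$ from \S\ref{non-int} has at most two nonempty pieces, and the argument splits into two cases.

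If $\lambda$ is integral, both blocks lie in the same class and $\lambda$ is a union of two strictly decreasing sequences. Greene's theorem (Theorem \ref{gthm}) gives $p_1(\lambda) \leq 2$, so $p(\lambda)$ has at most two columns with $q_1 + q_2 = n$; moreover $q_1 \geq \max\{k, n-k\}$ forces $q_2 \leq r$. Applying the type $A$ case of Theorem \ref{thm2} together with Lemma \ref{def:F},
\[
\gkd L(\lambda) = \binom{n}{2} - \binom{q_1}{2} - \binom{q_2}{2} = q_1 q_2 = q_2(n - q_2),
\]
which matches $j(n-j)$ at $j = q_2$, proving $k(\lambda) = q_2$.

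If $\lambda$ is non-integral, the two blocks lie in distinct cosets $z_1 \neq z_2$, $\Phi_{[\lambda]}$ is a disjoint union of two type $A$ subsystems, and each $\lambda_{(z_i)}$ is a single strictly decreasing sequence of length $|K_{(z_i)}|$, so $F_a(\lambda_{(z_i)}) = \binom{|K_{(z_i)}|}{2}$. The type $A$ non-integral algorithm of \cite{BX} (analogous to Theorem \ref{thm:algo}) then gives
\[
\gkd L(\lambda) = \binom{n}{2} - \binom{k}{2} - \binom{n-k}{2} = k(n-k) = r(n-r),
\]
forcing $k(\lambda) = r = \min\{k, n-k\}$. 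The main conceptual point is the reduction step via the $\mathfrak k$-integrality of the HC condition; once it is established that at most two cosets of $\mathbb Z$ can occur and each corresponds to a strictly decreasing block, the rest is direct calculation using Greene's theorem and the type $A$ GK dimension formula.
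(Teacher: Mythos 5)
Your proposal is correct and follows essentially the same route as the paper: the statement itself is only quoted from \cite[Thm. 6.4]{BX}, but the strategy there and in this paper's proof of the analogous Theorem \ref{thm:associated} is exactly yours --- the HC condition forces two strictly decreasing, integrally spaced blocks, the GK dimension is computed by the type-$A$ formula together with Greene's theorem and Lemma \ref{def:F} (giving $q_2(n-q_2)$ in the integral case and $k(n-k)$ otherwise), and the orbit is then pinned down by matching against $\dim\overline{\mathcal{O}}_j=j(n-j)$ from Proposition \ref{C: dimYk}, using Yamashita's linear-chain result and the monotonicity of $j(n-j)$ on $[0,r]$. Your observation that $q_1\geq\max\{k,n-k\}$ guarantees $q_2\leq r$ is the right point to make the matching legitimate, so I see no gaps.
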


Now we state our criteria for the other classical Hermitian groups.

\begin{Thm}\label{thm:associated}
	Let $L(\lambda)$ be a HC module of $G$ with $  \lambda=(\lambda_1,\cdots,\lambda_n)\in\mathfrak{h}^*$. Set $q=q(\lambda^-)=(q_1, \cdots, q_{2n})$. Then $V(L(\lambda))=\overline{\mathcal{O}}_{k(\lambda)}$ with $ k(\lambda) $ given as follows.

	\begin{itemize}
		\item [(1)] $ G=Sp(n,\mathbb{R}) $ with $ n\geq 2 $. Then 
		\[
		k(\lambda)=\begin{cases}
			2q_2\od & \text{ if } \lambda_1\in\mathbb{Z},\\
			2q_2\ev+1 & \text{ if } \lambda_1\in\frac12+\mathbb{Z},\\
			n & \text{ otherwise}.
		\end{cases}
		\]
		\item [(2)] $ G=SO^*(2n) $ with $ n\geq 4 $. Then 
		\[
		k(\lambda)=\begin{cases}
			q_2\ev & \text{ if } \lambda_1\in\frac12\mathbb{Z},\\
			\left\lfloor \frac{n}{2} \right \rfloor & \text{ otherwise}.
		\end{cases}
		\]
		\item [(3)] $ G=SO(2,2n-1) $ with $ n\geq 3 $.   Then\[
		k(\lambda)=\begin{cases}
			0& \text{ if } \lambda_1-\lambda_2\in\mathbb{Z}, \lambda_1>\lambda_2,\\
			1 & \text{ if } \lambda_1-\lambda_2\in\frac12+\mathbb{Z}, \lambda_1>0,\\
			2 & \text{ otherwise}.
		\end{cases}
		\]
		\item [(4)] $ G=SO(2,2n-2) $ with $ n\geq 4 $.   Then\[
		k(\lambda)=\begin{cases}
			0& \text{ if } \lambda_1-\lambda_2\in\mathbb{Z}, \lambda_1>\lambda_2,\\
			1 &  \text{ if } \lambda_1-\lambda_2\in \mathbb{Z}, -|\lambda_n|<  \lambda_1\leq \lambda_2\\
			2 & \text{ otherwise}.
		\end{cases}
		\]
			\end{itemize}
\end{Thm}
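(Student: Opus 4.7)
The plan is to combine three ingredients: (a) Yamashita's theorem (Lemma 2.5), which gives $V(L(\lambda)) = \overline{\mathcal{O}}_{k(\lambda)}$ and thus determines $k(\lambda)$ from $\gkd L(\lambda) = \dim \overline{\mathcal{O}}_{k(\lambda)}$; (b) the closed form $\dim \overline{\mathcal{O}}_k = k(h^\vee - 1) - k(k-1)c$ from Proposition 2.8 with constants read from Table \ref{tab1}; and (c) the algorithm for $\gkd L(\lambda)$ in Theorem \ref{thm:algo}. I will solve the resulting quadratic in $k$ case by case.

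The key structural input will be the $\mathfrak{k}$-dominance of $\lambda - \rho$. For $Sp(n,\mathbb{R})$ and $SO^*(2n)$, the compact simple roots include all of $\epsilon_i - \epsilon_{i+1}$, which forces $\lambda_1 > \lambda_2 > \cdots > \lambda_n$ strictly within a single coset of $\mathbb{Z}$. Hence $\lambda^-$ is a concatenation of two strictly decreasing blocks of length $n$, and Greene's theorem (Theorem \ref{gthm}) implies that $p(\lambda^-)$ has at most two columns, i.e., $p(\lambda^-) = (2^a, 1^b)$ with $q(\lambda^-) = (a+b, a)$. Substituting into the explicit formulas of Lemma \ref{def:F} for $F_b$ or $F_d$, the quantity $\gkd L(\lambda)$ becomes a quadratic in the single parameter $q_2\od$ (integral case) or $q_2\ev$ (half-integral case), and matching with $\dim \overline{\mathcal{O}}_k$ will recover $k(\lambda) = 2q_2\od$ or $k(\lambda) = 2q_2\ev + 1$ respectively. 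For non-half-integral $\lambda$, the root subsystem $\Phi_{[\lambda]}$ collapses to a single type-$A_{n-1}$ factor with $p(\lambda) = (1^n)$, and the same matching will pin $k(\lambda)$ to $n$ or $\lfloor n/2 \rfloor$.

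For $SO(2, 2n-1)$ and $SO(2, 2n-2)$, the compact simple roots omit $\alpha_1$, so $\mathfrak{k}$-dominance constrains only $\lambda_2, \ldots, \lambda_n$ while $\lambda_1$ is essentially free. Since $r = 2$, the possible values of $k$ are $\{0, 1, 2\}$, and the trichotomy in the theorem will be driven by how $\Phi_{[\lambda]}$ decomposes via Proposition \ref{rclem5}. When $\lambda_1$ lies in the same coset as $\lambda_2$ and sits strictly above $\lambda_2$, $\lambda^-$ becomes a single long decreasing block and $\gkd L(\lambda) = 0$, giving $k = 0$. When $\lambda_1$ lies in the opposite $\tfrac{1}{2}\mathbb{Z}$-coset (case 3) or in the same coset but intermediate between $\lambda_2$ and $-|\lambda_n|$ (case 4), $\Phi_{[\lambda]}$ splits off a rank-one component whose $F$-contribution, together with that of the dominant component, sums to $\dim \overline{\mathcal{O}}_1$, giving $k = 1$. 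The remaining generic configurations will saturate at $\dim \overline{\mathcal{O}}_2$, forcing $k = 2$. The elementary identity $F_d(x^-) = 0$ for singletons from Example \ref{eq:F1} is the workhorse here.

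The main obstacle will be the case-by-case bookkeeping in (3) and (4): one must juggle the sign and magnitude conditions on $\lambda_1$ relative to $\lambda_2$ and $|\lambda_n|$, decompose $\Phi_{[\lambda]}$ into the correct orthogonal subsystems, and verify that $\gkd L(\lambda)$ lands on exactly one of $\{0, h^\vee - 1, \dim\overline{\mathcal{O}}_2\}$. The subtlest subcase in (4) will be handling $\lambda_n < 0$, where the naive decreasing structure of $\lambda^-$ breaks at the junction between its two halves, requiring a more delicate application of Greene's theorem to identify $p(\lambda^-)$ exactly.
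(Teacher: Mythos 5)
Your framework is exactly the paper's: constrain $\lambda$ by $\Phi_c^+$-dominance, evaluate $\gkd L(\lambda)$ through Theorem \ref{thm:algo}, Greene's theorem and the column formulas of Lemma \ref{def:F}, then match against $\dim\overline{\mathcal{O}}_k$ from Proposition \ref{C: dimYk} and Table \ref{tab1} (which is monotone in $k$, so the match is unique); this is precisely how Lemmas \ref{prop:1}--\ref{prop:4} proceed, and your treatment of $Sp(n,\mathbb{R})$ and $SO^*(2n)$ -- two strictly decreasing blocks forcing $q(\lambda^-)=(q_1,q_2)$, then quadratic matching -- is correct and the same as the paper's.

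However, the mechanism you give for the intermediate cases of (3) and (4) is wrong as stated. In (3), the opposite-coset condition alone does not yield $k=1$: the rank-one factor $\{\pm\ep_1\}$ contributes $F_b((\lambda_1)^-)=1$ only when $\lambda_1>0$ (Example \ref{eq:F1}); when $\lambda_1\le 0$ it contributes $0$, so $\gkd L(\lambda)=2n-1$ and $k=2$, which is why the theorem's condition includes $\lambda_1>0$. More seriously, in (4) the $k=1$ case has $\lambda_1-\lambda_2\in\mathbb{Z}$, so $\Phi_{[\lambda]}$ does \emph{not} split off a rank-one component: all coordinates lie in one coset and the relevant integral subsystem is a single type-$D$ factor through $\ep_1,\dots,\ep_n$. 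If you nonetheless treated $(\lambda_1)$ as a separate singleton, its $F_d$-contribution would be $0$ and the total would be $n^2-n-(n-1)(n-2)=2n-2=\dim\overline{\mathcal{O}}_2$, i.e.\ the wrong orbit. The correct computation (Lemma \ref{prop:4}) applies Greene's theorem to the full sequence $\lambda^-$: for $-|\lambda_n|<\lambda_1\le\lambda_2$ one finds $q(\lambda^-)=(2n-2,2)$ or $(2n-3,3)$, hence $F_d(\lambda^-)=n^2-3n+3$ and $\gkd L(\lambda)=2n-3=\dim\overline{\mathcal{O}}_1$. So keep your engine, but repair the case analysis: the split-off-singleton argument is legitimate only when $\lambda_1-\lambda_2$ is not integral (resp.\ not in $\tfrac12\mathbb{Z}$), and the $k=1$ versus $k=2$ dichotomy in (3) and (4) is decided by the sign/magnitude of $\lambda_1$, read off either from $F_b((\lambda_1)^-)$ or from the exact shape of $p(\lambda^-)$ for the whole sequence.
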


The idea of the proof is straightforward. First we compute the GK dimension of $L(\lambda)$ using Theorem \ref{thm:algo}, then compare it with the formula in Proposition \ref{C: dimYk}. The column algorithm in Lemma \ref{def:F} will be helpful.

% We will achieve this in a case-by-case fashion.

\subsection{The GK dimensions}

Let $ \Phi_c $ be the set of compact roots, which is the root system of $ (\mf{k},\mf{h}) $.  Denote $  \Phi_c^+= \Phi_c\cap \Phi^+ $. Then it is generated by $\Delta_c:=\Delta\cap\Phi_c^+$. Recall that $ L(\lambda) $ is a HC module if and only if $\lambda$ is \textit{$\Phi_c^+$-dominant}, that is, $ \bil{\lambda}{\alpha} \in\mathbb{Z}_{>0}$ for all $ \alpha\in  \Phi_c^+$. We always put $q=q(\lambda^-)=(q_1, q_2\cdots, q_{2n})$ when $\Phi_{[\lambda]}$ is a root system of type $B$, $C$ or $D$.

\begin{Lem}\label{prop:1}
Let $ G=\Sp $ $(n\geq 2) $ and $ L(\lambda) $ be a HC module of $G$. %Then
\begin{itemize}
	\item [(1)] If $ \lambda_1\in\mathbb{Z} $, then $ \gkd L(\lambda)=q_2\od(2n+1-2q_2\od)$.
	\item [(2)] If $ \lambda_1\in \frac12+\mathbb{Z} $, then $ \gkd L(\lambda)=n+q_2\ev(2n-1-2q_2\ev)$.
	\item [(3)] If $ \lambda_1\notin \frac12 \mathbb{Z} $, then $ \gkd L(\lambda)=\frac{n(n+1)}{2}$.
\end{itemize}
\end{Lem}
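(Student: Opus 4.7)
The plan is to feed everything into Theorem~\ref{thm:algo}(2) and exploit how restrictive the Harish-Chandra condition is. Since the compact simple roots of $\mathfrak{sp}(n,\mathbb{R})$ are $\Delta_c=\{\ep_i-\ep_{i+1}:1\le i<n\}$, the requirement $\bil{\lambda}{\alpha}\in\mathbb{Z}_{>0}$ for $\alpha\in\Phi_c^+$ forces $\lambda_1>\lambda_2>\cdots>\lambda_n$ with all consecutive differences in $\mathbb{Z}_{>0}$. In particular every $\lambda_i$ lies in a single coset $z_0+\mathbb{Z}$, so by Proposition~\ref{rclem5} exactly one subsystem $\Phi_{(z)}$ of $\Phi_{[\lambda]}$ is non-empty: of type $C_n$ if $z_0=0$, of type $D_n$ if $z_0=\frac{1}{2}$, and of type $A_{n-1}$ otherwise. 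Accordingly, Theorem~\ref{thm:algo}(2) collapses to $\gkd L(\lambda)=n^2-F_b(\lambda^-)$ in case (1), to $\gkd L(\lambda)=n^2-F_d(\lambda^-)$ in case (2), and to $\gkd L(\lambda)=n^2-F_a(\lambda_{(z_0)})$ in case (3), where $\lambda_{(z_0)}=(\lambda_1,\ldots,\lambda_n)$ up to negation-reversal (which does not change the Robinson--Schensted shape).

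For cases (1) and (2), the key shape observation is that $\lambda^-=(\lambda_1,\ldots,\lambda_n,-\lambda_n,\ldots,-\lambda_1)$ is the concatenation of two strictly decreasing subsequences. By Greene's theorem (Theorem~\ref{gthm}) this bounds the number of columns of $p(\lambda^-)$ by two, and since $|\lambda^-|=2n$ the column lengths satisfy $q_1+q_2=2n$. Substituting into the column-side expressions in Lemma~\ref{def:F} reduces $F_b$ and $F_d$ to
\[
F_b(\lambda^-)=(q_1\od)^2+q_2\od(q_2\od-1),\qquad F_d(\lambda^-)=q_1\ev(q_1\ev-1)+(q_2\ev)^2.
\]
A brief case check on the parity of $q_2$ gives $q_1\od=n-q_2\od$ and $q_1\ev=n-q_2\ev$, after which a routine expansion yields $n^2-F_b(\lambda^-)=q_2\od(2n+1-2q_2\od)$ and $n^2-F_d(\lambda^-)=n+q_2\ev(2n-1-2q_2\ev)$, which are precisely the formulas in (1) and (2).

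For case (3), the sequence $\lambda_{(z_0)}$ is itself strictly decreasing of length $n$, so its Robinson--Schensted shape is the single column $(1^n)$ and $F_a(\lambda_{(z_0)})=\sum_{i=1}^{n}(i-1)=n(n-1)/2$. Subtracting from $n^2=|\Phi^+|$ gives $n(n+1)/2$, confirming (3). I do not foresee a serious obstacle here: the substance is the two-step reduction (HC dominance leaves a single active $\Phi_{(z)}$; the two strictly decreasing halves of $\lambda^-$ cut the number of columns to at most two), after which only even/odd box bookkeeping remains.
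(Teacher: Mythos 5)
Your proposal is correct and takes essentially the same approach as the paper's proof: both reduce via the HC-dominance condition to a single term of Theorem \ref{thm:algo}, use Greene's theorem to see that $p(\lambda^-)$ has at most two columns (resp. that $p(\lambda)$ is a single column in case (3)), and then evaluate the column-side expressions of Lemma \ref{def:F} using $q_1\od+q_2\od=q_1\ev+q_2\ev=n$. Your added details (the parity check behind this identity and the negation-reversal remark in case (3)) merely make explicit what the paper leaves implicit.
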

\begin{proof}
In this case, $ \Phi_c^+=\{ \ep_i-\ep_j\mid i<j \}$ (e.g., \cite{EHW}). We get $\lambda_1>\cdots>\lambda_n$ and $\lambda_i-\lambda_j\in\mathbb{Z}$ since $\lambda$ is $\Phi_c^+$-dominant. 

If $\lambda_1\in \mathbb{Z} $, Theorem \ref{thm:algo} yields $\gkd L(\lambda)=n^2-F_b(\lambda^-)$. By Greene's Theorem, $ q(\lambda^-)=(q_1, q_2)$. It follows from Lemma \ref{def:F} that
\[
\gkd L(\lambda)=n^2-(q_1\od)^2-q_2\od(q_2\od-1)=q_2\od(2n+1-2q_2\od),
\]
where the last equality follows from the fact that $q_1\od+q_2\od=n$.

If $\lambda_1\in \frac12+\mathbb{Z} $, then $\gkd L(\lambda)=n^2-F_d(\lambda^-)$. Applying Lemma \ref{def:F}, a similar argument shows that
\[
\gkd L(\lambda)=n^2-q_1\ev(q_1\ev-1)-(q_2\ev)^2=n+q_2\ev(2n-1-2q_2\ev).
\]
%where the last equality follows from $q_1\ev+q_2\ev=n$.

If $\lambda_1\not\in \frac12 \mathbb{Z} $, then $\gkd L(\lambda)=n^2-F_a(\lambda)$. By Greene's Theorem and Lemma \ref{def:F}, we have 
\[
\gkd L(\lambda)=n^2-F_a(\lambda)=n^2-\frac{n(n-1)}{2} =\frac{n(n+1)}{2}.
\]
\end{proof}

\begin{Lem}\label{prop:2}
	Let $ G=SO^*(2n)$ $(n\geq 4) $ and $ L(\lambda) $ be a HC module of $G$. %Then
	\begin{itemize}
		\item [(1)] If $ \lambda_1\in \frac12\mathbb{Z} $, then $ \gkd L(\lambda)=q_2\ev(2n-1-2q_2\ev)$.
		\item [(2)] If $ \lambda_1\notin \frac12 \mathbb{Z} $, then $ \gkd L(\lambda)=\frac{n(n-1)}{2}$.
	\end{itemize}
\end{Lem}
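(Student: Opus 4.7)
The plan is to follow the template of Lemma \ref{prop:1}. For $G = SO^*(2n)$ the compact positive root system is $\Phi_c^+ = \{\ep_i - \ep_j \mid i < j\}$, so the $\Phi_c^+$-dominance of $\lambda$ forces $\lambda_1 > \lambda_2 > \cdots > \lambda_n$ strictly with all $\lambda_i$ lying in a common coset $z_0 + \mathbb{Z}$. I then split on whether $z_0 \in \tfrac{1}{2}\mathbb{Z}$ or not.

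For part (1), with $\lambda_1 \in \tfrac{1}{2}\mathbb{Z}$, both subcases $z_0 = 0$ and $z_0 = \tfrac{1}{2}$ reduce the decomposition in Proposition \ref{rclem5} to a single $D_n$ component, and Theorem \ref{thm:algo}(3) yields $\gkd L(\lambda) = n^2 - n - F_d(\lambda^-)$. The key structural observation is that $\lambda^-$ is the concatenation of two strictly decreasing blocks, namely $(\lambda_1, \ldots, \lambda_n)$ and $(-\lambda_n, \ldots, -\lambda_1)$; Greene's theorem (Theorem \ref{gthm}) therefore forces the longest weakly increasing subsequence of $\lambda^-$ to have length at most $2$, so $p(\lambda^-)$ has at most two columns, $q_i = 0$ for $i \geq 3$, and $q_1 + q_2 = 2n$. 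The column form of $F_d$ in Lemma \ref{def:F} then collapses to $F_d(\lambda^-) = q_1\ev(q_1\ev - 1) + (q_2\ev)^2$. The remaining work is algebraic: substitute $q_1\ev = \lceil q_1/2 \rceil$ and $q_2\ev = \lfloor q_2/2 \rfloor$ from \eqref{eq:ev-od}, together with $q_1 = 2n - q_2$, and verify the identity $n^2 - n - F_d(\lambda^-) = q_2\ev(2n - 1 - 2q_2\ev)$.

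For part (2), with $\lambda_1 \notin \tfrac{1}{2}\mathbb{Z}$, one can pick the coset representative with $0 < \Ree(z_0) < \tfrac{1}{2}$, and then $K_{(z_0)} = \{1, \ldots, n\}$, $K_{(-z_0)} = \emptyset$. Thus $\Phi_{(z_0)}$ is of type $A_{n-1}$ and every other $\Phi_{(z)}$ is empty, so Theorem \ref{thm:algo}(3) gives $\gkd L(\lambda) = n^2 - n - F_a(\lambda)$. Since $\lambda$ is strictly decreasing, $p(\lambda) = (1^n)$ and Lemma \ref{def:F} produces $F_a(\lambda) = \binom{n}{2}$, giving $\gkd L(\lambda) = n(n-1)/2$ immediately.

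The main obstacle is the parity bookkeeping at the end of part (1): the formulas for $q_i\ev$ depend on whether $q_i$ is even or odd, so the final identity must be checked separately for $q_2$ even and $q_2$ odd. Pleasantly both parities collapse to the same closed form, which is exactly why no parity dichotomy appears in the statement; but this only becomes apparent after the calculation. Nothing else should pose a real difficulty --- the two-column structure of $p(\lambda^-)$ is the decisive input, and everything else is a straightforward transcription of Theorem \ref{thm:algo} and Lemma \ref{def:F}.
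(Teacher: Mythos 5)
Your proof is correct and follows essentially the same route as the paper: $\Phi_c^+$-dominance forces $\lambda$ strictly decreasing in a single coset, Theorem \ref{thm:algo} reduces everything to $n^2-n-F_d(\lambda^-)$ or $n^2-n-F_a(\lambda)$, and the two-column structure of $p(\lambda^-)$ (via Greene's theorem) plus Lemma \ref{def:F} finishes the computation. The paper's proof is simply terser, leaving the Greene's-theorem observation and the parity check implicit, whereas you spell them out.
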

\begin{proof} In this case,  $ \Phi_c^+ =\{ \ep_i-\ep_j\mid i<j \}$. We get $\lambda_1>\cdots>\lambda_n$ and $\lambda_i-\lambda_j\in\mathbb{Z}$.

If $ \lambda_1\in \frac12\mathbb{Z} $, Theorem \ref{thm:algo} implies $ \gkd L(\lambda)=n^2-n-F_d(\lambda^-)$, which is equal to  $ q_2\ev(2n-1-2q_2\ev) $ in view of Lemma \ref{def:F}.

If $ \lambda_1\notin \frac12 \mathbb{Z} $, then $\gkd L(\lambda)=n^2-n-F_a(\lambda)=\frac{n(n-1)}{2}$ using Theorem \ref{thm:algo} and Lemma \ref{def:F}.
\end{proof}

\begin{Lem}\label{prop:3}
	Let $ G=SO(2,2n-1)$ $(n\geq 3) $ and $ L(\lambda) $ be its HC module. %Then
	\begin{itemize}
		%		\item If $ \lambda_1\in\mathbb{Z} $, then $ \gkd L(\lambda)=k(n-\frac{k-1}{2}) \text{ with } k=2q_2\od$.
		\item [(1)] If $ \lambda_1-\lambda_2\in \mathbb{Z} $, then $ \gkd L(\lambda)=\begin{cases}
		0&\text{ if } \lambda_1>\lambda_2,\\
		2n-1 & \text{ if } \lambda_1\leq \lambda_2.
		\end{cases}$
		\item [(2)] If $ \lambda_1-\lambda_2\in \frac12 +\mathbb{Z} $, then $ \gkd L(\lambda)=\begin{cases}
		2n-2&\text{ if } \lambda_1>0,\\
		2n-1 & \text{ if } \lambda_1\leq 0.
		\end{cases}$
		\item [(3)] If $ \lambda_1-\lambda_2\notin \frac12 \mathbb{Z} $, then $ \gkd L(\lambda)=2n-1$.
	\end{itemize}
\end{Lem}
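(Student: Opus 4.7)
My plan is to apply Theorem \ref{thm:algo}(1) with $\Phi=B_n$, giving
\[
\gkd L(\lambda)=n^2-F_b(\lambda_{(0)}^-)-F_b(\lambda_{(\frac{1}{2})}^-)-\sum_{0<\Ree(z)<\frac{1}{2}} F_a(\lambda_{(z)}),
\]
and then to evaluate each term via Greene's theorem (Theorem \ref{gthm}) together with Lemma \ref{def:F}.

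First I would record that for $G=SO(2,2n-1)$ the compact positive roots are $\Phi_c^+=\{\ep_i\pm\ep_j\mid 2\le i<j\le n\}\cup\{\ep_i\mid 2\le i\le n\}$, so $\Phi_c^+$-dominance of $\lambda$ forces $\lambda_2>\lambda_3>\cdots>\lambda_n>0$, $2\lambda_i\in\mathbb{Z}_{>0}$ for $i\ge 2$, and $\lambda_i-\lambda_j\in\mathbb{Z}$ for $i,j\ge 2$; hence $\lambda_2,\ldots,\lambda_n$ all lie in a single coset of $\mathbb{Z}$ in $\frac{1}{2}\mathbb{Z}$, while $\lambda_1$ is unrestricted. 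A recurring building block is the subsequence $(\lambda_2,\ldots,\lambda_n,-\lambda_n,\ldots,-\lambda_2)$: it is strictly decreasing of length $2n-2$, so its RSK shape is $(1^{2n-2})$, and Lemma \ref{def:F} evaluates $F_b$ on it as $(n-1)^2$.

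Next I would handle the three cases. In case (1), $\lambda_1-\lambda_2\in\mathbb{Z}$, so all components sit in one coset and only one of $\lambda_{(0)},\lambda_{(\frac{1}{2})}$ is non-empty, equal to $\lambda$. If $\lambda_1>\lambda_2$, then $\lambda^-$ itself is strictly decreasing, $p(\lambda^-)=(1^{2n})$, $F_b(\lambda^-)=n^2$, and $\gkd L(\lambda)=0$. If $\lambda_1\le\lambda_2$, Greene's theorem gives $q_1(\lambda^-)=2n-2$ (the extra entries $\lambda_1,-\lambda_1$ at the two ends cannot extend the main chain); the residual two entries force $q_2=2$ when $\lambda_1>0$ via the decreasing pair $(\lambda_1,-\lambda_1)$, or $q_2=q_3=1$ when $\lambda_1\le 0$, yielding shapes $(2,2,1^{2n-4})$ or $(3,1^{2n-3})$; in either shape an odd-box count via Lemma \ref{def:F} returns $F_b(\lambda^-)=(n-1)^2$, so $\gkd L(\lambda)=2n-1$. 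In case (2), $\lambda_1-\lambda_2\in\frac{1}{2}+\mathbb{Z}$, so the index set splits as $\{1\}\sqcup\{2,\ldots,n\}$ across the two cosets: one of $\lambda_{(0)},\lambda_{(\frac{1}{2})}$ equals $(\lambda_2,\ldots,\lambda_n)$ giving $F_b=(n-1)^2$, while the other equals $(\lambda_1)$ giving $F_b((\lambda_1,-\lambda_1))$, which equals $1$ if $\lambda_1>0$ and $0$ otherwise by Example \ref{eq:F1}. In case (3), $\lambda_1-\lambda_2\notin\frac{1}{2}\mathbb{Z}$, so the coset of $\lambda_1$ corresponds to a new $z$ with $K_{(z)}=\{1\}$ and $K_{(-z)}=\emptyset$, forcing $\Phi_{(z)}=\emptyset$, $\lambda_{(z)}=(\lambda_1)$, and $F_a(\lambda_{(z)})=0$; only the $(n-1)^2$ term survives.

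The main obstacle is the sub-case of (1) with $\lambda_1\le\lambda_2$: one must pin down the RSK shape of $\lambda^-$ exactly, by analyzing how the sign of $\lambda_1$ affects whether $\lambda_1$ and $-\lambda_1$ (at positions $1$ and $2n$ of $\lambda^-$) can belong to a common strictly decreasing chain, and then check that the two resulting shapes $(2,2,1^{2n-4})$ and $(3,1^{2n-3})$ give the same $F_b=(n-1)^2$. Using the column version in Lemma \ref{def:F} simplifies this: in either shape $q_1=2n-2$ contributes $(q_1\od)^2=(n-1)^2$, while the two remaining boxes in the subsequent columns contribute $0$, confirming the uniform answer.
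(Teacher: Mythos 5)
Your proposal is correct and follows essentially the same route as the paper: Theorem \ref{thm:algo} for $\Phi=B_n$, Greene's theorem to pin down the shapes $q(\lambda^-)$, $q(x^-)$, $q(y^-)$, and the column form of Lemma \ref{def:F} to evaluate $F_b$ and $F_a$, with the identical three-way case split. The one slip is your claim in case (1) that $\lambda_1\le 0$ forces $q_2=q_3=1$ rather than $q_2=2$ — for instance $\lambda=(0,2,1)$ with $n=3$ has $\lambda_1\le\lambda_2$ and $\lambda_1\le 0$, yet $\lambda^-=(0,2,1,-1,-2,0)$ splits into the two decreasing chains $(0,-1,-2)$ and $(2,1,0)$, so $q(\lambda^-)=(4,2)$ — but this is harmless because, as you note (and as the paper does), both admissible shapes $(2n-2,2)$ and $(2n-2,1^2)$ give $F_b(\lambda^-)=(n-1)^2$.
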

\begin{proof} 
	Now $ \Phi_c^+=\{ \ep_i\pm \ep_j\mid   2\leq  i<j\leq n \}\cup \{\ep_i \mid 2\leq  i\leq n \} $. Then $ \lambda_2>\lambda_3>\cdots>\lambda_n>0$, $ \lambda_2\in \frac12\mathbb{Z} $ and $ \lambda_i-\lambda_j\in \mathbb{Z} $ for all $ i,j\geq 2 $. Set $x=(\lambda_1)$ and $y=(\lambda_2, \cdots, \lambda_n)$.
	
	If $ \lambda_1-\lambda_2\in \mathbb{Z} $, Theorem \ref{thm:algo} shows that $\gkd L(\lambda)=n^2-F_b(\lambda^-)$. By Greene's Theorem, if $ \lambda_1>\lambda_2 $, $ q(\lambda^-)=(2n) $; if $ \lambda_1\leq \lambda_2 $, $ q(\lambda^-)=(2n-2,2) $ or $ (2n-2, 1^2)  $. Then (1) follows from Lemma \ref{def:F}.
	
	If $ \lambda_1-\lambda_2\in \frac12+\mathbb{Z} $, then $\gkd L(\lambda)=n^2-F_b(x^-)-F_b(y^-)$. It is easy to verify that $ q(x^-)=(1^2)$ (when $\lambda_1\leq 0$) or $(2)$ (when $\lambda_1>0$). Moreover, $q(y^-)=(2n-2)$. Lemma \ref{def:F} yields (2). 
	
	If $ \lambda_1-\lambda_2\notin \frac12\mathbb{Z} $, then  $\gkd L(\lambda)=n^2-F_a(x)-F_b(y^-)$. We obtain $q(x)=(1)$ and $q(y^-)=(2n-2)$. Then (3) follows from Lemma \ref{def:F}.
\end{proof}

\begin{Lem}\label{prop:4}
	Let $ G=SO(2,2n-2)$ $(n\geq 4) $ and $ L(\lambda) $ be its HC module. %Then
	\begin{itemize}
		\item [(1)] If $ \lambda_1-\lambda_2\in \mathbb{Z} $, then $ \gkd L(\lambda)=\begin{cases}
		0&\text{ if } \lambda_1>\lambda_2,\\
		2n-3 & \text{ if } -|\lambda_n|<  \lambda_1\leq \lambda_2,\\
		2n-2 &\text{ if }   \lambda_1\leq -|\lambda_n|.
		\end{cases}$
		\item [(2)] If $ \lambda_1-\lambda_2\notin \mathbb{Z} $, then $ \gkd L(\lambda)=2n-2$.
	\end{itemize}
\end{Lem}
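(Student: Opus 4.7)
The plan is to apply Theorem~\ref{thm:algo}(3) and compute the contributions from each coset piece $\lambda_{(z)}$ via Greene's theorem and the column form of Lemma~\ref{def:F}, paralleling the proofs of Lemmas~\ref{prop:1}--\ref{prop:3}. The $\Phi_c^+$-dominance of $\lambda$ for $SO(2,2n-2)$ (with $\Phi_c^+=\{\epsilon_i\pm\epsilon_j:2\leq i<j\leq n\}$) yields $\lambda_2>\lambda_3>\cdots>\lambda_{n-1}>|\lambda_n|$ and $\lambda_i\pm\lambda_j\in\mathbb{Z}$ for $2\leq i<j\leq n$; in particular $\lambda_2\in\frac{1}{2}\mathbb{Z}$ and the entries $\lambda_2,\ldots,\lambda_n$ share a common $\mathbb{Z}$-coset.

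Case (2) ($\lambda_1-\lambda_2\notin\mathbb{Z}$) splits $\lambda$ into the singleton $(\lambda_1)$ and $y:=(\lambda_2,\ldots,\lambda_n)$, which lie in distinct cosets. By Example~\ref{eq:F1} the singleton contributes nothing, so Theorem~\ref{thm:algo}(3) reduces to $\gkd L(\lambda)=n^2-n-F_d(y^-)$. Applying Greene's theorem to $y^-$, the chain $\lambda_2>\lambda_3>\cdots>\lambda_{n-1}>|\lambda_n|$ produces $q(y^-)=(2n-2)$ if $\lambda_n>0$ and $q(y^-)=(2n-3,1)$ otherwise; the column form of Lemma~\ref{def:F} gives $F_d(y^-)=(n-1)(n-2)$ in both cases, hence $\gkd L(\lambda)=2n-2$.

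For case (1) ($\lambda_1-\lambda_2\in\mathbb{Z}$), $\lambda$ is integral and Proposition~\ref{prop:integral-algo-dn} gives $\gkd L(\lambda)=n^2-n-F_d(\lambda^-)$. When $\lambda_1>\lambda_2$, the chain $\lambda_1>\lambda_2>\cdots>\lambda_{n-1}>|\lambda_n|$ together with $\lambda_1+\lambda_n>\lambda_{n-1}+\lambda_n>0$ forces $\lambda$ to be dominant regular integral, so $L(\lambda)$ is finite-dimensional and $\gkd L(\lambda)=0$; the Greene computation produces $q_1(\lambda^-)\in\{2n,2n-1\}$ and $F_d(\lambda^-)=n(n-1)$, consistent with this. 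When $\lambda_1\leq-|\lambda_n|$, the longest strictly decreasing subsequence of $\lambda^-$ has length $2n-2$ (built from $\lambda_2,\ldots,\lambda_{n-1}$ together with one of $\pm\lambda_n$ and $-\lambda_{n-1},\ldots,-\lambda_2$), while the pair $(\lambda_1,-\lambda_1)$ satisfies $\lambda_1<-\lambda_1$ and each entry must lie in its own decreasing subsequence; this gives $q(\lambda^-)=(2n-2,1,1)$, hence $F_d(\lambda^-)=(n-1)(n-2)$ and $\gkd L(\lambda)=2n-2$.

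The main obstacle is the middle sub-case $-|\lambda_n|<\lambda_1\leq\lambda_2$, where the shape $p(\lambda^-)$ is not constant (for instance $\lambda=(3,3,2,1)$ gives $q(\lambda^-)=(2n-2,2)$ while $\lambda=(1,3,2,-1)$ gives $q(\lambda^-)=(2n-3,3)$) but $F_d(\lambda^-)$ nevertheless equals $(n-1)(n-2)+1$. The plan is to show directly via Greene's theorem that throughout this region $q_1+q_2=2n$ and $q_1\in\{2n-3,2n-2\}$: the lower bound $q_1\geq 2n-3$ comes from the chain $\lambda_2>\cdots>\lambda_{n-1}>-\lambda_n>-\lambda_{n-1}>\cdots>-\lambda_2$ using $\lambda_{n-1}>|\lambda_n|$, the upper bound $q_1\leq 2n-2$ comes from $\lambda_1\leq\lambda_2$ and $-\lambda_1\geq-\lambda_2$ blocking both $\lambda_1$ and $-\lambda_1$ from extending the main chain at either end, and $q_1+q_2=2n$ is witnessed by an explicit partition of all $2n$ entries of $\lambda^-$ into two decreasing subsequences. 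In either possibility for $(q_1,q_2)$ the column form of Lemma~\ref{def:F} yields $q_1^{\mathrm{ev}}=n-1$ and $q_2^{\mathrm{ev}}=1$, so $F_d(\lambda^-)=(n-1)(n-2)+1$ and $\gkd L(\lambda)=2n-3$.
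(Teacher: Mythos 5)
Your proposal follows essentially the same route as the paper's proof: reduce to Theorem~\ref{thm:algo}(3), read off the shape $q(\lambda^-)$ (or $q(y^-)$) via Greene's theorem using the $\Phi_c^+$-dominance chain $\lambda_2>\cdots>\lambda_{n-1}>|\lambda_n|$, and evaluate $F_d$ by the column form of Lemma~\ref{def:F}. Your final values agree with the lemma in every case, and your treatment of the middle region $-|\lambda_n|<\lambda_1\le\lambda_2$ (proving $q_1+q_2=2n$ with $q_1\in\{2n-3,2n-2\}$ and observing that $F_d$ is insensitive to which of $(2n-2,2)$, $(2n-3,3)$ occurs) actually supplies more justification than the paper, which simply asserts the two possible shapes.

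One intermediate claim is wrong, though harmlessly so. In the subcase $\lambda_1\le-|\lambda_n|$ you assert that the longest strictly decreasing subsequence of $\lambda^-$ has length $2n-2$ and that $q(\lambda^-)=(2n-2,1,1)$; but the chain you describe ($\lambda_2,\dots,\lambda_{n-1}$, one of $\pm\lambda_n$, then $-\lambda_{n-1},\dots,-\lambda_2$) has only $2n-3$ terms, and indeed when $\lambda_n\le 0$ one cannot do better, since $\lambda_n$ and $-\lambda_n$ form a weakly increasing pair: for example $\lambda=(-1,5,4,0)$ with $n=4$ gives $q(\lambda^-)=(5,1,1,1)=(2n-3,1^3)$, not $(2n-2,1,1)$. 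The paper records both possibilities $(2n-2,1^2)$ and $(2n-3,1^3)$. Since $q_1\ev=n-1$ in either case and the extra columns contribute nothing to $F_d$, your conclusion $F_d(\lambda^-)=(n-1)(n-2)$ and $\gkd L(\lambda)=2n-2$ still stands; you should just split this subcase according to the sign of $\lambda_n$ as in the $\lambda_1>\lambda_2$ subcase.
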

\begin{proof} 
	Now $ \Phi_c^+=\{ \ep_i\pm\ep_j\mid 2\leq i<j\leq n \}$. We obtain
	$ \lambda_2>\lambda_3>\cdots>|\lambda_n|$, $ \lambda_2\in \frac12\mathbb{Z} $ and $ \lambda_i-\lambda_j\in \mathbb{Z} $ for all $ i,j\geq 2 $. Set $x=(\lambda_1)$ and $y=(\lambda_2, \cdots, \lambda_n)$. 

	First assume that $ \lambda_1-\lambda_2\in \mathbb{Z} $. Theorem \ref{thm:algo} yields $ \gkd L(\lambda)=n^2-n-F_d(\lambda^-)  $. If $ \lambda_1>\lambda_2 $, then $ q(\lambda^-)=(2n)$ (when $\lambda_n>0$) or $(2n-1, 1)$ (when $\lambda_n\leq 0$). In both cases, $F_d(\lambda^-)=n^2-n$. If $ -|\lambda_n|< \lambda_1\leq \lambda_2 $, then $ q(\lambda^-)=(2n-2, 2)$ or $(2n-3,3)$, we get $F_d(\lambda^-)=n^2-3n+3$. If $ \lambda_1\leq -|\lambda_n| $, then $ q(\lambda^-)=(2n-2, 1^2)$ or $(2n-3, 1^3)$, one has $F_d(\lambda^-)=n^2-3n+2$. Then (1) follows.

	Next assume that $ \lambda_1-\lambda_2\in \frac12+\mathbb{Z} $. Theorem \ref{thm:algo} gives  $ \gkd L(\lambda)=n^2-n-F_d(x^-)  -F_d(y^-) $. With $q(x^-)=(2)$ or $(1^2)$, we always have $F_d(x^-)=0$. By Greene's Theorem, $q(y^-)=(2n-2)$ or $(2n-3,1)$. In either case $F_d(y^-)=(n-1)(n-2)$.

	Finally assume that $ \lambda_1-\lambda_2\notin \frac12\mathbb{Z} $. Then $ \gkd L(\lambda)=n^2-n-F_a(x)  -F_d(y^-)$. The argument is similar.
\end{proof}

\noindent\textbf{Proof of Theorem \ref{thm:associated}.} By Proposition \ref{C: dimYk} and Table \ref{tab1}, we get the following facts.
	
	For $ G=Sp(n,\mathbb{R}) $,  
	$ 		\dim \overline{\mathcal{O}}_k=\frac12k(2n-k+1) \text{ with } 0\leq k\leq n. $
	
	For $ G=SO^*(2n) $, 
	$\dim \overline{\mathcal{O}}_k=k(2n-2k-1)\text{ with } 0\leq k\leq \left\lfloor\frac{n}{2}\right\rfloor .$
	
	For $ G=SO(2,2n-1) $, $
	\dim \overline{\mathcal{O}}_k=0,2n-2,2n-1 \text{ with } k=0,1,2.$
	
	For $ G=SO(2,2n-2) $, $
	\dim \overline{\mathcal{O}}_k=0,2n-3,2n-2 \text{ with } k=0,1,2.$
	
	Note that $\dim \overline{\mathcal{O}}_k $ is monotone in $ k $.	
	Now the  theorem follows immediately  from Lemmas \ref{prop:1}-\ref{prop:4} and the fact that $ \gkd L(\lambda)=\dim \overline{\mathcal{O}}_{k(\lambda)}$.
	
%
%%%%%%%%%%%%%%%%%%%%%%%%%%%%%%%%%%%%%%%%%%%%%%%%%%%%%%%%%%%%%%%%%%%%
%

\section{Associated varieties for Exceptional groups} \label{sec:exp}

%
%%%%%%%%%%%%%%%%%%%%%%%%%%%%%%%%%%%%%%%%%%%%%%%%%%%%%%%%%%%%%%%%%%%%
%

In this section, we give the associated varieties of highest weight HC modules for the two exceptional groups. Now $\Phi$ is of type $E_6$ or $E_7$. As usual $\Phi$ can be realized as a subset of $\mathbb{R}^8$. The simple roots are 
\begin{align*}
	\alpha_1&=\frac{1}{2}(\ep_1-\ep_2-\ep_3-\ep_4-\ep_5-\ep_6-\ep_7+\ep_8) ,\\
	\alpha_2&=\ep_1+\ep_2,\\
	\alpha_k&=\ep_{k-1}-\ep_{k-2} \text{ with }3\leq k\leq n,
\end{align*}
where $n=6, 7$. 
Here we adopt notation in \cite{EHW}. 
In particular, if $\Phi=E_6$, then $\Delta_c=\Delta\backslash\{\alpha_1\}$; if $\Phi=E_7$, then $\Delta_c=\Delta\backslash\{\alpha_7\}$. 

\begin{figure}[htpb]
	\begin{tikzpicture}[scale=1.5,baseline=0]
		\filldraw (0,0.05) node[above=1pt]{$ 1 $} circle [radius=0.05];
		\draw (0.05,0.05)--++(0.65,0); 
		\draw (0.75,0.05) node[above=1pt]{$ 3 $} circle[radius=0.05];  
		\draw (0.8,0.05)--++(0.65,0);
		\draw  (1.45,0) ++(0.05,0.05) node[above=1pt]{$ 4$} circle[radius=0.05];
		\draw (1.55,0.05)--++(0.65,0);
		\draw  (2.2,0) ++(0.05,0.05) node[above=1pt]{$ 5$} circle[radius=0.05];
		\draw (2.3,0.05)--++(0.65,0);
		\draw  (2.95,0) ++(0.05,0.05) node[above=1pt]{$ 6$} circle[radius=0.05];
		\draw (1.5,0)--++(0,-0.5);
		\draw  (1.45,-0.6) ++(0.05,0.05) node[right=1pt]{$ 2$} circle[radius=0.05];
	\end{tikzpicture}
	\quad
	\begin{tikzpicture}[scale=1.5,baseline=0]
		\draw (0,0.05) node[above=1pt]{$ 1 $} circle [radius=0.05];
		\draw (0.05,0.05)--++(0.65,0); 
		\draw (0.75,0.05) node[above=1pt]{$ 3 $} circle[radius=0.05];  
		\draw (0.8,0.05)--++(0.65,0);
		\draw  (1.45,0) ++(0.05,0.05) node[above=1pt]{$ 4$} circle[radius=0.05];
		\draw (1.55,0.05)--++(0.65,0);
		\draw  (2.2,0) ++(0.05,0.05) node[above=1pt]{$ 5$} circle[radius=0.05];
		\draw (2.3,0.05)--++(0.65,0);
		\draw  (2.95,0) ++(0.05,0.05) node[above=1pt]{$ 6$} circle[radius=0.05];
		\draw (3.05,0.05)--++(0.65,0);
		\filldraw  (3.7,0) ++(0.05,0.05) node[above=1pt]{$ 7$} circle[radius=0.05];
		\draw (1.5,0)--++(0,-0.5);
		\draw  (1.45,-0.6) ++(0.05,0.05) node[right=1pt]{$ 2$} circle[radius=0.05];
	\end{tikzpicture}
	\caption{Diagrams of $ E_6 $ and $ E_7 $ }
	\label{d67}
\end{figure}
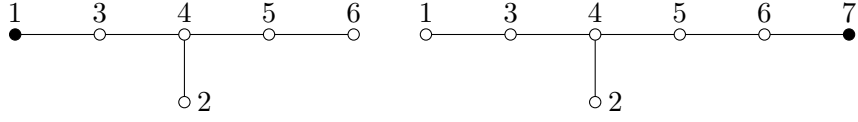

\subsection{The associated varieties} %First we state our criteria for exceptional groups.

Recall from \eqref{eq:phi-lam} that, for an integral weight $ \lam\in\hs $, we have $ \Psi_\lam^+=\{\alpha\in\Phi^+\mid\langle\lambda, \alpha^\vee\rangle>0\} $.
\begin{Thm}\label{67}
	Let $L(\lambda)$ be a HC module of $G$ of type $E_6/E_7$. If $\lambda\in\hs$ is not integral, then $V(L(\lambda))=\overline{\mathcal{O}}_{k(\lambda)}$ with $k(\lambda)=2$ $($for $E_6$$)$ or $3$ $($for $E_7$$)$. In the case when $\lambda$ is integral, $ k(\lam) $ is given as follows.
	\begin{itemize}
		\item [(1)] If $G$ is of type $E_6$, then
		\[
		k(\lambda)=\begin{cases}
			0& \text{ if } \Psi_{\lam}^+\cap S_2\neq\emptyset,\\
			1& \text{ if } \Psi_{\lam}^+\cap S_1\neq\emptyset \text{ and } \Psi_{\lam}^+\cap S_2=\emptyset ,\\
			2& \text{ if } \Psi_{\lam}^+\cap S_1=\emptyset,
		\end{cases}
		\]
		where $S_1=\{\frac{1}{2}(\pm(\ep_1+\ep_2+\ep_3-\ep_4)-\ep_5-\ep_6-\ep_7+\ep_8)\}$ and $S_2=\{\alpha_1\}$.
	
	\item [(2)] If $G$ is of type $E_7$, then
		\[
	k(\lambda)=\begin{cases}
		0& \text{ if } \Psi_{\lam}^+\cap S_3\neq\emptyset,\\
		1& \text{ if } \Psi_{\lam}^+\cap S_2\neq\emptyset \text{ and } \Psi_{\lam}^+\cap S_3=\emptyset ,\\
		2& \text{ if } \Psi_{\lam}^+\cap S_1\neq\emptyset \text{ and } \Psi_{\lam}^+\cap S_2=\emptyset ,\\
		3& \text{ if } \Psi_{\lam}^+\cap S_1=\emptyset,
	\end{cases}
	\]
	where $S_1=\{\frac{1}{2}(\pm(\ep_1-\ep_2-\ep_3+\ep_4)-\ep_5+\ep_6-\ep_7+\ep_8), \ep_5+\ep_6\}$, $S_2=\{\pm\ep_1+\ep_6\}$ and $S_3=\{-\ep_5+\ep_6\}$.
	\end{itemize}

\end{Thm}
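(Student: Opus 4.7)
The overall strategy mirrors the classical cases: by the lemma preceding Proposition \ref{C: dimYk}, $V(L(\lambda)) = \overline{\mathcal{O}}_{k(\lambda)}$ for some $k(\lambda)$, and Proposition \ref{C: dimYk} combined with Table \ref{tab1} shows that $\dim \overline{\mathcal{O}}_k$ is strictly increasing in $k$ (the values being $0,11,16$ for $E_6$ and $0,17,26,27$ for $E_7$). Hence $k(\lambda)$ is determined by $\gkd L(\lambda)$, and it suffices to compute this GK-dimension in every case of the theorem, matching it against $|\Phi^+|-\dim\overline{\mathcal{O}}_{k(\lambda)}$ (i.e., $36,25,20$ for $E_6$ and $63,46,37,36$ for $E_7$).

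For the non-integral case, the plan is to invoke the results of \cite{CIS} (cited for this purpose in the introduction): since $\lambda$ is $\Phi_c^+$-dominant and non-integral, the integral root subsystem $\Phi_{[\lambda]}$ is a proper subsystem whose classification in \cite{CIS} forces $\gkd L(\lambda)$ to attain the maximal value $\dim\overline{\mathcal{O}}_r$ ($=16$ for $E_6$, $=27$ for $E_7$), yielding $k(\lambda)=r$.

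For the integral case, write $\lambda = w\mu$ with $\mu$ an integral anti-dominant weight and $w \in W^J$, where $J = \{\alpha \in \Delta \mid \langle\mu,\alpha^\vee\rangle = 0\}$. Proposition \ref{pr:main1} gives $\gkd L(\lambda) = |\Phi^+| - \aff(w)$, so the task reduces to reading off $\aff(w)$ from the combinatorial conditions on $\Psi_\lambda^+$. Lemma \ref{uclem2} supplies the bridge: $\Psi_\lambda^+ = \Phi_{wC_\circ}^+ = \{\alpha \in \Phi^+ \mid w^{-1}\alpha < 0\}$, so the hypothesis $\Psi_\lambda^+ \cap S_i \neq \emptyset$ is exactly the statement that $w^{-1}$ inverts at least one root of $S_i$. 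The sets $S_1, S_2, S_3$ consist of carefully chosen noncompact roots with the property that these inversion conditions single out the two-sided cells of $W(E_6)$ or $W(E_7)$ associated (via the Springer correspondence) with the orbits $\mathcal{O}_0, \mathcal{O}_1, \mathcal{O}_2$ (and $\mathcal{O}_3$ for $E_7$). A case-by-case consultation of Lusztig's tables of $\aff$-values on these cells will then give the required match.

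The main obstacle is this final verification: showing that the very sparse sets $S_i$ (two or three roots each) do cut out cell membership as stated. Concretely, one must check, for each $w \in W^J$ as $J$ ranges over subsets of $\Delta_c$, that $w$ lies in the cell corresponding to $\mathcal{O}_i$ if and only if $w^{-1}$ inverts some root in $S_{r-i}$ but no root in $S_{r-i+1}$. Because the listed $S_i$ are so small, this characterization is tight and must be verified by a detailed inspection of cell representatives in $W(E_6)$ and $W(E_7)$, using the known classification of two-sided cells and their $\aff$-values together with the constraint that $w\gamma>0$ for every $\gamma\in J\subset\Delta_c$. This combinatorial case-check is where essentially all the work lies.
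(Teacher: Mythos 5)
Your reduction is sound and matches the paper's: since $\dim\overline{\mathcal{O}}_k$ is strictly increasing in $k$ (Proposition \ref{C: dimYk}), $k(\lambda)$ is determined by $\gkd L(\lambda)$, and Proposition \ref{pr:main1} together with Lemma \ref{uclem2} converts the problem into reading off $\aff(w)$ from $\Psi_\lambda^+=\Phi_{wC_\circ}^+$. But this is the easy half. The entire content of the theorem is the claim that intersecting $\Psi_\lambda^+$ with the tiny sets $S_i$ detects the correct value of $\aff(w)$, and you explicitly leave this as an unexecuted ``detailed inspection of cell representatives'' using Lusztig's tables of $\aff$-values on two-sided cells of $W(E_6)$ and $W(E_7)$. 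That is precisely the step the paper proves as Lemma \ref{lemlast}, and without it you have no proof. Moreover the route you sketch is both harder and not what the structure of the problem suggests: the paper's own remark points out that computing $\aff$-functions on exceptional Weyl groups directly is infeasible even by computer, and your plan to quantify over ``$w\in W^J$ as $J$ ranges over subsets of $\Delta_c$'' misstates the relevant constraint — $J$ is determined by $\mu$ and need not lie in $\Delta_c$; what the HC condition actually gives is $\Phi_c^+\subset\Psi_\lambda^+=\Phi_w^+$, hence $w=w_cz$ with $z\in W^c$, which is what makes the problem finite and small (27 resp.\ 56 cosets).

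The missing ingredients, in the paper's argument, are: (i) the result of \cite{CIS} (Lem.~7.1 and Figs.~7.1, 7.2 there) that $W^c$ is partitioned into Bruhat-order downset differences $\msc{C}_k$ on which $\gkd L(-w_cz\rho)$ is constant with the stated values — this replaces any appeal to cells or the Springer correspondence; (ii) Lemma \ref{hermp}, resting on Deodhar's chain property for $W^c$ in the Hermitian case, which gives $\Phi^+_{w_cw}=\Phi^+_{w_cy}\sqcup w_cy(\Phi^+_z)$ whenever $y\le w$ in $W^c$, so inversion sets grow monotonically along the Bruhat order; and (iii) an explicit computation identifying the minimal elements of $W^c\setminus\msc{C}_{k-1}$ and showing that the single new inversion created at each such minimal element is exactly a root of $S_k$, together with the check that the maximal element of $\msc{C}_{k-1}$ has no inversion in $S_k$. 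These three facts combine to prove that ``$\Psi_\lambda^+\cap S_k\neq\emptyset$, $\Psi_\lambda^+\cap S_{k+1}=\emptyset$'' is equivalent to $z\in\msc{C}_k$, which is what you assumed but did not establish. Finally, for the non-integral case you gesture at a ``classification in \cite{CIS}''; the actual input is \cite[Lem.~3.17]{EHW} (Lemma \ref{exlem1}), which gives $\gkd L(\lambda)=|\Phi^+\setminus\Phi_c^+|$ directly — your conclusion is right but the justification as written is not a proof.
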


\begin{ex}
	Let $ \Phi=E_6 $ and $ \lam=\rho-2\alpha_1=(-1, 2, 3, 4, 5, -3, -3, 3)$. Since $ \lam $ is  $ \Phi_c^+ $-dominant, $L(\lambda)$ is a HC module. It can be easily verified that  $S_1\subset\Psi_{\lam}^+$ and $\alpha_1\not\in\Psi_\lambda^+$.	Therefore $V(L(\lambda))=\overline{\mathcal{O}}_{1}$ by Theorem \ref{67}(1).
\end{ex}

\begin{Rem}
	When $L(\lambda)$ is a HC module of exceptional types, we know that its associated variety are determined by the corresponding $\aff$-functions, in view of Propositions \ref{C: dimYk} and \ref{pr:main1}. The computation of $\aff$-functions for exceptional Weyl groups is by no means an easy problem even using computers. However, as shown in the above example, with Theorem \ref{67}, the associated variety can be easily obtained by hand. 
\end{Rem}

The rest of this section is devoted to the proof of Theorem \ref{67}.
%Then $ w^c:=w_cw_\circ $ is the longest element in $ W^c $, and $ \Phi^+\backslash\Phi_c^+=\Phi_{w^c}^- $. 

\subsection{Some facts} 
Let $W_c$ be the Weyl group of the compact root system $\Phi_c$. Denote by $W^c$ the set of minimal length coset representatives of $W_c\backslash W$. Then $W=W_cW^c$. Let $ w\in W$. For simplicity, we write (see \S2.5)
\[
\Phi_w^+:=\Phi_{wC_\circ}^+=\{\alpha\in \Phi^+\mid w^{-1}\alpha<0 \}.
\]
Let $ w=s_{\gamma_1}s_{\gamma_2} \cdots s_{\gamma_k}$ be a reduced expression of $ w $ with $\gamma_1, \cdots, \gamma_k\in\Delta$. For $ 1\leq r\leq k $, define positive roots
\[
\beta_r=s_{\gamma_1}s_{\gamma_2} \cdots s_{\gamma_{r-1}} \gamma_r.
\]
Then $ \beta_r $'s are distinct and (see for example \cite[\S1.7]{Hum90}) 
\begin{equation}\label{gfeq1}
\Phi_w^+=\{\beta_{r}\mid   1\leq r\leq k \}.
\end{equation}
In particular, this set is independent of the choice of the reduced expressions of $ w $. Recall the Bruhat ordering ``$\leq$'' on $W$ (e.g., \cite[\S5.9]{Hum90}).

\begin{Lem}\label{hermp}
	Let $G$ be a Lie group of Hermitian type. Suppose that $y\leq w$ for $y, w\in W^c$. Write $z=y^{-1}w$. Then $ \ell(w)=\ell(y)+\ell(z) $, and  \begin{equation}\label{eq:Phi-w}
	\Phi_{w_cw}^+=\Phi_{w_cy}^+\sqcup  w_cy(\Phi_z^+) .
	\end{equation}
\end{Lem}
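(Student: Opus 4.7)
The plan is to reduce both conclusions to a single ``containment of inversion sets'' input, which in the Hermitian (cominuscule) setting takes the form: for $y, w \in W^c$ with $y \leq w$, one has $\Phi_y^+ \subset \Phi_w^+$. This is genuinely a Hermitian-type phenomenon and fails in general for minimal coset representatives; the cleanest justification is via the heap model for cominuscule quotients, where $W^c$ is a distributive lattice, elements correspond to order ideals in a fixed poset of non-compact positive roots, and Bruhat order becomes ideal containment---under this identification $\Phi_w^+$ is exactly the ideal indexing $w$. I would take this as cited input rather than reproving it.

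Granting this, the length identity $\ell(w) = \ell(y) + \ell(z)$ follows by a short contradiction argument. Since $\ell(yz) = \ell(y) + \ell(z)$ is equivalent to $\Phi_{y^{-1}}^+ \cap \Phi_z^+ = \emptyset$, suppose $\gamma$ lies in the intersection. Then $\gamma > 0$, $y\gamma < 0$, $z^{-1}\gamma < 0$. Set $\delta = -y\gamma > 0$; since $y^{-1}\delta = -\gamma < 0$, we have $\delta \in \Phi_y^+ \subset \Phi_w^+$, forcing $w^{-1}\delta < 0$. But using $w^{-1}y = z^{-1}$ one computes $w^{-1}\delta = -w^{-1}y\gamma = -z^{-1}\gamma > 0$, a contradiction. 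This is the only place where the Hermitian hypothesis intervenes, and it is also the main obstacle of the proof.

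For the root-set decomposition, I would combine reduced expressions. Standard parabolic theory gives $\ell(w_c y) = \ell(w_c) + \ell(y)$ and $\ell(w_c w) = \ell(w_c) + \ell(w)$, and the length identity just shown yields $\ell(w_c w) = \ell(w_c y) + \ell(z)$. So concatenating a reduced expression of $w_c y$ with one of $z$ produces a reduced expression of $w_c w$. Applying \eqref{gfeq1} to this combined expression splits $\Phi_{w_c w}^+$ into the initial $\ell(w_c y)$ roots---which form $\Phi_{w_c y}^+$ by \eqref{gfeq1} applied to $w_c y$---together with the remaining $\ell(z)$ roots, which take the form $w_c y \cdot \gamma_r$ with $\gamma_r$ ranging over the roots produced from the reduced expression of $z$, namely $w_c y(\Phi_z^+)$ by \eqref{gfeq1} applied to $z$. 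Disjointness is automatic, since any root in $w_c y(\Phi_z^+)$ is mapped to a positive root by $(w_c y)^{-1}$ and therefore cannot lie in $\Phi_{w_c y}^+$.
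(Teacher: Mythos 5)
Your proof is correct, and the second half (concatenating a reduced expression of $w_cy$ with one of $z$, reading off $\Phi_{w_cw}^+$ via \eqref{gfeq1}, and getting disjointness because $(w_cy)^{-1}$ is positive on $w_cy(\Phi_z^+)$) is exactly what the paper means by ``an easy consequence of the first [statement] and \eqref{gfeq1}.'' Where you genuinely diverge is in the proof of $\ell(w)=\ell(y)+\ell(z)$. The paper gets it from two cited facts: Deodhar's result that $y\leq w$ inside $W^c$ refines to a saturated chain staying in $W^c$, and the Hermitian-specific fact (from \cite{BC86}) that each covering step in $W^c$ is right multiplication by a \emph{simple} reflection; this exhibits $z$ as a product of $\ell(w)-\ell(y)$ simple reflections, which forces $\ell(z)=\ell(w)-\ell(y)$. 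You instead take as input the cominuscule order-ideal description of $W^c$, under which Bruhat order becomes containment of inversion sets, and convert $\Phi_y^+\subseteq\Phi_w^+$ into the no-cancellation criterion $\Phi_{y^{-1}}^+\cap\Phi_z^+=\emptyset$ by a direct root computation. I checked the details: the identity $\ell(yz)=\ell(y)+\ell(z)-2\,|\Phi_{y^{-1}}^+\cap\Phi_z^+|$ is the standard one, and your map $\gamma\mapsto\delta=-y\gamma$ correctly produces an element of $\Phi_y^+\setminus\Phi_w^+$ from any $\gamma$ in the intersection, since $w^{-1}\delta=-z^{-1}\gamma>0$. Both routes lean on an external Hermitian-specific fact of comparable depth, and the two facts are close cousins (the distributive-lattice/weak-order-equals-Bruhat-order property of cominuscule quotients is essentially equivalent to the cover-by-simple-reflection property the paper cites), so neither argument is more elementary; yours has the mild advantage of avoiding the chain refinement, the paper's of quoting a statement already phrased for Hermitian $W^c$.
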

\begin{proof}
	If $ \ell(w)=\ell(y)+1$, one can write $w=ys_\beta$ for some $\beta\in\Phi^+$. It is well-known that (see for example \cite{BC86}) in all Hermitian cases we may always choose $\beta$ simple. Thus $z=s_\beta$ and $ \ell(w)=\ell(y)+\ell(z) $. If $ \ell(w)>\ell(y)+1$, Deodhar \cite{Deo77} showed that there is a sequence $y=y_0<y_1<\cdots<y_k=w$ in $W^c$ such that $\ell(y_i)=\ell(y_{i-1})+1$ for $1\leq i\leq k$. The previous argument implies that $y_{i-1}^{-1}y_i$ are simple reflections. This gives the first result. The second statement is an easy consequence of the first one and \eqref{gfeq1}. 
\end{proof}

The following result is due to \cite[Lem. 3.17]{EHW}.

\begin{Lem}\label{exlem1}
	Suppose that $G$ is of type $E_6$ or $E_7$. Let $L(\lambda)$ be a highest weight HC module of $G$. If $\lambda$ is not integral, then $\gkd L(\lambda)=|\Phi^+\backslash\Phi_c^+|$.
\end{Lem}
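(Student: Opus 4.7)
The plan is to combine Proposition \ref{pr:main1} with the containment $V(L(\lambda))\subseteq\mathfrak{p}^+$ that is built into the Harish-Chandra setting. Writing $\lambda=w\mu$ with $\mu\in\mathfrak{h}^*$ anti-dominant and $w\in W_{[\mu]}^J$, Proposition \ref{pr:main1} gives
\[
\gkd L(\lambda)=|\Phi^+|-\aff_{[\lambda]}(w),
\]
so the claim reduces to showing $\aff_{[\lambda]}(w)=|\Phi_c^+|$. One direction is immediate: since $M_0$ may be taken $(\mathfrak{k}\oplus\mathfrak{p}^+)$-invariant, \eqref{embed} gives $V(L(\lambda))\subseteq\mathfrak{p}^+$, hence $\gkd L(\lambda)\le\dim\mathfrak{p}^+=|\Phi^+|-|\Phi_c^+|$, which translates to $\aff_{[\lambda]}(w)\ge|\Phi_c^+|$.

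For the reverse inequality, I would exploit that $\lambda$ is $\Phi_c^+$-regular integral (the HC condition), so $\Phi_c\subseteq\Phi_{[\lambda]}$ and $\mu=w^{-1}\lambda$ is $\Phi_c^+$-anti-regular. Since each $\alpha\in\Delta_c$ is simple in $\Phi^+$ and remains indecomposable inside the induced positive system $\Phi_{[\lambda]}^+$, one has $\Delta_c\subseteq\Delta_{[\lambda]}$, so $W_c$ is a standard Coxeter-parabolic subgroup of $W_{[\lambda]}$. Let $w_c$ denote the longest element of $W_c$; then $w_c\mu$ is the unique $\Phi_c^+$-dominant weight in the orbit $W_c\mu$. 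Granted that $w=w_c$, Lemma \ref{lem:Hecke}(2) yields $\aff_{[\lambda]}(w_c)=\ell(w_c)=|\Phi_c^+|$, which matches the lower bound and closes the argument.

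The hard part is the identification $w=w_c$ when $\Phi_c\subsetneq\Phi_{[\lambda]}\subsetneq\Phi$. Non-integrality forces $\Phi_{[\lambda]}\ne\Phi$, but the extra non-compact integral roots could a priori admit a longer shortest coset representative $w\in W_{[\mu]}^J$. For the exceptional Hermitian groups $E_6$ and $E_7$, I would resolve this by enumerating the possible intermediate subsystems $\Phi_{[\lambda]}$ sandwiched between $\Phi_c$ and $\Phi$ via the subroot-system classification; in each case one checks that the non-compact part of $\Phi_{[\lambda]}$ either splits off orthogonally (so that Lemma \ref{lem:Hecke}(3) confines the $\aff$-value to the $W_c$-contribution $|\Phi_c^+|$) or else its associated stabilizer $W_J$ absorbs the excess generators, forcing $w$ to coincide with $w_c$ as the shortest representative.
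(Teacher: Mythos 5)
Note first that the paper offers no internal proof of this lemma at all: it is imported verbatim from [EHW, Lem.~3.17]. So the comparison is really between your sketch and what such a proof must contain. Your first half is fine: reducing to $\aff_{[\lambda]}(w)\geq|\Phi_c^+|$ via Proposition \ref{pr:main1} and the containment $V(L(\lambda))\subseteq\mathfrak{p}^+$ from \eqref{embed} is correct and is the easy direction.

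The gap is in the reverse inequality, and it is not a technicality. Your argument rests on the identification $w=w_c$, which is false whenever $\Phi_c\subsetneq\Phi_{[\lambda]}$. The $\Phi_c^+$-dominance of $\lambda=w\mu$ with $\mu$ anti-dominant only forces $w^{-1}(\Phi_c^+)\subseteq-\Phi_{[\lambda]}^+$, i.e.\ $w=w_cz$ with $z$ a minimal length representative of a coset in $W_c\backslash W_{[\lambda]}$; each admissible $z$ produces a distinct highest weight HC module with the same non-integral infinitesimal character, and the lemma must cover all of them. Lemma \ref{lem:Hecke}(2) evaluates $\aff_{[\lambda]}$ only at $w_c$ itself and gives no upper bound on $\aff_{[\lambda]}(w_cz)$ for $z\neq e$; what is actually required is the equality $\aff_{[\lambda]}(w_cz)=|\Phi_c^+|$ for every such $z$, i.e.\ that all these elements share the $\aff$-value of $w_c$ inside $W_{[\lambda]}$. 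This genuinely depends on which subsystems $\Phi_c\subseteq\Phi_{[\lambda]}\subsetneq\Phi$ can occur for $E_6$ and $E_7$ and cannot follow from a soft argument: the analogous statement is \emph{false} for $Sp(n,\mathbb{R})$, where a non-integral $\lambda$ with $\lambda_1\in\frac12+\mathbb{Z}$ can give $\gkd L(\lambda)<\dim\mathfrak{p}^+$ by Lemma \ref{prop:1}(2) (e.g.\ $n=2$, $\lambda=(\frac32,\frac12)$ gives $2<3$). Your final paragraph acknowledges the difficulty but neither carries out the enumeration nor reaches a tenable conclusion: ``forcing $w$ to coincide with $w_c$'' is not what happens, and in the orthogonal-splitting branch Lemma \ref{lem:Hecke}(3) would still leave a nonzero contribution $\aff(z)$ unless $z=e$. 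To repair the proof you would have to either list the possible $\Phi_{[\lambda]}$ for $E_6,E_7$ and verify the cell/$\aff$-equality in each case, or fall back on the citation of [EHW, Lem.~3.17] as the paper does.
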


With this lemma,  it suffices to consider the case when $ \lam $ is integral.

\subsection{A decomposition of $W^c$}

Denote by $w_\circ$ (resp. $w_c$) the longest element in $W$ (resp. $W_c$). All the $\Phi_c^+$-dominant weights with infinitesimal character $-\rho$ can be written as $w_cw(-\rho)$ with $w\in W^c$. According to  \cite[Lem. 7.1]{CIS}, the set $ W^c $ can be partitioned into some subsets such that $ \gkd L(-w_cw\rho)$ is constant when $ w$ varies in each subset. 
 
Indeed, we label elements of $ W^c $ in the same way as \cite[Figs. 7.1, 7.2]{CIS}, where  
$ W^c=\{w_0,w_1,\cdots, w_{26}\} $ for type $ E_6 $ and $ W^c=\{w_0,w_1,\cdots, w_{55}\} $
for type $ E_7 $.
Reduced expressions of these elements can be easily read from  \cite[Figs. 7.1, 7.2]{CIS}, keeping in mind that our labels on simple roots (see Figure \ref{d67}) are different from those
in \cite[page 81]{CIS}. For example, in type $ E_6 $, a reduced expression of $ w_9 $  is given by $ s_{\alpha_1}s_{\alpha_3}s_{\alpha_4}s_{\alpha_5}s_{\alpha_2}s_{\alpha_6} $.

%One can also see that the longest element $ w^c $ of $ W^c $ is equal to $ w_{26} $  in type $ E_6 $ (or $ w_{55} $ in type $ E_7 $).

Denote by $ W^c_{\leq w}  $ the subset $ \{ y\in W^c\mid y\leq w \}$ for any $ w\in W^c$. In \cite[Fig. 7.1]{CIS}, the set $ W^c $  for type $ E_6 $ is decomposed into three subsets $  \msc{C}_0=W^c_{\leq w_{19}}$, $  \msc{C}_1=W^c_{\leq w_{25}}\backslash W^c_{\leq w_{19}} $ and $  \msc{C}_2=W^c_{\leq w_{26}}\backslash W^c_{\leq w_{25}}=\{w_{26}\}$. Similarly, in \cite[Fig. 7.2]{CIS}, the set $ W^c $  for type $ E_7 $ is decomposed into 4 disjoint subsets $ \msc{C}_0=W^c_{\leq w_{23}}$, $\msc{C}_1=W^c_{\leq w_{48}}\backslash W^c_{\leq w_{23}}$, $\msc{C}_2=W^c_{\leq w_{54}}\backslash   W^c_{\leq w_{48}} $ and $\msc{C}_3=W^c_{\leq w_{55}}\backslash   W^c_{\leq w_{54}}=\{w_{55}\}$.

\begin{Lem}\label{lemlast}
	Recall the sets $S_k$ defined in Theorem \ref{67}.
	\begin{itemize}
		\item [(1)] Let $\Phi=E_6$. Fix $0\leq k\leq 2$. The following conditions are equivalent:
		\begin{itemize}
			\item [(1a)] $z\in\msc{C}_k$;
			\item [(1b)] $\gkd L(-w_cz\rho)=(2-k)(8+3k)=|\Phi^+|-\aff(w_cz)$;
			\item [(1c)] $\Phi_{w_cz}^+\cap S_k\neq\emptyset$ when $k\geq1$ and $\Phi_{w_cz}^+\cap S_{k+1}=\emptyset$ when $k\leq1$.
		\end{itemize}
		\item [(2)] Let $\Phi=E_7$. Fix $0\leq k\leq 3$. The following conditions are equivalent:
		\begin{itemize}
			\item [(2a)] $z\in\msc{C}_k$;
			\item [(2b)] $\gkd L(-w_cz\rho)=(3-k)(9+4k)=|\Phi^+|-\aff(w_cz)$;
			\item [(2c)] $\Phi_{w_cz}^+\cap S_k\neq\emptyset$ when $k\geq1$ and $\Phi_{w_cz}^+\cap S_{k+1}=\emptyset$ when $k\leq2$.
		\end{itemize}
	\end{itemize}
\end{Lem}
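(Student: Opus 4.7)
The plan is to prove the chain (a) $\Leftrightarrow$ (b) $\Leftrightarrow$ (c) by leveraging the stratification of $W^c$ from \cite[\S7]{CIS} and then extracting the root-theoretic characterization in (c).

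For (a) $\Leftrightarrow$ (b), I would start from \cite[Lem.\ 7.1]{CIS}, which asserts that $z \mapsto \gkd L(-w_cz\rho)$ is constant on each stratum $\msc{C}_k$; together with Proposition \ref{pr:main1} this already supplies the rightmost equality $\gkd L(-w_cz\rho) = |\Phi^+| - \aff(w_cz)$. It remains to identify the common value. The plan is to evaluate the GK dimension at one representative per stratum (e.g.\ $z = e$ at the bottom, $w_c z = w_c$, and $z = w_{26}$ at the top, $w_c z = w_\circ$, giving $|\Phi^+\setminus\Phi_c^+|$ and $0$ respectively) and to compare with the formula $\dim \overline{\mathcal{O}}_j = j(h^\vee-1) - j(j-1)c$ from Proposition \ref{C: dimYk}. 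Plugging in the data of Table \ref{tab1} yields $\dim\overline{\mathcal{O}}_j = j(14-3j)$ for $E_6$ and $j(21-4j)$ for $E_7$, which precisely match the formulas in (b) under the identification $j = r-k$. Since these values are distinct for distinct $k$, the GK dimension determines the stratum uniquely, completing (a) $\Leftrightarrow$ (b).

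For (a) $\Leftrightarrow$ (c), the idea is to use Lemma \ref{hermp}, which guarantees the monotonicity $\Phi_{w_cy}^+ \subseteq \Phi_{w_cw}^+$ whenever $y \leq w$ in $W^c$. With this in hand, I would verify two statements: (I) for $w$ equal to the maximum element of $\bigsqcup_{j<k}\msc{C}_j$ (namely $w_{19}, w_{25}$ in type $E_6$; $w_{23}, w_{48}, w_{54}$ in type $E_7$) one has $\Phi_{w_cw}^+ \cap S_k = \emptyset$; and (II) for every minimal element $z_0$ of $\msc{C}_k$ in the Bruhat order, the intersection $\Phi_{w_cz_0}^+ \cap S_k$ is nonempty. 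Propagating via Lemma \ref{hermp}, (I) gives $\Phi_{w_cz}^+\cap S_k = \emptyset$ for all $z$ below the threshold, while (II) gives $\Phi_{w_cz}^+\cap S_k \neq\emptyset$ for $z$ above it, completing the equivalence. To compute $\Phi_{w_cz}^+$ explicitly I would concatenate reduced expressions of $w_c$ and $z$ (which is valid because $\ell(w_cz) = \ell(w_c) + \ell(z)$ for $z\in W^c$) and apply \eqref{gfeq1}; reduced expressions for every element of $W^c$ can be read off from \cite[Figs.\ 7.1, 7.2]{CIS}.

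The main obstacle will be the combinatorial bookkeeping in step (II): one has to track which non-compact positive roots first enter $\Phi_{w_cz}^+$ as $z$ crosses each boundary of the Bruhat stratification and confirm that the entering roots are exactly those listed in $S_k$. Two features of the setup will make the verification manageable. First, the top strata are trivial: $\msc{C}_r = \{w_{26}\}$ in $E_6$ and $\{w_{55}\}$ in $E_7$ correspond to $w_cz = w_\circ$, so $\Phi_{w_\circ}^+ = \Phi^+$ contains every positive root, in particular the simple non-compact root $\alpha_1$ (resp.\ $\alpha_7$) which is precisely $S_2$ (resp.\ $S_3$). Second, \cite[Figs.\ 7.1, 7.2]{CIS} show that each intermediate stratum has only a small number of minimal elements, reducing (II) to a finite check; the sets $S_k$ have been chosen precisely so that each such minimal element contributes at least one of the listed roots, and this will have to be verified root-by-root.
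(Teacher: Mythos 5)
Your proposal is correct and follows essentially the same route as the paper: the equivalence (a)$\Leftrightarrow$(b) comes from \cite[Lem.~7.1]{CIS} together with Proposition \ref{pr:main1}, and (a)$\Leftrightarrow$(c) is obtained by propagating, via Lemma \ref{hermp} and \eqref{eq:Phi-w}, the two finite checks that the maximal element of the lower strata misses $S_k$ while each minimal element above the boundary picks up a root of $S_k$ (the paper computes these entering roots $\beta$ explicitly from reduced expressions and verifies they are exactly $S_k$). The only cosmetic difference is that you re-derive the numerical values in (b) by matching against $\dim\overline{\mathcal{O}}_j$ from Proposition \ref{C: dimYk}, whereas the paper takes them directly from \cite{CIS}.
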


\begin{proof}
	Here we only prove the case $k=0$ for $\Phi=E_7$, while the proofs for the other cases are similar. In this case, $W^c\backslash\msc{C}_0$ has three minimal elements $ w_{16} ,w_{24}, w_{26} $ under the Bruhat order. Moreover, \cite[Figs. 7.2]{CIS} shows that
	\begin{equation}\label{eq:w16}
	w_{16}=w_{13}s_{\alpha_7}\geq w_{13}, w_{24}=w_{21}s_{\alpha_2}\geq w_{21}\text{ and }w_{26}=w_{23}s_{\alpha_5}\geq w_{23},
\end{equation}
	with $w_{13}, w_{21}, w_{23}\in \msc{C}_0$. In view of \eqref{eq:Phi-w}, we denote by $\beta_{16}$, $\beta_{24}$ and  $\beta_{26}$ the unique roots in $\Phi_{w_cw_{16}} ^+\setminus \Phi_{w_cw_{13}} ^+$, $ \Phi_{w_cw_{24}} ^+\setminus \Phi_{w_cw_{21}} ^+$ and $ \Phi_{w_cw_{26}} ^+\setminus \Phi_{w_cw_{23}} ^+$, respectively. By \eqref{eq:Phi-w} and \eqref{eq:w16}, we can get
	\begin{equation*}\label{lemleq1}
	\beta_{16}=w_cw_{13}\alpha_7, \beta_{24}=w_cw_{21}\alpha_2\ \mbox{and}\ \beta_{26}=w_cw_{23}\alpha_5
	\end{equation*}
	A straightforward computation shows that $\{\beta_{16}, \beta_{24}, \beta_{26}\}=S_1$. It can be also directly verified that the set $\Phi_{w_cw_{23}}^+\cap S_1=\emptyset$.
	
	The equivalence of (2a) and (2b) is due to \cite[Lemma 7.1]{CIS} and Proposition \ref{pr:main1}. If (2a) holds, that is, $z\in\msc{C}_0$, then $z\leq w_{23}$ since $w_{23}$ is the unique maximal element in $\msc{C}_0$. Lemma \ref{hermp} yields $\Phi_{w_cz}^+\subset\Phi_{w_cw_{23}}^+$ and thus $\Phi_{w_cz} ^+\cap S_1=\emptyset$. This gives (2c). Conversely, suppose $\Phi_{w_cz} ^+\cap S_1=\emptyset$. Recall that $ w_{16} ,w_{24}, w_{26} $ are all the minimal elements in $W^c\backslash\msc{C}_0$. If $z\not\in \msc{C}_0$, then at least one of $\Phi_{w_cw_{16}} ^+$, $\Phi_{w_cw_{24}} ^+$ and $\Phi_{w_cw_{26}} ^+$ is contained in $\Phi_{w_cz} ^+$, in view of   \eqref{eq:Phi-w}. Thus $\Phi_{w_cz} ^+\cap S_1\neq\emptyset$, a contradiction.
\end{proof}

\noindent\textbf{Proof of Theorem \ref{67}.} If $\lambda$ is not integral, we can apply Proposition \ref{C: dimYk} and Lemma \ref{exlem1}. 

Now assume that $\lambda$ is integral, then there exists a unique $w\in W$ such that $\lambda\in\widehat{wC_\circ}\subset\overline{wC_\circ}$. Thus $\mu=w^{-1}\lambda\in\overline C_\circ$ is anti-dominant. Proposition \ref{pr:main1} and Lemma \ref{uclem2} imply $\gkd L(\lambda)=|\Phi^+|-\aff(w)$ and $\Psi_\lambda^+=\Phi_{wC_\circ}^+=\Phi_{w}^+$. Note that $L(\lambda)$ is a HC module if and only if $\lambda$ is $\Phi_c^+$-dominant, that is, $\Phi_c^+\subset\Psi_\lambda^+=\Phi_{w}^+$. This means $w=w_cz$ for some $z\in W^c$. The theorem then follows from Proposition \ref{C: dimYk} and Lemma \ref{lemlast}.

\bibliography{GK}

\begin{thebibliography}{{Gre}74}

\bibitem[AV21]{Vo19}
J.~Adams and D.~A. Vogan, Jr.
\newblock Associated varieties for real reductive groups.
\newblock {\em to appear in Pure and Applied Mathematics Quarterly}, 2021.

\bibitem[BB85]{BoB3}
W.~Borho and J.-L. Brylinski.
\newblock Differential operators on homogeneous spaces. {III}. {C}haracteristic
  varieties of {H}arish-{C}handra modules and of primitive ideals.
\newblock {\em Invent. Math.}, 80(1):1--68, 1985.

\bibitem[BC86]{BC86}
B.~D. Boe and D.~H. Collingwood.
\newblock A multiplicity one theorem for holomorphically induced
  representations.
\newblock {\em Math. Z.}, 192(2):265--282, 1986.

\bibitem[Ber71]{Be}
I.~N. Bern\v{s}te\u{\i}n.
\newblock Modules over a ring of differential operators. {A}n investigation of
  the fundamental solutions of equations with constant coefficients.
\newblock {\em Funkcional. Anal. i Prilo\v{z}en.}, 5(2):1--16, 1971.

\bibitem[BGIL10]{bonnafe-geck-iancu-lam2010}
C.~Bonnaf\'{e}, M.~Geck, L.~Iancu, and T.~Lam.
\newblock On domino insertion and {K}azhdan-{L}usztig cells in type {$B_n$}.
\newblock In {\em Representation theory of algebraic groups and quantum
  groups}, volume 284 of {\em Progr. Math.}, pages 33--54.
  Birkh\"{a}user/Springer, New York, 2010.

\bibitem[BH15]{BH}
Z.~Bai and M.~Hunziker.
\newblock The {G}elfand-{K}irillov dimension of a unitary highest weight
  module.
\newblock {\em Sci. China Math.}, 58:2489--2498, 2015.

\bibitem[BV82]{Barbasch-vogan1982classical}
D.~Barbasch and D.~A. Vogan.
\newblock Primitive ideals and orbital integrals in complex classical groups.
\newblock {\em Math. Ann.}, 259(2):153--199, 1982.

\bibitem[BX19]{BX}
Z.~Bai and X.~Xie.
\newblock Gelfand-{K}irillov dimensions of highest weight {H}arish-{C}handra
  modules for {$SU(p,q)$}.
\newblock {\em Int. Math. Res. Not. IMRN}, pages 4392--4418, 2019.

\bibitem[BZ17]{BZ}
L.~Barchini and R.~Zierau.
\newblock Characteristic cycles of highest weight {H}arish-{C}handra modules
  for {${\rm Sp}(2n,{\bf R})$}.
\newblock {\em Transform. Groups}, 22(3):591--630, 2017.

\bibitem[CIS88]{CIS}
D.~H. Collingwood, R.~S. Irving, and B.~Shelton.
\newblock Filtrations on generalized {V}erma modules for {H}ermitian symmetric
  pairs.
\newblock {\em J. Reine Angew. Math.}, 383:54--86, 1988.

\bibitem[Deo77]{Deo77}
V.~V. Deodhar.
\newblock Some characterizations of {B}ruhat ordering on a {C}oxeter group and
  determination of the relative {M}\"{o}bius function.
\newblock {\em Invent. Math.}, 39(2):187--198, 1977.

\bibitem[EHW83]{EHW}
T.~J. Enright, R.~Howe, and N.~Wallach.
\newblock A classification of unitary highest weight modules.
\newblock In {\em Representation theory of reductive groups ({P}ark {C}ity,
  {U}tah, 1982)}, volume~40 of {\em Progr. Math.}, pages 97--143. Birkh\"auser
  Boston, Boston, MA, 1983.

\bibitem[EW04]{EW}
T.~J. Enright and J.~F. Willenbring.
\newblock Hilbert series, {H}owe duality and branching for classical groups.
\newblock {\em Ann. of Math. (2)}, 159(1):337--375, 2004.

\bibitem[Gar90]{Garfinkle1}
D.~Garfinkle.
\newblock On the classification of primitive ideals for complex classical {L}ie
  algebras. {I}.
\newblock {\em Compositio Math.}, 75(2):135--169, 1990.

\bibitem[Gar93]{Garfinkle1993}
D.~Garfinkle.
\newblock The annihilators of irreducible {H}arish-{C}handra modules for {${\rm
  SU}(p,q)$} and other type {$A_{n-1}$} groups.
\newblock {\em Amer. J. Math.}, 115(2):305--369, 1993.

\bibitem[GK66]{Ge-Ki}
I.~M. Gelfand and A.~A. Kirillov.
\newblock Sur les corps li\'es aux alg\`ebres enveloppantes des alg\`ebres de
  {L}ie.
\newblock {\em Inst. Hautes \'Etudes Sci. Publ. Math.}, (31):5--19, 1966.

\bibitem[{Gre}74]{greene1974}
C.~{Greene}.
\newblock {An extension of Schensted's theorem}.
\newblock {\em {Adv. Math.}}, 14:254--265, 1974.

\bibitem[Hum78]{Hum78}
J.~E. Humphreys.
\newblock {\em Introduction to {L}ie algebras and representation theory},
  volume~9 of {\em Graduate Texts in Mathematics}.
\newblock Springer-Verlag, New York-Berlin, 1978.
\newblock Second printing, revised.

\bibitem[Hum90]{Hum90}
J.~E. Humphreys.
\newblock {\em Reflection groups and {C}oxeter groups}, volume~29 of {\em
  Cambridge Studies in Advanced Mathematics}.
\newblock Cambridge University Press, Cambridge, 1990.

\bibitem[Jos78]{Jo78}
A.~Joseph.
\newblock Gelfand-{K}irillov dimension for the annihilators of simple quotients
  of {V}erma modules.
\newblock {\em J. London Math. Soc. (2)}, 18(1):50--60, 1978.

\bibitem[Jos80a]{J80-1}
A.~Joseph.
\newblock Goldie rank in the enveloping algebra of a semisimple {L}ie algebra.
  {I},.
\newblock {\em J. Algebra}, 65:269--283, 1980.

\bibitem[Jos80b]{J80-2}
A.~Joseph.
\newblock Goldie rank in the enveloping algebra of a semisimple {L}ie algebra.
  {II},.
\newblock {\em J. Algebra}, 65:284--306, 1980.

\bibitem[Jos81]{J81-1}
A.~Joseph.
\newblock Goldie rank in the enveloping algebra of a semisimple {L}ie algebra.
  {III}.
\newblock {\em J. Algebra}, 73(2):295--326, 1981.

\bibitem[Jos84]{Jo84}
A.~Joseph.
\newblock On the variety of a highest weight module.
\newblock {\em J. Algebra}, 88(1):238--278, 1984.

\bibitem[Jos92]{Jo92}
A.~Joseph.
\newblock Annihilators and associated varieties of unitary highest weight
  modules.
\newblock {\em Ann. Sci. \'{E}cole Norm. Sup. (4)}, 25(1):1--45, 1992.

\bibitem[KL79]{KL}
D.~Kazhdan and G.~Lusztig.
\newblock Representations of {C}oxeter groups and {H}ecke algebras.
\newblock {\em Invent. Math.}, 53(2):165--184, 1979.

\bibitem[LM15]{LM}
H.~Y. Loke and J.-J. Ma.
\newblock Invariants and {$K$}-spectrums of local theta lifts.
\newblock {\em Compos. Math.}, 151(1):179--206, 2015.

\bibitem[Lus77]{lusztig1977symbol}
G.~Lusztig.
\newblock Irreducible representations of finite classical groups.
\newblock {\em Invent. Math.}, 43(2):125--175, 1977.

\bibitem[Lus79]{Lu79}
G.~Lusztig.
\newblock A class of irreducible representations of a {W}eyl group.
\newblock {\em Nederl. Akad. Wetensch. Indag. Math.}, 41(3):323--335, 1979.

\bibitem[Lus82]{Lusztig1982A-class-II}
G.~Lusztig.
\newblock A class of irreducible representations of a {W}eyl group. {II}.
\newblock {\em Nederl. Akad. Wetensch. Indag. Math.}, 44(2):219--226, 1982.

\bibitem[Lus84]{lusztig1984char}
G.~Lusztig.
\newblock {\em Characters of reductive groups over a finite field}, volume 107
  of {\em Annals of Mathematics Studies}.
\newblock Princeton University Press, Princeton, NJ, 1984.

\bibitem[Lus85a]{lusztig1985cellsI}
G.~Lusztig.
\newblock Cells in affine {W}eyl groups.
\newblock In {\em Algebraic groups and related topics ({K}yoto/{N}agoya,
  1983)}, volume~6 of {\em Adv. Stud. Pure Math.}, pages 255--287.
  North-Holland, Amsterdam, 1985.

\bibitem[Lus85b]{lusztig1985A_n}
G.~Lusztig.
\newblock The two-sided cells of the affine {W}eyl group of type
  {$\tilde{A}_n$}.
\newblock In {\em Infinite-dimensional groups with applications ({B}erkeley,
  {C}alif., 1984)}, volume~4 of {\em Math. Sci. Res. Inst. Publ.}, pages
  275--283. Springer, New York, 1985.

\bibitem[Lus03]{lusztig2003hecke}
G.~Lusztig.
\newblock {\em Hecke algebras with unequal parameters}, volume~18 of {\em CRM
  Monograph Series}.
\newblock American Mathematical Society, Providence, RI, 2003.

\bibitem[McG00]{Mc00}
W.~M. McGovern.
\newblock A triangularity result for associated varieties of highest weight
  modules.
\newblock {\em Comm. Algebra}, 28(4):1835--1843, 2000.

\bibitem[Mel93]{Mel}
A.~Melnikov.
\newblock Irreducibility of the associated varieties of simple highest weight
  modules in $\mathfrak{sl}(n)$.
\newblock {\em C. R. Acad. Sci. Paris S\'er. I Math.}, 316(1):53--57, 1993.

\bibitem[NOT01]{NOT}
K.~Nishiyama, H.~Ochiai, and K.~Taniguchi.
\newblock Bernstein degree and associated cycles of {H}arish-{C}handra
  modules---{H}ermitian symmetric case.
\newblock Number 273, pages 13--80. 2001.
\newblock Nilpotent orbits, associated cycles and Whittaker models for highest
  weight representations.

\bibitem[Sag01]{Sagan}
B.~E. Sagan.
\newblock {\em The symmetric group}, volume 203 of {\em Graduate Texts in
  Mathematics}.
\newblock Springer-Verlag, New York, second edition, 2001.
\newblock Representations, combinatorial algorithms, and symmetric functions.

\bibitem[Tan88]{Ta}
T.~Tanisaki.
\newblock Characteristic varieties of highest weight modules and primitive
  quotients.
\newblock In {\em Representations of {L}ie groups, {K}yoto, {H}iroshima, 1986},
  volume~14 of {\em Adv. Stud. Pure Math.}, pages 1--30. Academic Press,
  Boston, MA, 1988.

\bibitem[Tra01]{Tr-01}
P.~E. Trapa.
\newblock Annihilators and associated varieties of {$A_{ q}(\lambda)$} modules
  for {$U(p,q)$}.
\newblock {\em Compositio Math.}, 129(1):1--45, 2001.

\bibitem[TZ96]{tan-zhu}
E.-C Tan and C.-Bo Zhu.
\newblock Poincar\'{e} series of holomorphic representations.
\newblock {\em Indag. Math. (N.S.)}, 7(1):111--126, 1996.

\bibitem[{van}96]{Leeuwen-elementary}
M.~A.~A. {van Leeuwen}.
\newblock {The Robinson-Schensted and Sch\"utzenberger algorithms, an
  elementary approach}.
\newblock {\em {Electron. J. Comb.}}, 3(2), 1996.

\bibitem[Vog78]{Vo78}
D.~A. Vogan, Jr.
\newblock Gelfand-{K}irillov dimension for {H}arish-{C}handra modules.
\newblock {\em Invent. Math.}, 48(1):75--98, 1978.

\bibitem[Vog81a]{Vogan81book}
D.~A. Vogan, Jr.
\newblock {\em Representations of real reductive {L}ie groups}, volume~15 of
  {\em Progress in Mathematics}.
\newblock Birkh\"{a}user, Boston, Mass., 1981.

\bibitem[Vog81b]{Vogan-81}
D.~A. Vogan, Jr.
\newblock Singular unitary representations.
\newblock In {\em Noncommutative harmonic analysis and {L}ie groups
  ({M}arseille, 1980)}, volume 880 of {\em Lecture Notes in Math.}, pages
  506--535. Springer, Berlin-New York, 1981.

\bibitem[Vog91]{Vo91}
D.~A. Vogan, Jr.
\newblock Associated varieties and unipotent representations.
\newblock In {\em Harmonic analysis on reductive groups ({B}runswick, {ME},
  1989)}, volume 101 of {\em Progr. Math.}, pages 315--388. Birkh\"auser
  Boston, Boston, MA, 1991.

\bibitem[Wil15]{Wi}
G.~Williamson.
\newblock A reducible characteristic variety in type {$A$}.
\newblock In {\em Representations of reductive groups}, volume 312 of {\em
  Progr. Math.}, pages 517--532. Birkh\"{a}user/Springer, Cham, 2015.

\bibitem[Yam01]{Hir01}
H.~Yamashita.
\newblock Cayley transform and generalized {W}hittaker models for irreducible
  highest weight modules.
\newblock Number 273, pages 81--137. 2001.
\newblock Nilpotent orbits, associated cycles and Whittaker models for highest
  weight representations.

\bibitem[Zie18]{Zie18}
R.~Zierau.
\newblock Characteristic cycles of highest weight {H}arish-{C}handra modules.
\newblock {\em S\~{a}o Paulo J. Math. Sci.}, 12(2):389--410, 2018.

\end{thebibliography}

\end{document}